\documentclass[12pt]{article}
\usepackage{amssymb,amsmath,amscd, amsthm,verbatim,mathrsfs,graphicx}
\usepackage[english]{babel}
\usepackage[hidelinks]{hyperref}
\numberwithin{equation}{section}
\newtheorem{theorem}{Theorem}[section]
\newtheorem{lemma}{Lemma}[section]

\newtheorem{proposition}{Proposition}[section]

\renewcommand{\figurename}{\textbf{Fig.}}

\begin{document}
\baselineskip=14pt

\newcommand{\la}{\langle}
\newcommand{\ra}{\rangle}
\newcommand{\psp}{\vspace{0.4cm}}
\newcommand{\pse}{\vspace{0.2cm}}
\newcommand{\ptl}{\partial}
\newcommand{\dlt}{\delta}
\newcommand{\sgm}{\sigma}
\newcommand{\al}{\alpha}
\newcommand{\be}{\beta}
\newcommand{\G}{\Gamma}
\newcommand{\gm}{\gamma}
\newcommand{\vs}{\varsigma}
\newcommand{\Lmd}{\Lambda}
\newcommand{\lmd}{\lambda}
\newcommand{\td}{\tilde}
\newcommand{\vf}{\varphi}
\newcommand{\yt}{Y^{\nu}}
\newcommand{\wt}{\mbox{wt}\:}
\newcommand{\rd}{\mbox{Res}}
\newcommand{\ad}{\mbox{ad}}
\newcommand{\stl}{\stackrel}
\newcommand{\ol}{\overline}
\newcommand{\ul}{\underline}
\newcommand{\es}{\epsilon}
\newcommand{\dmd}{\diamond}
\newcommand{\clt}{\clubsuit}
\newcommand{\vt}{\vartheta}
\newcommand{\ves}{\varepsilon}
\newcommand{\dg}{\dagger}
\newcommand{\tr}{\mbox{Tr}}
\newcommand{\ga}{{\cal G}({\cal A})}
\newcommand{\hga}{\hat{\cal G}({\cal A})}
\newcommand{\Edo}{\mbox{End}\:}
\newcommand{\for}{\mbox{for}}
\newcommand{\kn}{\mbox{ker}}
\newcommand{\Dlt}{\Delta}
\newcommand{\rad}{\mbox{Rad}}
\newcommand{\rta}{\rightarrow}
\newcommand{\mbb}{\mathbb}
\newcommand{\lra}{\Longrightarrow}
\newcommand{\X}{{\cal X}}
\newcommand{\Y}{{\cal Y}}
\newcommand{\Z}{{\cal Z}}
\newcommand{\U}{{\cal U}}
\newcommand{\V}{{\cal V}}
\newcommand{\W}{{\cal W}}
\newcommand{\sta}{\theta}
\setlength{\unitlength}{3pt}
\newcommand{\msr}{\mathscr}
\newcommand{\wht}{\widehat}
\newcommand{\mfk}{\mathfrak}
\renewcommand{\figurename}{\textbf{Fig.}}

\begin{center}{\large \bf Non-canonical Oscillator Representations of Orthogonal
\\ \pse Lie Algebras and Pfaffian Varieties} \footnote {2010 Mathematical Subject
Classification. Primary 17B10, 22E47; Secondary 14M12, 35C05.}
\end{center}

\vspace{0.2cm}

\begin{center}{\large Hengjia Zhang}\end{center}

\begin{center}{
Chern Institute of Mathematics and LPMC,  \\ Nankai University, Tianjin 300071, China}\end{center}

\begin{center}
Corresponding author: \href{mailto:9820250151@nankai.edu.cn}{9820250151@nankai.edu.cn}\\
ORCID: \href{https://orcid.org/0000-0002-8652-8353}{0000-0002-8652-8353}
\end{center}

\begin {abstract}
\quad
Associated varieties are fundamental geometric invariants in the study of infinite-dimensional representations of semisimple Lie algebras. Luo and Xu (2013) constructed a large family of infinite-dimensional irreducible representations of orthogonal Lie algebras by applying Fourier transforms to the natural oscillator representations. In this paper, we prove that the associated varieties of these irreducible representations are isomorphic to explicitly described coordinate linear sections of Pfaffian varieties. The modules are realized as spaces of polynomial solutions of transformed Laplace equations. We construct filtrations from weighted polynomial degrees and determine the annihilator ideals of the corresponding associated graded modules. The defining equations consist of linear coordinate equations, minors and quadratic Pfaffian relations. The even and odd orthogonal Lie algebras are treated separately. The associated varieties are viewed as reduced affine schemes over a field of characteristic zero. In each irreducible family with fixed Fourier parameters, the associated variety is independent of the grading parameter. When the two Fourier parameters coincide, the projectivizations in the even case are Grassmannians.\psp

\noindent{\it Keywords}:\hspace{0.3cm} orthogonal Lie algebra; oscillator
 representation; associated variety; irreducible module;  Pfaffian variety; Grassmannian.

\end{abstract}

\section {Introduction}
Associated varieties are geometric invariants of finitely generated modules over universal enveloping algebras. Their relation to primitive ideals and nilpotent orbits has been studied by Borho and Brylinski, Joseph and Vogan; see \cite{BB,Ja1,Ja2,Ja3,Vd3}. Explicit computations for highest weight modules are given, for example, in \cite{BHXZ,BXX,BMW}.

By applying Fourier transformations to the canonical bosonic oscillator representations of orthogonal Lie algebras, Luo and Xu \cite{LX} constructed a large family of irreducible representations of orthogonal algebras realized on spaces of homogeneous solutions to the Laplace equation.
In this paper, we give an explicit and complete description of the associated varieties for this family of representations.
Our main results show that these associated varieties are explicitly realized as coordinate linear sections of Pfaffian varieties. When $n_1=n_2$, their projectivizations in the even case are Grassmannians.
This work continues the line of investigation initiated in \cite{ZX1}, where we established a correspondence between a certain class of irreducible $\mathfrak{sl}(n)$-modules and determinantal varieties. Here we consider orthogonal Lie algebras.
Below, we provide a more detailed technical introduction to our work.\pse

Let $\mathbb{F}$ be a field of characteristic zero, and let $n>1$ be an integer. Denote by $\msr{A} = \mathbb{F}[x_1, x_2, \dots, x_n, y_1, y_2, \dots, y_n]$ the polynomial algebra in $2n$ variables. Let $E_{r,s}$ be the square matrix with 1 as its $(r,s)$-entry and 0 elsewhere. The split even orthogonal Lie algebra is:
\begin{equation*}
{\mfk o}(2n)=\sum_{1\leq p<q\leq
n}[\mbb{F}(E_{p,n+q}-E_{q,n+p})+\mbb{F}(E_{n+p,q}-E_{n+q,p})]+\sum_{i,j=1}^{n}\mbb{F}(E_{i,j}-E_{n+j,n+i})\end{equation*}
with the commutator as its Lie bracket. Let
\begin{equation*}
H= \sum_{i =1}^{n}\mbb{F}(E_{i,i}-E_{n+i,n+i})\end{equation*} be a Cartan subalgebra and define $\{ \es_i \mid 1\leq i\leq n\} \subset H^*$ by
\begin{equation*}\es_i(E_{j,j}-E_{n+j,n+j})=\delta_{i, j}\end{equation*}

 For any two integers $p\leq q$, we denote $\ol{p,q}=\{p,p+1,\dots,q\}$. The canonical bosonic oscillator representation $\pi$ of $\mathfrak{o}(2n)$ on $\msr{A}$ is given as follows:
\begin{align*} \begin{split}
\pi(E_{i,j}-E_{n+j,n+i})=x_i\ptl_{x_j}-y_j\ptl_{y_i},
\pi(E_{n+i,j}-E_{n+j,i})=y_i\ptl_{x_j}-y_j\ptl_{x_i},\\
\pi(E_{i,n+j}-E_{ j,n+i})=x_i\ptl_{y_j}-x_j\ptl_{y_i}
\qquad\text{for}\;\;i,j\in  \ol{1,n}  .\end{split}\end{align*}

With respect to the above representation, the Laplace operator  is defined as
\begin{equation*}\Dlt=\sum_{i=1}^n\ptl_{x_i}\ptl_{y_i}.\end{equation*}
Denote by $\mbb N$ the set of nonnegative integers. For $\ell_1,\ell_2\in \mbb N$, we denote by $\msr A_{\ell_1,\ell_2}$ the subspace of homogeneous polynomials of degree $\ell_1$ in $\{x_1,x_2,...,x_n\}$ and the degree $\ell_2$ in $\{y_1,y_2,...,y_n\}$, and set
\begin{align*}
\msr A_{k}=\sum_{\ell=0}^k \msr A_{\ell,k-\ell},\,\, \msr H_{k} =\{f\in {\msr A}_{k}\mid
\Dlt(f)=0\} \qquad \mbox{for }\, k\in \mbb N.
\end{align*}
Then $\msr{H}_k$ is a finite-dimensional irreducible $\mathfrak{o}(2n)$-module, and $x_1^k$ is a highest-weight vector with highest weight $k \es_1$ ( cf.\cite{Xx}).

 Fix $n_1,n_2\in\ol{1,n}$ with $n_1\leq n_2$. Throughout the paper, put
\begin{equation}\label{eq:Jsets}
J_1=\ol{1,n_1},\qquad J_2=\ol{n_1+1,n_2},\qquad J_3=\ol{n_2+1,n}.
\end{equation}
An interval with its lower endpoint greater than its upper endpoint is empty.
Applying the Fourier transformations to the operators:
\begin{equation} \label{1.7}
\ptl_{x_r}\mapsto -x_r,\;
 x_r\mapsto\ptl_{x_r},\;\ptl_{y_s}\mapsto -y_s,\;
 y_s\mapsto\ptl_{y_s}\qquad \text{for}\; r\in\ol{1,n_1},\;s\in\ol{n_2+1,n},\end{equation}
All other multiplication and differentiation operators are fixed. We obtain a new representation $\td{\pi}$ on $\msr A$.
Under the changes in (\ref{1.7}), the Laplace operator becomes (up to a sign)
\begin{equation} \label{1.8}
\td\Dlt=\sum_{i=1}^{n_1}x_i\ptl_{y_i}-\sum_{r=n_1+1}^{n_2}\ptl_{x_r}\ptl_{y_r}+\sum_{s=n_2+1}^n
y_s\ptl_{x_s}. \end{equation}
In particular, $\td\Dlt= \sum\limits_{i=1}^{n_1}x_i\ptl_{y_i} +\sum\limits_{s=n_2+1}^n
y_s\ptl_{x_s}$ when $n_1=n_2$.

Take the degree of $\{x_1,...,x_{n_1},y_{n_2+1},...,y_n\}$ as $-1$ and that of $\{x_{n_1+1},...,x_n,y_1...,y_{n_2}\}$
as 1. Denote by $\mbb Z$ the ring of integers. For $\ell \in \mbb Z$, we denote by $\msr A_{\la\ell \ra}$ the subspace of homogeneous polynomials of degree $\ell $ in $\{x_1,x_2,...,x_n, y_1,y_2,...,y_n\}$.
 Each $\msr A_{\la\ell\ra}$ is $\widetilde\pi$-invariant. Then $\msr A=\bigoplus_{\ell \in\mbb Z}\msr A_{\la\ell \ra}$ is a $\mbb Z $-graded algebra. Define
\begin{equation*}{\msr H}_{\la\ell \ra}=\{f\in {\msr A}_{\la\ell \ra}\mid
\td{\Dlt}(f)=0\}.\end{equation*}Recall the following structural result from \cite{LX}: \pse

{\bf Proposition 1}\quad {\it  For any integer $\ell$ satisfying
$\ell \leq n_1-n_2+1-\delta_{n_1,n_2} $, ${\msr H}_{\la\ell \ra}$ is an infinite-dimensional irreducible $\mfk{o}(2n)$-weight module. The module ${\msr H}_{\la\ell \ra}$ under the assumption is of highest-weight type only if $n_2=n$. When $n_2=n$, $x_{n_1}^{-\ell}$ is a highest-weight vector with weight $\ell  \es_{n_1}- \sum_{i=1}^{n_1} \es_i$.  } \psp

Let $\mfk g$ be a finite-dimensional semisimple Lie algebra and let $U(\mfk g)$ be its universal enveloping algebra. Set $U_0(\mfk g)=\mbb F$ and, for a positive integer $m$, define $U_m(\mfk g)=\mbb F+\sum_{r=1}^m\mfk g^r$. Then $\{U_k(\mfk g)\mid k\in\mbb N\}$ is a filtration of $U(\mfk g)$. Suppose $M$ is a finitely generated $U(\mathfrak{g})$-module and $M_0$ is a finite-dimensional subspace of $M$ such that
\begin{equation}\label{1.10}
(U(\mfk g))(M_0)=M.\end{equation}
Setting $M_k =(U_k( \mathfrak{g}))(M_0)$, we obtain a filtration $\{M_k\mid k\in\mbb N\}$ of $M$.

The associated graded module is defined as
\begin{equation}\label{1.11}
\ol{M}=\mbox{gr}( M; M_0) = \bigoplus_{k = 0}^\infty \ol{M_k}\quad \text{with}\quad \ol{M}_k=M_k / M_{k-1},
\end{equation}
where $M_{-1}=0$. It is a finitely generated graded $S( \mathfrak{g})$-module, since the graded algebra $\mbox{gr}(  U( \mathfrak{g}))$ is canonically isomorphic to the symmetric tensor algebra $S( \mathfrak{g})$ by the PBW theorem. Here we identify $S( \mathfrak{g})$ with $\mathbb{F}[\mathfrak{g}^*]$, the polynomial ring over $\mathfrak{g}^*$, canonically; the Killing form is needed only to identify $\mathfrak g^*$ with $\mathfrak g$.

The annihilator of $\ol{M}$ in $S( \mathfrak{g})$ is
\begin{equation*}
\operatorname{Ann}_{S( \mfk g)} (\ol{M})=\{ \xi \in  S( \mathfrak{g})\mid \xi(v)=0,\; \forall\; v \in \ol{M} \},\end{equation*}
which is a graded ideal of $S( \mathfrak{g})$. It defines the associated variety in the dual space $\mathfrak{g}^*  $:
\begin{equation*}
{\msr V}(M)=\{ f \in \mathfrak{g}^*\mid u(f) =0,\;\forall\; u\in \operatorname{Ann}_{S(\mathfrak{g})}(\ol{M})\}.\end{equation*}
We use the reduced affine scheme
\[
\msr V(M)=\operatorname{Spec}\bigl(S(\mfk g)/\sqrt{\operatorname{Ann}_{S(\mfk g)}(\overline M)}\bigr).
\]
Its projectivization is denoted by $\mathbb P\msr V(M)$ and is the corresponding $\operatorname{Proj}$. These schemes are independent of the choice of $M_0$ satisfying \eqref{1.10}. All intersections and isomorphisms of varieties below are taken with their reduced scheme structures over $\mathbb F$. In particular, an affine cone is distinguished from its projectivization.

For $m \in \mbb{N}$, let $\mathrm{Skew}(m)$ denote the space of $m \times m$ skew-symmetric matrices over $\mbb{F}$. For $r\in\mbb N$, the Pfaffian variety is defined by
\begin{align*} 
{\bf Pf}(m, 2r) = \{ X \in \mathrm{Skew}(m) \mid \mathrm{rank}(X) \leq 2r \}.
\end{align*} 
For a subset $S \subseteq  \{1,2, \dots, 2n  \} \times \{  1,2, \dots, 2n  \} $, we define the coordinate subspace $\msr{V}_1(S)$ by the vanishing of the corresponding entries:
\begin{align*} 
\msr{V}_1(S) = \{ X = (X_{ij}) \in \mathrm{Skew}(2n) \mid X_{ij} = 0 \text{ for } (i, j) \in S \}.
\end{align*} 
For any subvariety $Y \subseteq \mathrm{Skew}(2n)$, we refer to $Y \cap \msr{V}_1(S)$ as a coordinate linear section of $Y$ (\cite{BV88,CW19}).  

For $r, m\in \mbb{N}$, we denote by $\mathbf{Gr}(r,m)$ the Grassmannian parameterizing the set of all $r$-dimensional linear subspaces of an $m$-dimensional vector space over $\mbb{F}$.

 Define the set
\begin{align}\label{eq:S0}\begin{split}
S_0=&(\ol{n_2+1,n+n_1}\times \ol{1,2n})\cup (\ol{n_1+1,n+n_2}\times \ol{n_1+1,n+n_2})\\
 &\cup (\ol{1,2n} \times \ol{n_2+1,n+n_1}).
\end{split}
\end{align} 

The following is our first main result.\pse

	{\bf Theorem 1}\quad {\it Let $\mathfrak{g} = \mathfrak{o}(2n)$ and let $\msr{H}_{\langle \ell \rangle}$ be defined as above. If $\ell \leq 0$,
\begin{align*}
\msr{V}(\msr{H}_{\la\ell \ra}) \cong  \msr V_1(S_0)  \cap {\bf Pf}(2n, 2).
\end{align*}
In particular, when $\ell \leq 0$, $n_1 = n_2$, and $n \geq 3$, we have
\begin{align*} 
\mathbb P\msr V(\msr H_{\la\ell\ra}) \cong \mathbf{Gr}(2,n)
\end{align*}
as projective varieties.}\psp

Theorem 1 provides a complete geometric characterization of the associated varieties for the irreducible $\mathfrak{o}(2n)$-modules $\msr{H}_{\langle \ell \rangle}$ introduced in Proposition 1. \pse

Now we consider the odd case. The split odd orthogonal Lie algebra
\begin{align*}
\mfk{o}(2n+1)=\mfk{o}(2n)+\sum_{i=1}^n[\mbb F(E_{0,i}-E_{n+i,0})+\mbb F(E_{0,n+i}-E_{i,0})].
 \end{align*}
Let $\msr{A}' = \mathbb{F}[x_0, x_1, \dots, x_n, y_1, \dots, y_n]$. The canonical oscillator representation $\pi'$ of $\mathfrak{o}(2n+1)$ on $\msr{A}'$ is given by
\begin{align} \label{b1.23}
\begin{split}
\pi'(E_{i,j}-E_{n+j,n+i})=x_i\ptl_{x_j}-y_j\ptl_{y_i},
\pi'(E_{n+i,j}-E_{n+j,i})=y_i\ptl_{x_j}-y_j\ptl_{x_i},\\
\pi'(E_{i,n+j}-E_{ j,n+i})=x_i\ptl_{y_j}-x_j\ptl_{y_i},
\pi'(E_{0,i}-E_{n+i,0})= x_0\ptl_{x_i}-y_i\ptl_{x_0},\\
\pi'(E_{0,n+i}-E_{i,0})= x_0\ptl_{y_i}-x_i\ptl_{x_0}
\qquad\text{for}\;\;i,j\in\ol{1,n}.\end{split}
 \end{align}
Applying the Fourier transformation defined by (\ref{1.7}) to the operators in (\ref{b1.23}), we obtain a new representation $\widetilde{\pi}'$ on $\msr{A}'$. The corresponding transformed Laplace operator is
\begin{equation*}\td\Dlt'=\ptl_{x_0}^2- 2 \sum_{i=1}^{n_1}x_i\ptl_{y_i} + 2 \sum_{r=n_1+1}^{n_2}\ptl_{x_r}\ptl_{y_r}- 2 \sum_{s=n_2+1}^n y_s\ptl_{x_s}. \end{equation*}
Set
\begin{equation*}{\msr A}_{\la\ell \ra}'=\sum_{i=0}^{\infty} {\msr A}_{\la\ell-i \ra} x_0^{i},\qquad {\msr H}_{\la\ell \ra}'=\{f\in {\msr A}_{\la\ell \ra}'\mid
\td{\Dlt}'(f)=0\}.\end{equation*}The following is a result from \cite{LX}: \pse

{\bf Proposition 2}\quad {\it  For any $\ell \in \mbb Z$, the space $\msr H_{\la \ell \ra}'$ is an infinite-dimensional irreducible $\mfk{o}(2n+1)$-weight module with finite-dimensional weight subspaces, and it is of highest-weight type only if $n_2 = n$. } \pse 
 
Set
\begin{align*} 
 S_0'=S_0 \cup (\{0\} \times \ol{n_1+1,n+n_2}) \cup (  \ol{n_1+1,n+n_2} \times \{0\} ) .\end{align*} 

The notation $\msr V_1(S)$ is used in $\mathrm{Skew}(2n+1)$ in the same way, with indices $0,\ldots,2n$. The following is our second main result. \pse

{\bf Theorem 2}\quad {\it For $\mfk g=\mathfrak{o}(2n+1)$ and ${\msr H}_{\la\ell \ra}'$ defined as above,
\begin{align*}
\begin{split}
\msr V({\msr H}'_{\la\ell \ra})\cong    \msr V_1(S_0') \cap {\bf Pf}(2n+1,2 ).
\end{split}\end{align*}   }

Thus, for fixed $n,n_1,n_2$, the associated variety in Theorem 2 is independent of $\ell$. The coordinate section still depends on $n_1,n_2$.

Sections 2--5 give the weighted degree lemmas, the filtration and the defining equations in the even case. Section 6 treats the odd case separately. The longer computations used in the filtration and the relation arguments are given in the appendices.

\section{Weighted Degree Functions and Filtrations}
\
Let $\mathscr{C} = \mathbb{F}[x_1, \ldots, x_n]$ be the polynomial algebra in $n$ variables over a field $\mathbb{F}$, and let $\mathfrak{g}$ be a finite-dimensional Lie algebra. Consider a $U(\mathfrak g)$-module $M$ contained in $\mathscr C$.

In this section, we introduce the concept of a weighted degree function. Under suitable conditions on $M$, we can use such a function to describe a filtration of the form $U_k(\mfk{g})(M_0)$, where $M_0$ is a finite-dimensional subspace satisfying \eqref{1.10} (Lemma~\ref{lem:2.1}). Lemma~\ref{lem:2.1} separates the filtration argument into a degree estimate and a lowering condition. Lemma~\ref{lem:2.2} records the strict increase of the maximal degree under multiplication. These observations explain how highest degree terms can be used to find the defining equations. The results of this section will be applied in the subsequent sections to both the $\mfk{o}(2n)$ and $\mfk{o}(2n+1)$ cases. \pse

For any multi-index $\alpha = (\alpha_1, \alpha_2, \ldots, \alpha_n) \in \mathbb{N}^n$, we write the corresponding monomial as
\begin{equation*}
x^{\alpha} = x_1^{\alpha_1} x_2^{\alpha_2} \cdots x_n^{\alpha_n}.
\end{equation*}

Let $\Gamma$ be a subgroup of $(\mathbb{R},+)$.
A function $\mathfrak{d} : \msr{C} \to \Gamma \cup \{ -\infty\}$ is called a {\it weighted degree function} (\cite{KS}) if it satisfies the following conditions for $f,g  \in \msr{C}$:
\begin{align}\label{2.2}\begin{split}
&(1)\;{\mfk d} \left(f\cdot g\right) ={\mfk d} (f) + {\mfk d} (g)\ ;    \\\
&(2)\;{\mfk d} \left(  0  \right) = -\infty ; \\
&(3)\;{\mfk d} \left( c \right) =0  \quad \text{for all } c \in \mathbb{F}\setminus\{0\}; \\
&(4)\;\mathfrak d\left(\sum_{i=1}^r f_i\right)=\max_{1\leq i\leq r}\mathfrak d(f_i),\\
& \;\;\text{where the $f_i$ are nonzero monomial terms with distinct exponents. }
\end{split}\end{align}
Equivalently, any weighted degree function $\mathfrak{d}$ is determined by a weight vector $(r_1, \dots, r_n) \in \Gamma^n$ such that for any monomial $\lambda x^{\alpha}$ with $\lambda \in \mathbb{F}\setminus \{0\}$ and multi-index $\alpha \in \mathbb{N}^n$,
\begin{align}\label{2.3}
\mathfrak{d}(\lambda x^{\alpha}) = \sum_{i=1}^n \alpha_i r_i,
\end{align}
and conditions (2), (4) above hold.

Fix a weighted degree $\mfk d_1$. For each integer $k \in \mathbb{Z}$, we define the subspace
\begin{align*}
 N(k )=\operatorname{Span}\big\{ x^\alpha \in \msr C \mid \,k-1<{\mfk d}_1(x^\alpha) \leq k \big\},
 \end{align*}
which is a subspace of $\mathscr{C}$.
Given $f \in \mathscr{C}$ and a weighted degree function $\mathfrak{d}$, a {\it $\mfk{d}$-$\mbb Z$-decomposition} of $f$ is an expression $f = \sum_{l \in \mathbb{Z}} f_l$, where each $f_l \in N(l)$.

Let $\{u_1,\dots,u_d\}$ be a basis of $\mathfrak{g}$. Define the index set
\begin{align}\label{2.4}
I_0=\big\{ i \in \ol{1,d} \mid \exists p_i\in\msr C\;\;\forall f\in M,\quad u_i(f)=f\cdot p_i \big\},
 \end{align}and let $P_0= \{ p_i\in \msr C \mid \;u_i(f)=f\cdot p_i ,\; i\in I_0,\;f\in M \}$  be the corresponding set of multipliers.

 Choose a subset $I_1 \subseteq I_0$, and define the corresponding set $P_1=\{ p_i\in \msr C \mid \; i\in I_1 \} $ (where each $p_i$ is taken from the family defining $P_0$).
Suppose a weighted degree function $\mathfrak{d}_1$ satisfies, for all $f \in M$:
\begin{align}  \label{2.5}
    \text{  $\mathfrak{d}_1(u_i(f)) \leq 1 + \mathfrak{d}_1(f)$ for all $i \notin I_1$}.
 \end{align}
Let $\msr C_0$ be a $\mfk d_1$-graded subspace of $\msr C$, stable under multiplication by $P_1$, and let $T:\msr C_0\to M$ be a degree-preserving linear isomorphism  satisfying the following conditions for every $f \in \msr C_0$:
\begin{align}
\label{2.6}   & \text{(1) $T(N(l)\cap  \msr C_0) =N(l) \cap M$\; for each $l\in \mbb Z$;}\\
   &   \text{(2) $T(f\cdot p)=T(f)\cdot p$\;  for all $p\in P_1$,}\label{2.7}
 \end{align}

Fix $k_0\in \mbb Z$. For $k\in\mathbb Z$, define $TN_{k,k_0}$ and $TN(k,k_0)$ below; define $V_{k,k_0}$ only for $k\geq0$:
\begin{align}
TN_{k,k_0}&=\operatorname{Span}\big\{T(f)  \mid f \in \msr C_0 ,\,{\mfk d}_1(f)\leq k +k_0 \big\},\notag\\
TN(k,k_0)&=\operatorname{Span}\big\{T(f)  \mid f=\sum_i f_i \in \msr C_0 \text{, where $f_i$ are monomials},\notag\\
&\qquad\qquad\,k +k_0-1<{\mfk d}_1(f_i)\leq k +k_0 \text{ for each } i\big\},\notag\\
 \label{2.11}
V_{k,k_0}&=\operatorname{Span}\big\{f_i \cdot g_i \mid f_i\in TN_{i,k_0},\,g_i \in P_1^{k-i},\,i \in\ol{0,k}  \big\},
 \end{align}where $P_1^k$ denotes the set of all products of $k$ elements from $P_1$ (with the convention $P_1^0 = \{1\}$).
Then, for any $r \in \ol{0,k}$, we have the inclusion\begin{align}\label{2.12}
 V_{r,k_0+k-r}\subseteq V_{k,k_0}  .
 \end{align}

We have the following lemma.
\begin{lemma}\label{lem:2.1}
For all $k_0\in \mbb Z$ and $k \in \mathbb{N}$, we have $U_k(\mathfrak{g}) ( V_{0,k_0} )\subseteq V_{k,k_0}$. Moreover, equality holds for all $k \in \mathbb{N}$ (i.e., $U_k(\mathfrak{g}) ( V_{0,k_0} ) = V_{k,k_0}$) if and only if
\begin{align} \label{2.13}
TN(k,k_0)  \subseteq \sum_{i=1}^k U_{i }(\mathfrak{g}) ( V_{k-i ,k_0} )
 \end{align}
 for all $k \geq1$.
\end{lemma}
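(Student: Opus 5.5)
The plan is to prove the inclusion $U_k(\mfk g)(V_{0,k_0})\subseteq V_{k,k_0}$ by induction on $k$, and then to obtain the equivalence from the same induction together with a decomposition of $V_{k,k_0}$ into a ``new top piece'' $TN(k,k_0)$ and pieces already generated at earlier stages.

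\textbf{The inclusion.} For $k=0$ both sides equal $V_{0,k_0}=TN_{0,k_0}$, since $U_0(\mfk g)=\mbb F$. Since $U_k(\mfk g)=U_{k-1}(\mfk g)+\mfk g\,U_{k-1}(\mfk g)$, the inductive step reduces to showing $u_j(V_{k-1,k_0})\subseteq V_{k,k_0}$ for every basis element $u_j$. A spanning element of $V_{k-1,k_0}$ has the form $T(f)\,g$ with $\mfk d_1(f)\le i+k_0$ and $g\in P_1^{k-1-i}$. By (\ref{2.7}), $T(f)g=T(fg)$ with $fg\in\msr C_0$, because $\msr C_0$ is stable under $P_1$; note that $T(fg)\in M$.
\begin{itemize}
\item If $j\in I_1$, then $u_j(T(fg))=T(f)\,g\,p_j$ with $gp_j\in P_1^{k-i}$. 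This lies in $V_{k,k_0}$ by definition.
\item If $j\in I_0\setminus I_1$, or $j\notin I_0$, apply (\ref{2.5}): $\mfk d_1(u_j(T(fg)))\le 1+\mfk d_1(T(fg))$.
\end{itemize}
In the second case one needs $T$ to preserve $\mfk d_1$ and $M$ to be $\mfk d_1$-graded compatibly, which is what (\ref{2.6}) supplies. Write $h=u_j(T(fg))\in M$ and pull it back by $T^{-1}$, which respects the $N(l)$ pieces. Using (\ref{2.6}) on each $N(l)$-component, $T^{-1}(h)$ has degree at most $1+\mfk d_1(fg)$. The obstacle is that $fg$ has degree $\mfk d_1(f)+\mfk d_1(g)$, and the degrees of the $p$'s are not controlled. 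The cleanest fix is to use the operator identity $u_j\circ T(\cdot\, g)$ only when $g=1$, that is, to induct on the form $U_k(\mfk g)(V_{0,k_0})$ written as $u_{j_1}\cdots u_{j_k}$ applied to $TN_{0,k_0}$. Then we track each word: the letters in $I_1$ multiply by a $p$, and these commute past later letters only up to lower terms. Reordering so that the $I_1$-letters act last shows that the word lies in $\sum_i TN_{i,k_0}P_1^{k-i}$. The commutators $[u_a,u_b]\in\mfk g$ only shorten words, so they are absorbed by induction. Concretely, I would prove the stronger statement that any element of $\mfk g\cdot V_{k-1,k_0}$ lies in $V_{k,k_0}$ using (\ref{2.12}). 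Here $T(f)g$ with $g\in P_1^{k-1-i}$ is the action of $k-1-i$ elements of $I_1$ on $T(f)$. By induction these are rewritten so that a non-$I_1$ letter $u_j$ hits $T(f)$ first, giving degree $\le i+1+k_0$, that is, an element of $TN_{i+1,k_0}$. The remaining $I_1$-letters then multiply. I expect this bookkeeping, which relies on (\ref{2.12}) and $V_{r,k_0+k-r}\subseteq V_{k,k_0}$, to be the main obstacle.

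\textbf{The equivalence.} Observe the decomposition
$$V_{k,k_0}=TN(k,k_0)+\sum_{i=1}^k V_{k-i,k_0}P_1^{i}+V_{k-1,k_0}.$$
It holds because $TN_{k,k_0}=TN(k,k_0)+TN_{k-1,k_0}$ by (\ref{2.6}), using integrality of the $N(l)$ decomposition.
\begin{itemize}
\item $V_{k-i,k_0}P_1^i\subseteq U_i(\mfk g)(V_{k-i,k_0})$, since multiplication by elements of $P_1$ is the action of $u_j$ with $j\in I_1$.
\item $V_{k-1,k_0}\subseteq U_1(\mfk g)V_{k-1,k_0}$ trivially.
\end{itemize}
If (\ref{2.13}) holds for all $k$, then $V_{k,k_0}\subseteq\sum_{i}U_i(\mfk g)V_{k-i,k_0}$. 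By induction $V_{k-i,k_0}=U_{k-i}(\mfk g)V_{0,k_0}$, so $V_{k,k_0}\subseteq U_k(\mfk g)V_{0,k_0}$, and equality follows from the inclusion already proved. Conversely, if equality holds, then $TN(k,k_0)\subseteq V_{k,k_0}=U_1(\mfk g)U_{k-1}(\mfk g)V_{0,k_0}=U_1(\mfk g)V_{k-1,k_0}$, which is the $i=1$ term of (\ref{2.13}).
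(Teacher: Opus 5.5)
Your proposal is correct and is essentially the paper's proof. For the inclusion, your "concrete" plan matches the paper's. You commute the non-$I_1$ generator past the $I_1$-multiplications so that it acts on $T(f)$ first, and absorb the commutator terms by induction together with \eqref{2.12}. This requires running the induction for all $k_0$ simultaneously, since those terms lie in $U_r(\mathfrak g)(V_{0,k_0+i})$. For the equivalence, you split off the new piece $TN(k,k_0)$ as the paper does.

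The remaining differences are cosmetic. You invoke \eqref{2.13} only at the top level in each inductive step, and you use the $i=1$ term rather than the $i=k$ term for the converse. Your first sketch is a valid shortcut that avoids the shift in $k_0$ altogether: reorder each length-$k$ word modulo $U_{k-1}(\mathfrak g)$ so that all $I_1$-letters act last.
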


\begin{proof}
We prove $U_k(\mathfrak{g})(V_{0,k_0}) \subseteq V_{k,k_0}$ for any $k \in \mathbb{N}$ by induction on $k$. The case $k=0$ is trivial.

Assume inductively that $U_i(\mathfrak{g}) ( V_{0,k_0} )\subseteq V_{i,k_0}$ for all $i\leq k-1$ and every integer $k_0$. Consider the step $k$. Observe that
\begin{align*}
 U_k\left(\mfk g \right)(V_{0,k_0})=(\mathbb F+\mfk g)(U_{k-1}(\mfk g)(V_{0,k_0}))\subseteq V_{k-1,k_0}+\mfk g(V_{k-1,k_0}).
\end{align*}
Let $ f \in TN_{k-1,k_0}$.
For $i  \in  \ol{1,d} \setminus I_1$, Inequality (\ref{2.5}) gives ${\mfk d}_1 \left( u_i(f) \right)\leq {\mfk d}_1 \left( f \right)+1\leq k+k_0$.
By (\ref{2.6}) and  the fact that  \begin{align*} u_i(f) \in M=\bigcup_{l=0}^{\infty}TN_{l,k_0},\end{align*} we obtain  $u_i(f) \in TN_{k,k_0}$.
For $i\in  I_1$, (\ref{2.4}) and (\ref{2.11}) yield
\begin{equation*}
u_i(f) \in TN_{k-1,k_0} \cdot P_1 \subset V_{k,k_0}.\end{equation*}

Now fix an element $g \in TN_{k-r-1,k_0}\cdot  P_1^r$ for some $r\in\ol{1,k-1}$.
If $i \in I_1$, we have
\begin{equation*}u_i(g)\in TN_{k-r-1,k_0}\cdot P_1^{r+1}  \subseteq V_{k,k_0}.\end{equation*}
Now suppose $i\in \ol{1,d} \setminus I_1$.
By linearity and (\ref{2.4}), it suffices to consider $g$ of the form
\begin{equation*}g=u_{i_1}\cdots u_{i_r }(g')\end{equation*}
where $g'\in TN_{k-r-1,k_0}\subset V_{k-r-1,k_0 }$ and $ i_1, ..., i_r \in I_1$.
 Applying $u_i$ and commuting $u_i$ past the factors, we obtain
 \begin{eqnarray*}
u_i(g)&=&\sum_{s=1}^r (u_{i_1 }\cdots u_{i_{s-1} }[u_{i},u_{i_s}]
  u_{i_{s+1} }\cdots  u_{i_r }(g')) +u_{i_1}\cdots u_{i_r}u_{i }(g')\\ &\in& U_{r }(\mathfrak{g}) ( TN_{k-r-1,k_0})+ u_i(TN_{k-r-1,k_0})\cdot P_1^r
\\ &\in& U_{r }(\mathfrak{g}) ( V_{0,k_0+k-r-1})+ TN_{k-r,k_0}\cdot P_1^r
\\ &\in&   V_{r ,k_0+k-r-1} + TN_{k-r,k_0}\cdot P_1^r
\\ &\in&   V_{k-1,k_0  } + TN_{k-r,k_0}\cdot P_1^r
\subseteq V_{k,k_0}\end{eqnarray*}
by (\ref{2.12}).
Thus, in all cases we have $u_i(g) \in V_{k,k_0}$, which completes the proof of the inclusion $ U_k(\mathfrak{g}) ( V_{0,k_0} )\subset V_{k,k_0}$.
\pse

Assume that \begin{align*}
TN(k,k_0)  \subseteq \sum_{i=1}^k U_{i }(\mathfrak{g}) ( V_{k-i ,k_0} )
 \end{align*} holds for all integers  $k\geq 1$. We prove $U_k(\mathfrak{g}) ( V_{0,k_0} )= V_{k,k_0}$ by induction on $k$.
For the inductive step, suppose $U_i(\mathfrak{g}) ( V_{0,k_0} )= V_{i,k_0}$ for all $i \leq k-1$.
Observe first that
\begin{align*}
V_{k,k_0}
&=\sum_{i=0}^k TN_{k-i,k_0}\cdot P_1^i\\
&=\sum_{i=0}^k\left(TN_{0,k_0}\cdot P_1^i+
\sum_{j=1}^{k-i}TN(j,k_0)\cdot P_1^i\right).
\end{align*}
The first term in each parenthesis is contained in $U_i(\mfk g)(V_{0,k_0})$. For $1\leq j\leq k-i$, the lowering condition gives
\begin{align*}
TN(j,k_0)\cdot P_1^i
&\subseteq\sum_{s=1}^{j}U_i(\mfk g)U_s(\mfk g)(V_{j-s,k_0})\\
&\subseteq U_{i+j}(\mfk g)(V_{0,k_0})
\subseteq U_k(\mfk g)(V_{0,k_0}),
\end{align*}
where the second inclusion follows from the inductive hypothesis, since $j-s\leq k-1$. Hence $V_{k,k_0}\subseteq U_k(\mfk g)(V_{0,k_0})$, proving the reverse inclusion.

Conversely, if $V_{k,k_0}=U_k(\mfk g)(V_{0,k_0})$ for every $k\geq0$, the term $i=k$ on the right of \eqref{2.13} contains $TN(k,k_0)$ for $k\geq1$.
This completes the proof of the equivalence stated in Lemma \ref{lem:2.1}.

\end{proof}
 \psp

In the rest of this section, suppose that $U_k(\mfk g)(V_{0,k_0})=V_{k,k_0}\neq\{0\}$ for every $k\geq0$, that $V_{0,k_0}$ is finite-dimensional, and that $I_1\neq\emptyset$.

For a polynomial $f=\sum_i f_i$, where the $f_i$ are its nonzero monomial terms, we say that $f$ is {\it $\mfk d$-homogeneous of degree $c$} if $\mfk d(f_i)=c$ for every $i$. Suppose that the nonzero elements of $P_1$ are $\mfk d_1$-homogeneous of degree $m_1\geq1$ and $\mfk d_2$-homogeneous of degree $m_2$, for another weighted degree function $\mfk d_2$, and that $P_1$ contains a nonzero element. In the notation $d=\mfk d_1+\mfk d_2$, the weights on the variables are added and then extended to polynomials by taking a maximum. Set
\[
m_{(k)}=\max\{d(f):0\neq f\in V_{k,k_0}\}.
\]
This maximum exists since $V_{k,k_0}=U_k(\mfk g)(V_{0,k_0})$ is finite-dimensional.

\begin{lemma}\label{lem:2.2}
If $m_1+m_2\geq1$, then, for every integer $k\geq1$,
\[
m_{(k-1)}<m_{(k)}.
\]
\end{lemma}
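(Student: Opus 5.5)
Since $V_{k-1,k_0}\subseteq V_{k,k_0}$ by \eqref{2.12}, we have $m_{(k-1)}\leq m_{(k)}$ for free. The plan is to produce, inside $V_{k,k_0}$, an element whose $d$-degree strictly exceeds $m_{(k-1)}$, and to do this by multiplying by an element of $P_1$. Fix a nonzero $p\in P_1$. By hypothesis $p$ is $\mathfrak d_1$-homogeneous of degree $m_1$ and $\mathfrak d_2$-homogeneous of degree $m_2$. Hence every monomial term of $p$ has $d$-weight exactly $m_1+m_2$, so $p$ is $d$-homogeneous of degree $m_1+m_2\geq1$.

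\textbf{Step 1 (multiplicativity of the top degree).} Choose $0\neq f\in V_{k-1,k_0}$ with $d(f)=m_{(k-1)}$. This is possible because $V_{k-1,k_0}=U_{k-1}(\mathfrak g)(V_{0,k_0})$ is finite-dimensional and nonzero. We claim $d(fp)=d(f)+d(p)$. Note that $d$ is itself a weighted degree function, given by the weight vector obtained by adding the two weight vectors. It satisfies property (1) of \eqref{2.2} for products. The justification is the usual one: the product of the $d$-top-degree components of $f$ and $p$ is nonzero, since $\mathscr C$ is a domain, and it is the $d$-top component of $fp$. So $d(fp)=m_{(k-1)}+m_1+m_2>m_{(k-1)}$.

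\textbf{Step 2 (membership).} We need $fp\in V_{k,k_0}$. By \eqref{2.11}, $V_{k-1,k_0}$ is spanned by products $f_i g_i$ with $f_i\in TN_{i,k_0}$ and $g_i\in P_1^{k-1-i}$, $i\in\overline{0,k-1}$. Multiplying such a product by $p$ gives $f_i\cdot(g_ip)$ with $g_ip\in P_1^{k-i}$, which is a generator of $V_{k,k_0}$. By linearity, $V_{k-1,k_0}\cdot p\subseteq V_{k,k_0}$. Thus $fp$ is a nonzero element of $V_{k,k_0}$, and therefore $m_{(k)}\geq d(fp)>m_{(k-1)}$.

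\textbf{Main obstacle.} The only delicate point is Step 1. It requires that $d$, defined by adding the weights and then taking the maximum over monomials, really is multiplicative on all of $\mathscr C$, and not merely on monomials. I would verify this via the leading-form argument. Take monomial orders refining $d$. The $d$-leading forms $\mathrm{lf}(f)$ and $\mathrm{lf}(p)$ are nonzero, and their product is nonzero because $\mathscr C$ has no zero divisors. Every other term in the expansion of $fp$ has strictly smaller $d$-weight, so no cancellation can destroy $\mathrm{lf}(f)\,\mathrm{lf}(p)$. The homogeneity of $p$ makes this even simpler, since then $\mathrm{lf}(p)=p$.
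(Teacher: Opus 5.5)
Your proposal is correct and follows the paper's proof. You pick an element of $V_{k-1,k_0}$ of maximal $d$-degree and multiply it by a nonzero element of $P_1$; the paper writes this product as $u_{i_1}(F)=p_{i_1}F$. You get membership in $V_{k,k_0}$ from \eqref{2.11} and conclude $m_{(k)}\geq m_{(k-1)}+m_1+m_2$.

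Your leading-form justification of $d(fp)=d(f)+d(p)$ fills in a step the paper asserts without comment. The opening remark $m_{(k-1)}\leq m_{(k)}$ is not needed. Also, \eqref{2.12} with $r=k-1$ literally gives $V_{k-1,k_0+1}\subseteq V_{k,k_0}$, although $V_{k-1,k_0}\subseteq V_{k,k_0}$ does hold.
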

\begin{proof}
Fix $i_1\in I_1$ with $p_{i_1}\neq0$, and choose $F\in V_{k-1,k_0}$ such that $d(F)=m_{(k-1)}$. By \eqref{2.11},
\[
u_{i_1}(F)=p_{i_1}F\in V_{k,k_0}.
\]
The product is nonzero, and $d(p_{i_1})=m_1+m_2$. Thus
\begin{align*}
m_{(k)}&\geq d\bigl(u_{i_1}(F)\bigr)
=d(p_{i_1}F)\\
&=m_1+m_2+m_{(k-1)}>m_{(k-1)}.
\end{align*}
\end{proof}
The proof uses the multiplication operator to increase the degree of an element of maximal degree in the preceding filtration space. This strict increase is used when comparing highest degree terms in Section~5.

\section[The Filtration Structure of the Oscillator Representation]{The Filtration Structure of the Oscillator Representation $\widetilde{\pi}$}

In this section, $M=\msr H_{\la\ell\ra}$. We choose $M_0$ and apply Lemma~\ref{lem:2.1} to describe $M_k$. The weighted degree measures the increase caused by each oscillator operator.

In Sections~3--5, we consider only the noncanonical oscillator representation $\widetilde{\pi}$ of $\mathfrak{o}(2n)$ obtained via the Fourier transformations. Its explicit formulas are given by
\begin{equation} \label{3.1}
\td{\pi}(E_{i,j}-E_{n+j,n+i})=E_{i,j}^x-E_{j,i}^y\qquad\text{for}\;\;i,j\in\ol{1,n}\end{equation} where
\begin{equation*}E_{i,j}^x=\left\{\begin{array}{ll}-x_j\ptl_{x_i}-\delta_{i, j}&\mbox{if}\;
i,j\in\ol{1,n_1};\\ \ptl_{x_i}\ptl_{x_j}&\mbox{if}\;i\in\ol{1,n_1},\;j\in\ol{n_1+1,n};\\
-x_ix_j &\mbox{if}\;i\in\ol{n_1+1,n},\;j\in\ol{1,n_1};\\
x_i\partial_{x_j}&\mbox{if}\;i,j\in\ol{n_1+1,n};
\end{array}\right.\end{equation*}
and
\begin{equation*}
E_{i,j}^y=\left\{\begin{array}{ll}y_i\ptl_{y_j}&\mbox{if}\;
i,j\in\ol{1,n_2};\\ -y_iy_j&\mbox{if}\;i\in\ol{1,n_2},\;j\in\ol{n_2+1,n};\\
\ptl_{y_i}\ptl_{y_j} &\mbox{if}\;i\in\ol{n_2+1,n},\;j\in\ol{1,n_2};\\
-y_j\partial_{y_i}-\delta_{i, j}&\mbox{if}\;i,j\in\ol{n_2+1,n}.
\end{array}\right.\end{equation*}
For the next two formulas only, $\widetilde\pi(E_{r,s})$ denotes the Fourier transform of the ambient operator $z_r\partial_{z_s}$, where $(z_1,\ldots,z_{2n})=(x_1,\ldots,x_n,y_1,\ldots,y_n)$. The orthogonal generators act by the corresponding differences.
\begin{equation*}
\td{\pi}(E_{i,n+j}) =\left\{\begin{array}{ll}\ptl_{x_i}\ptl_{y_j}&\mbox{if}\;
i \in \ol{1,n_1},j\in\ol{1,n_2};\\ - y_j \ptl_{x_i} &\mbox{if}\;i\in\ol{1,n_1},\;j\in\ol{n_2+1,n};\\
x_i\ptl_{y_j} &\mbox{if}\;i\in\ol{n_1+1,n},\;j\in\ol{1,n_2};\\
-x_iy_j &\mbox{if}\;i\in\ol{n_1+1,n},j\in\ol{n_2+1,n};
\end{array}\right.\end{equation*}
and
\begin{equation} \label{3.5}
\td{\pi}(E_{n+i,j}) =\left\{\begin{array}{ll}
-x_jy_i&\mbox{if }j\leq n_1,\ i\leq n_2;\\
-x_j\partial_{y_i}&\mbox{if }j\leq n_1,\ i>n_2;\\
y_i\partial_{x_j}&\mbox{if }j>n_1,\ i\leq n_2;\\
\partial_{x_j}\partial_{y_i}&\mbox{if }j>n_1,\ i>n_2.
\end{array}\right.\end{equation}

 Denote
\begin{equation*}x^\al=x_1^{\al_1}x_2^{\al_2}\cdots x_n^{\al_n},\;\;y^\al=y_1^{\al_1}y_2^{\al_2}\cdots y_n^{\al_n}\qquad \text{for}\;\;\al=(\al_1,\al_2,...,\al_n)\in \mbb N^n.\end{equation*}
Moreover, we adopt the notation
\begin{equation*}\xi(f)=\td{\pi}(\xi)(f)\qquad\text{for}\;\;\xi\in \mfk{o}(2n),\;f\in\msr A.\end{equation*}
For $\ell \in \mbb Z$, set
\begin{align*}\begin{split}
\msr A_{\la \ell\ra} =\operatorname{Span} \{x^\al y^\be \mid
\al,\be \in \mbb N^n;\, \sum_{r=n_1+1}^n \al_r - \sum_{i= 1}^{n_1} \al_i + \sum_{i= 1}^{n_2} \be_i
-\sum_{r= n_2+1 }^{n } \be_r = \ell  \}.\end{split}\end{align*}

In the rest of this section, we keep the assumptions $\ell \leq 0$ and $n_1 < n_2$. The irreducibility range in Proposition 1 is contained in $\ell\leq0$. Irreducibility is not assumed in the filtration argument.

Recall the transformed Laplace operator $\widetilde\Delta$ in \eqref{1.8}. Define the linear operator $T$ on $\msr A$ by
\begin{align*}
T(x^\al y^\be)=\sum_{i=0}^{\infty} \frac{\left(x_{n_1+1} y_{n_1+1}\right)^i
	 \left(\tilde{\Delta}+\partial_{x_{n_1+1}}\partial_{ y_{n_1+1}}\right)^i }
 {\prod_{r=1}^i \left(\alpha_{n_1+1} +r\right) \left(\beta_{n_1+1}+ r\right)}(x^\al y^\be)
\end{align*}
 for $\al,\be\in \mbb N^n$.
 Put $h=n_1+1$. Each term of $\widetilde\Delta+\partial_{x_h}\partial_{y_h}$ avoids $x_h,y_h$ and lowers the total degree in the remaining positive-weight variables. Thus the sum defining $T$ is finite on every monomial. Since the orthogonal operators on the other indices commute with both this operator and multiplication and integration in $x_h,y_h$, we have
\begin{align}\label{3.10}
T\widetilde\pi(A)=\widetilde\pi(A)T,\qquad
A\in\{E_{i,j}-E_{n+j,n+i},E_{i,n+j}-E_{j,n+i},E_{n+i,j}-E_{n+j,i}\},
\quad i,j\neq h.
\end{align}

 Moreover, the set
\begin{align*}
\left\{ T\left(x^\alpha y^\beta \right) \mid \alpha,\beta  \in \mathbb{N}^n,\alpha_{n_1+1}\beta_{n_1+1}=0,x^\alpha y^\beta \in {\msr A}_{\la \ell \ra}\right\}
\end{align*}forms a basis of the module ${\msr H}_{\la \ell \ra}$, as shown in \cite{LX}. For completeness, let $Q(x^\alpha y^\beta)=x_hy_hx^\alpha y^\beta/((\alpha_h+1)(\beta_h+1))$, and let $\operatorname{pr}_0$ delete monomials divisible by $x_hy_h$. If $R=\widetilde\Delta+\partial_{x_h}\partial_{y_h}$, then $T=\sum_{i\geq0}(QR)^i$. On the subspace with $\alpha_h\beta_h=0$, one has $\operatorname{pr}_0T=1$, proving injectivity. A harmonic polynomial $g$ satisfies $g=\operatorname{pr}_0g+QRg$; iteration gives $g=T(\operatorname{pr}_0g)$. This also proves surjectivity onto the harmonic space.

Define a weighted degree function  ${\mfk d}_1 : {\msr A}  \rightarrow \mathbb{Z}\cup \{-\infty\}$   (cf.(\ref{2.2})-(\ref{2.3})) by
\begin{align*} 
{\mfk d}_1 (x^\al y^\be)= 2 \sum_{i \in J_3} \alpha_i +\sum_{i \in J_2}\alpha_i +2\sum_{i \in J_1}\beta_i+\sum_{i \in J_2}\beta_i.
\end{align*}

The factor $x_{n_1+1}y_{n_1+1}$ has weight $2$ and compensates for the weight $-2$ of each term of $R$. For $\al,\be\in\mbb N^n$ with  $\alpha_{n_1+1} \beta_{n_1+1} =0$ and $ \left(x_{n_1+1} y_{n_1+1}\right)  \left(\tilde{\Delta}+\partial_{x_{n_1+1}}\partial_{ y_{n_1+1}}\right)  \left( x^\alpha y^\beta \right) \neq 0$, we have
\begin{eqnarray*}&
&{\mfk d}_1( (x_{n_1+1} y_{n_1+1})
(\tilde{\Dlt}+\partial_{x_{n_1+1}}\partial_{ y_{n_1+1}})( x^\alpha y^\beta)) \\
&=&{\mfk d}_1((x_{n_1+1} y_{n_1+1})
( \sum^{n_1}_{i=1} x_i \partial_{y_i}-\sum^{n_2}_{r=n_1+2} \partial_{x_r} \partial_{y_r}+\sum^{n}_{s=n_2+1} y_s \partial_{x_s}
)( x^\alpha y^\beta))\\
&=&\max \{{\mfk d}_1( x_{n_1+1} y_{n_1+1} x_i \partial_{y_i}( x^\alpha y^\beta )),\;
{\mfk d}_1(  x_{n_1+1} y_{n_1+1}\partial_{x_r} \partial_{y_r}( x^\alpha y^\beta)),
\\\pse
 && {\mfk d}_1(  x_{n_1+1} y_{n_1+1}  y_s \partial_{x_s}( x^\alpha y^\beta))\mid
i\in J_1, n_1+2 \leq r \leq n_2, s \in J_3\}\\\pse
&=&{\mfk d}_1 \left(x^\alpha y^\beta\right).
\end{eqnarray*}
In general,
\begin{align*}
{\mfk d}_1 ((x_{n_1+1} y_{n_1+1})^i
(\tilde{\Delta}+\partial_{x_{n_1+1}}\partial_{ y_{n_1+1}})^i ( x^\alpha y^\beta))
={\mfk d}_1 (x^\alpha y^\beta) \text{ or } -\infty,
\end{align*}
where $\alpha_{n_1+1} \beta_{n_1+1} =0$.
Every nonzero summand is homogeneous of this weight, and the projection of their sum under $\operatorname{pr}_0$ is the original monomial. Thus there is no total cancellation, and we obtain
\begin{align*}
{\mfk d}_1 (T( x^\alpha y^\beta)) ={\mfk d}_1(x^\alpha y^\beta),\quad\text{ where $ \alpha_{n_1+1} \beta_{n_1+1} =0$.}
\end{align*}

Recall the sets $J_1,J_2,J_3$ in \eqref{eq:Jsets}. Then we have
\begin{align*}
{\mfk d}_1 \left( (E_{i,j}-E_{n+j,n+i})(f) \right)  \leq
\begin{cases}
{\mfk d}_1 \left( f \right)+2  &\text{if} \left( i,j\right)  \in J_3 \times J_1, \\
{\mfk d}_1 \left( f \right)+1  &\text{if}  \left( i,j\right)  \in   J_2 \times J_1  \cup  J_3 \times J_2,  \\
{\mfk d}_1 \left( f \right) &\text{if}  \left( i,j\right)  \notin  J_2 \times J_1  \cup  J_3 \times J_2 \cup J_3 \times J_1 ;
\end{cases}
\end{align*}
\begin{align*}
{\mfk d}_1 \left( (E_{i,n+j}-E_{j,n+i})(f) \right)  \leq
\begin{cases}
{\mfk d}_1 \left( f \right)+2  &\text{if} \left( i,j\right)  \in J_3 \times J_3, \\
{\mfk d}_1 \left( f \right)+1  &\text{if}  \left( i,j\right)  \in   J_2 \times J_3  \cup  J_3 \times J_2,  \\
{\mfk d}_1 \left( f \right) &\text{if}  \left( i,j\right)  \notin  J_2 \times J_3  \cup  J_3 \times J_2 \cup J_3 \times J_3 ;
\end{cases}
\end{align*}
and
\begin{align*}
{\mfk d}_1 \left( (E_{n+i,j}-E_{n+j,i})(f) \right)  \leq
\begin{cases}
{\mfk d}_1 \left( f \right)+2  &\text{if} \left( i,j\right)  \in J_1 \times J_1, \\
{\mfk d}_1 \left( f \right)+1  &\text{if}  \left( i,j\right)  \in   J_2 \times J_1  \cup  J_1 \times J_2,  \\
{\mfk d}_1 \left( f \right) &\text{if}  \left( i,j\right)  \notin  J_2 \times J_1  \cup  J_1 \times J_2 \cup J_1 \times J_1.
\end{cases}
\end{align*}

We set
\begin{equation*}
TN(k) = \operatorname{Span} \{  T (x^\alpha y^\beta )      \mid
{\mfk d}_1 \left(x^\alpha y^\beta  \right) = k ,x^\alpha y^\beta   \in {\msr A}_{\left \langle \ell \right \rangle  }, \alpha_{n_1+1} \beta_{n_1+1} =0  \},\end{equation*}
\begin{equation*}
TN_k = \operatorname{Span} \{  T (x^\alpha y^\beta )      \mid
{\mfk d}_1 \left(x^\alpha y^\beta  \right)\leq k ,x^\alpha y^\beta   \in {\msr A}_{\left \langle \ell  \right \rangle  }, \alpha_{n_1+1} \beta_{n_1+1} =0  \}.\end{equation*}

Let
\begin{align} \label{3.22}
W_1 = \{x_{i_1} y_{j_1}-x_{j_1} y_{i_1} \mid i_1,j_1 \in J_1, i_1<j_1\},
\end{align}\begin{align*}
W_3 = \{x_{i_3} y_{j_3}-x_{j_3} y_{i_3} \mid i_3,j_3 \in J_3, i_3<j_3\},
\end{align*}
\begin{align} \label{3.24}
P = \{-x_{i_1} x_{i_3}+y_{i_1} y_{i_3} \mid i_1 \in J_1, i_3\in J_3 \},
\end{align}
and $W=W_1\cup W_3 \cup P$.
Since \begin{align*} \begin{split}
(E_{i_3,i_1}-E_{n+i_1,n+i_3})(f)= (-x_{i_1} x_{i_3}+y_{i_1} y_{i_3})\cdot f,\\
(E_{i_3,n+j_3}-E_{j_3,n+i_3})(f)= (-x_{i_3} y_{j_3}+x_{j_3} y_{i_3})\cdot f,\\
(E_{n+i_1, j_1}-E_{n+j_1, i_1})(f)= (x_{i_1} y_{j_1}-x_{j_1} y_{i_1})\cdot f,
\end{split}\end{align*}
for $f \in \msr A$, we have \begin{align*}
T(f \cdot g) = T(f) \cdot g, \quad \text{for } f \in \msr A, g \in W  \end{align*} by (\ref{3.10}).

For $r,k\in\mbb N$, we set
\begin{align*}
TN(r) \cdot W^k =\operatorname{Span} \{fg\mid f\in TN(r),g\in W^k \}
\end{align*}
and
\begin{align*}
V_k&=\operatorname{Span}\{
TN(i),TN(k-i)W^i\mid i=0,1,\ldots,k\}\\
&=\operatorname{Span}\{
TN_k,TN(k-i)W^i\mid i=1,\ldots,k\}\\
&=\sum_{i=0}^k TN_{k-i}W^i\qquad(k\geq0).
\end{align*}
In particular, $V_0=TN_0$. For the last equality, use $TN(j)W\subseteq TN(j+2)$, which follows from the weight of $W$ and the commutation of $T$ with multiplication by $W$. If $j\leq k-i$, a term in $TN(j)W^i$ either has weight $j+2i\leq k$ and belongs to $TN_k$, or can be written in $TN(k-i')W^{i'}$ with $i'=j+2i-k$ by absorbing $i-i'$ factors into the first factor.

\begin{proposition}\label{prop:even-filtration} For $k\in \mbb N$, take $M_0=V_0$. Then $M_k=U_k(\mfk g)(V_0)=V_k$. Moreover, $M= {\msr H}_{\la\ell \ra}=U(\mfk g)(V_0)$.
\end{proposition}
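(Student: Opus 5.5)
The plan is to apply Lemma~\ref{lem:2.1} with $\msr C=\msr A$, $M=\msr H_{\la\ell\ra}$, $k_0=0$ and $\mfk d_1$ as defined above. We take $I_1$ to index the multiplication operators $E_{i_3,i_1}-E_{n+i_1,n+i_3}$, $E_{i_3,n+j_3}-E_{j_3,n+i_3}$ and $E_{n+i_1,j_1}-E_{n+j_1,i_1}$, so that $P_1=W$ up to sign. We take $\msr C_0$ to be the span of the monomials in $\msr A_{\la\ell\ra}$ with $\al_h\be_h=0$; after passing to the representative modulo $x_hy_h$ where needed, it is stable under multiplication by $W$, since no element of $W$ involves $x_h$ or $y_h$. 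The linear map is $T$.

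\textbf{Hypotheses of Lemma~\ref{lem:2.1}.}
\begin{itemize}
\item Condition \eqref{2.5} can be read off the three tables of degree bounds above. The generators whose $\mfk d_1$ increase is $2$ are exactly those in $I_1$. All others raise $\mfk d_1$ by at most $1$.
\item Condition \eqref{2.6} follows from ${\mfk d}_1(T(x^\al y^\be))={\mfk d}_1(x^\al y^\be)$ together with the homogeneity of each summand of $T$.
\item Condition \eqref{2.7} is the identity $T(fg)=T(f)g$ for $g\in W$ established above.
\end{itemize}
Then $V_{k,0}$ coincides with $V_k=\sum_i TN_{k-i}W^i$. Lemma~\ref{lem:2.1} gives $U_k(\mfk g)(V_0)\subseteq V_k$ at once. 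Since every basis vector $T(x^\al y^\be)$ lies in some $TN_k$, the equality $M_k=V_k$ also yields $U(\mfk g)(V_0)=\msr H_{\la\ell\ra}$.

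\textbf{The lowering condition \eqref{2.13}.} It remains to show that $TN(k)\subseteq\sum_{i\geq1}U_i(\mfk g)(V_{k-i})$ for $k\geq1$. We argue by induction on $k$ and, inside a fixed $k$, by a secondary induction on a suitable order of monomials. Take a monomial $x^\al y^\be\in\msr A_{\la\ell\ra}$ with $\al_h\be_h=0$ and ${\mfk d}_1=k\geq1$. It contains some positive-weight variable: an $x_r$ with $r\in J_2\cup J_3$, or a $y_s$ with $s\in J_1\cup J_2$.

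We then produce it from a monomial of $\mfk d_1$-degree $k-1$ or $k-2$ using one generator of degree-increase $1$ or $2$. Examples are $E_{r,j}-E_{n+j,n+r}$ with $j\in J_1$, which contains $-x_rx_j$, or $E_{n+s,j}-E_{n+j,s}$, which contains $-x_jy_s$. Alternatively one can use a first-order operator such as $x_r\ptl_{x_j}$ acting on a monomial of degree $k-1$.

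The key computation is
\begin{align*}
\xi\, T(x^{\al'}y^{\be'}) = c\,T(x^\al y^\be) + (\text{terms in } TN_{k-1}+TN(k-2)W+\cdots),
\end{align*}
with $c\neq0$. To obtain it, first use \eqref{3.10} whenever $\xi$ avoids the index $h$, so that $\xi T=T\xi$. Then read off the top $\mfk d_1$-homogeneous part and apply $\operatorname{pr}_0$. The correction terms lie either in $TN_{k-1}\subseteq V_{k-1}$ or in lower pieces that the induction already covers. Because $\ell\leq0$, the module contains enough negative-weight variables ($x_i$ with $i\in J_1$, or $y_s$ with $s\in J_3$) to act as a source: dividing $x^\al y^\be$ by $x_r$ and multiplying by some $x_j$ with $j\in J_1$, or a similar adjustment, stays inside $\msr A_{\la\ell\ra}$. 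Alternatively we apply a first-order operator $x_r\ptl_{x_j}$ or $y_s\ptl_{y_t}$ to a monomial obtained by trading a variable for one of lower weight.

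\textbf{Expected obstacle.} The main difficulty is the case where the only available generator involves the index $h=n_1+1$, or where the target monomial has $\al_h\be_h=0$ but the action produces terms divisible by $x_hy_h$. In that case $T$ does not commute with $\xi$, and one has to compute $\xi T-T\xi$ explicitly and check that the error stays in $V_{k-1}$. I would first try to avoid index $h$ altogether, by choosing the auxiliary index in $J_2\setminus\{h\}$, $J_1$ or $J_3$. Only when $n_2=n_1+1$ (so $J_2=\{h\}$) and the monomial is a pure power of $x_h$ or $y_h$ would I treat the remaining case by a direct calculation using $\tilde\Delta T=0$. A second delicate point is ordering the monomials so that the secondary induction terminates; I would order them by the $J_2$-part of the degree.
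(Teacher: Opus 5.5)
Your setup for the inclusion $U_k(\mathfrak g)(V_0)\subseteq V_k$ (choice of $\msr C_0$, $T$, $P_1=W$, and the checks of \eqref{2.5}--\eqref{2.7}) agrees with the paper's. The gap is the lowering condition \eqref{2.13}, which is the real content of the proposition, and your proposal does not establish it.

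\textbf{Where the argument breaks.} The ``key computation'' is not correct as stated. A generator with $\mathfrak d_1$-increase one has two parts, and both raise $\mathfrak d_1$ by one. So the correction term lies in $TN(k)$, at the same level as the target, not in $TN_{k-1}$. Two examples:
\begin{itemize}
\item For $(r,j)\in J_2\times J_1$, $E_{r,j}-E_{n+j,n+r}$ acts as $-x_rx_j-y_j\partial_{y_r}$.
\item For $(r,j)\in J_3\times J_2$, it acts as $x_r\partial_{x_j}+y_jy_r$.
\end{itemize}
Your secondary induction on the $J_2$-degree does not handle this, because these two kinds of generators move the $J_2$-degree of the correction in opposite directions. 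In the first case the correction has smaller $J_2$-degree than the target. The second case is what you need for a target $x_rm''$ with no middle variables, since all its positive-weight variables ($x_r$ with $r\in J_3$, $y_s$ with $s\in J_1$) have weight two. There the correction $x_jy_jy_rm''$ has $J_2$-degree two larger than the target. The weight-two generators do not isolate monomials either: $-x_{i_1}x_{i_3}+y_{i_1}y_{i_3}$ and the elements of $W_1,W_3$ are binomials whose two terms lie at the same level. So no terminating order has been exhibited. The obstacle you single out, the index $h$, is not the main one.

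\textbf{The missing idea.} The paper uses a structural reduction. It splits $TN(k)$ by the bidegree $(\mathfrak l_1,\mathfrak l_2)$. This bidegree is preserved by $\mathfrak a=\operatorname{Span}\{E_{i,j}-E_{n+j,n+i}\}\cong\mathfrak{gl}(n)$, whose center acts on each bidegree component by a scalar.
\begin{itemize}
\item If $\mathfrak l_1(f_0),\mathfrak l_2(f_0)\leq k-1$, the paper applies the $\mathfrak{sl}(n)$ filtration result (Proposition~2.1 of \cite{ZX1}) to that component. This gives $f\in U_{k-n_0}(\mathfrak a)(\overline V_0)\subseteq U_{k-n_0}(\mathfrak g)(V_{n_0})$ with $n_0=\max\{\mathfrak l_1(f_0),\mathfrak l_2(f_0),0\}\leq k-1$. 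All the same-level corrections are absorbed in that result.
\item If $\mathfrak l_1(f_0)=k$ (the case $\mathfrak l_2(f_0)=k$ is symmetric), then $f_0$ is a product of $x_i$ with $i\in J_2$ and $y_j$ with $j\in J_3$, and it is already harmonic. Put $g=f_0/(x_{i_1}y_{j_1})\in TN(k-1)$. Then $f_0=(E_{i_1,n+j_1}-E_{j_1,n+i_1})(-g)$, since this generator acts as $-x_{i_1}y_{j_1}-x_{j_1}\partial_{y_{i_1}}$ and $g$ contains no $y_{i_1}$.
\end{itemize}
To finish your direct route, you would in effect have to reprove that $\mathfrak{sl}(n)$ filtration theorem by hand, and also treat this boundary case.
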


\begin{proof}

Take $\msr C=\msr A$, $\msr C_0=\text{Span}\left\{ x^\alpha y^\beta   \in {\msr A}_{\la \ell \ra}  \mid \alpha_{n_1+1}\beta_{n_1+1}=0 \right\}$, $\Gamma=\mbb Z$, $k_0=0$ and $P_1=W$ in Lemma \ref{lem:2.1}. It can be verified that ${\mfk d}_1$, $T$ and $V_{k,k_0}=V_k$ satisfy the conditions of  \ref{lem:2.1}. Hence, by Lemma \ref{lem:2.1}, we have $M_k \subseteq V_k$ for all $k \in \mbb N$.

Here $V_0=TN_0$ is the space of monomials in $x_i$ ($i\in J_1$) and $y_j$ ($j\in J_3$) of degree $-\ell$, hence is finite-dimensional. Next we prove $V_k\subseteq M_k$ by induction on $k$, starting from $k=0$. By Lemma~\ref{lem:2.1}, it suffices to show that
$TN(k)\subseteq\sum_{i=1}^kU_i(\mfk g)(V_{k-i})$ for every $k\geq1$.

Define  two weighted degree functions ${\mfk l}_1,{\mfk l}_2 : {\msr A}  \rightarrow \mathbb{Z}\cup \{-\infty\}$ (cf.(\ref{2.2})-(\ref{2.3})) by
\begin{align*}
{\mfk l}_1 (x^\al y^\be)=  \sum_{i \in J_2\cup J_3} \alpha_i-\sum_{i \in J_1}\alpha_i ,\quad
{\mfk l}_2 (x^\al y^\be)=  \sum_{i \in J_1\cup J_2} \be_i-\sum_{i \in J_3}\be_i.
\end{align*}
Then $(\mfk l_1 + \mfk l_2)(x^\alpha y^\beta) = \ell$ for every $x^\alpha y^\beta \in M$.
For $f=T(f_0)=T(x^\al y^\be) \in TN(k)$, we distinguish two cases.
\pse

\medskip\noindent
\textbf{Case 1.} ${\mfk l}_1 (f_0),{\mfk l}_2 (f_0) \leq k-1$.\pse

Let $\mfk a= \operatorname{Span} \{E_{i,j}-E_{n+j,n+i} \mid  i,j\in\ol{1,n}\}$ be a Lie subalgebra of $\mfk o(2n)$.
Set $n_0=\max \{ {\mfk l}_1 (f_0),{\mfk l}_2 (f_0),0\}$ and for $t \in \mbb N$, we define
\begin{align*}\begin{split}
\ol{V}_t=\operatorname{Span}\big\{ &T(x^\al y^\be)\in TN_{n_0+t}   \mid  \al_{n_1+1}\be_{n_1+1} =0, \\ &{\mfk l}_1 (x^\al y^\be)={\mfk l}_1 (f_0), {\mfk l}_2 (x^\al y^\be)={\mfk l}_2 (f_0) \big\}.
\end{split}\end{align*}
On this fixed bidegree the center of $\mfk a$ acts by a scalar, so the $\mathfrak{sl}(n)$ filtration result applies to $\mfk a$ as well. According to Proposition 2.1 in \cite{ZX1}, we have
\begin{align*}
\ol{V}_t  \subseteq U_t(\mfk a) (\ol{V}_0 ) \qquad \text{for } t \in \mbb N.
\end{align*}
Consequently, $f\in U_{k-n_0}(\mfk a) (\ol{V}_0 ) \subseteq U_{k-n_0}(\mfk a) (V_{n_0}) \subseteq U_{k }(\mfk g) (V_{0})$.

\medskip\noindent
\textbf{Case 2.} ${\mfk l}_1 (f_0)=k$ or ${\mfk l}_2 (f_0)=k$.\pse

If ${\mfk l}_1 (f_0)=k$, we can write \begin{align*}  f= (\prod_{s=1}^k x_{i_s})( \prod_{t=1}^{k-\ell} y_{j_t} )=(x_{i_1}y_{j_1})g \end{align*} with $g=(\prod_{s=2}^k x_{i_s})( \prod_{t=2}^{k-\ell} y_{j_t} )$, where $i_s \in J_2$ and $j_t \in J_3$ for $s\in \ol{1,k}$ and $ t \in \ol{1,k-\ell}$.
Since $g\in V_{k-1}$, we have
\begin{align*}   f= (x_{i_1}y_{j_1})g =(E_{i_1,n+j_1}-E_{j_1,n+i_1})(-g) \in \mfk g (V_{k-1}).\end{align*}

 Symmetrically, if $\mfk l_2(f_0)=k$ (so $\mfk l_1(f_0)=\ell-k$), the same argument yields
$f \in \mfk g(V_{k-1})$.

\medskip\noindent
Hence $TN(k) \subseteq \sum_{i=1}^k U_i(\mfk g)(V_{k-i})$ for every
$k \in \mbb N$ with $k \ge 1$. By Lemma~\ref{lem:2.1}, we conclude that $M_k = V_k$ for all
$k \in \mbb N$.

Moreover,
 \begin{equation*}M=\msr H_{\la\ell \ra}\subset\bigcup_{k=0}^\infty TN_k\subset \bigcup_{k=0}^\infty V_k=\bigcup_{k=0}^\infty M_k=M,\end{equation*}
so that  (\ref{1.10}) holds.
\psp

\end{proof}

\section{Linear Independence of Alternating Polynomial Products}

We study the relations between the alternating polynomial products occurring in the filtration. We retain the chain notation of \cite{ZX2}. The defining ideal is described in Lemma~\ref{lem:4.4}.

Set
\begin{equation*}O_1=\{(j,i) \in J_1\times J_1\vert j>i\} \end{equation*} and
\begin{equation*}\Omega_1 = \{\omega_{j,i} \mid (j,i)\in O_1 \} \end{equation*}
so that $\Omega_1$ consists of $\frac{n_1(n_1-1)}{2}$ variables.
Similarly, we set \begin{equation*}O_3=\{(j,i) \in J_3\times J_3\vert j<i\} \end{equation*} and
\begin{equation*}\Omega_3 = \{\omega_{j,i} \mid (j,i)\in O_3 \} .\end{equation*}
For $m\in \mbb N$ and $i=1,3$, let $\Omega_i^{(m)}$ be the set of (monic) monomials of degree $m$ in the elements of $\Omega_i$.

Define
\begin{align*}
J_1'=\ol{0,n_1},\quad J_3'=\ol{n_2+1,n+1}
\end{align*}
and
 \begin{equation}z_{n+1,0}=0,\;z_{n+1,i}=x_i,\;z_{j,0}=y_j\qquad\text{for}\;\;i\in J_1,\;j \in J_3.
\label{a3.17} \end{equation}
Let
\begin{equation*}
\msr F=\mbb F[z_{j,i},\Omega_1,\Omega_3 \mid j\in J_3',\;i\in J_1',\;(j,i)\neq (n+1,0)]\end{equation*}
(cf.(\ref{a3.17})) be the algebra of polynomials in the variables $\{z_{j,i},x_i,y_j,\omega_{j',i'} \mid j\in J_3,\;i\in J_1,\;(j,i)\neq (n+1,0),(j',i')\in O_1\cup O_3\}$. Denote by $\msr F_k$ the subspace of homogeneous polynomials of degree $k$ in $\msr{F}$.

Define the associative algebra homomorphism $  \phi : \msr F \rta \msr A$ by
\begin{align*}
\phi(z_{j,i}) &=
\begin{cases}
y_i y_j - x_i x_j, & \text{if } j \in J_3,\; i \in J_1, \\[2pt]
x_i,                & \text{if } j = n+1,\; i \in J_1, \\[2pt]
y_j,                & \text{if } i = 0,\; j \in J_3,
\end{cases} \\
\phi(\omega_{j,i}) &= x_j y_i - x_i y_j, \qquad \text{for } (j,i) \in O_1 \cup O_3.
\end{align*}

For $N_1, N_2,N_3 \in \mathbb{N}$, $a \in \mathbb{F}$,
$ \{(u_r, v_r) \in J_3'\times J_1'\mid 1 \le r \le N_1\}$,
$\{(j_s, i_s) \in O_1  \mid 1 \le s \le N_2\}$  and
$\{(j'_t, i'_t) \in O_3  \mid 1 \le t \le N_3\}$, we set
\begin{align*}\begin{split}
&\msr I\left( a \prod_{r =1}^{N_1} z_{u_{r},v_{r }}   \prod_{s =1}^{N_2}\omega_{j_s,i_s}\prod_{t =1}^{N_3}\omega_{j'_t,i'_t} \right)  \\
=&
\{ (u_{r},v_{r}),(n+n_1+2-j_{s },i_{s }),(i'_{t},n_2-j'_{t}) \,
\mid\,    r \in \ol{1, N_1},  s \in \ol{1, N_2},   t\in \ol{1, N_3} \}.\end{split}
 \end{align*}

Let $\{(j_1,i_1),(j_2,i_2),\dots,(j_r,i_r)\} \subset \mathbb{Z}^2$.
We say that this set contains a {\it 3-chain} $(j_{k_1},i_{k_1}) \prec (j_{k_2},i_{k_2}) \prec (j_{k_3},i_{k_3})$ if there exist three distinct indices $1 \le k_1 , k_2 , k_3 \le r$ such that
$i_{k_1} < i_{k_2} < i_{k_3}$ and $j_{k_1} < j_{k_2} < j_{k_3}$.
Similarly, it contains a \emph{2-chain} $(j_{k_1},i_{k_1}) \prec (j_{k_2},i_{k_2})$
if there exist two distinct indices $1 \le k_1 , k_2 \le r$ with
$i_{k_1} < i_{k_2}$ and $j_{k_1} < j_{k_2}$.

We say that the set contains an \emph{$\omega_1$-2-chain} $(j_{k_1},i_{k_1}) \prec (j_{k_2},i_{k_2})$
if there exist two distinct indices $1 \le k_1 , k_2 \le r$ such that
$i_{k_1} < i_{k_2}$, $j_{k_1} < j_{k_2}$ and $j_{k_2} > n+1$.
Similarly, it contains an \emph{$\omega_3$-2-chain} $(j_{k_1},i_{k_1}) \prec (j_{k_2},i_{k_2})$
if there exist two distinct indices $1 \le k_1 , k_2 \le r$ with
$i_{k_1} < i_{k_2}$, $j_{k_1} < j_{k_2}$ and $i_{k_1} < 0$.

We say that the set contains an \emph{$\omega_1$-3-chain}
$(j_{k_1},i_{k_1}) \prec (j_{k_2},i_{k_2}) \prec (n+n_1+2-i_{k_3},\,n+n_1+2-j_{k_3})$
if there exist three distinct indices $1 \le k_1, k_2 , k_3 \le r$ such that
\begin{align*}
0 \le i_{k_3} \le i_{k_1} < i_{k_2}, \quad
j_{k_1} < j_{k_2} \le n+1 < j_{k_3} < n + n_1 + 2 - i_{k_2}.
\end{align*}
Similarly, it contains an \emph{$\omega_3$-3-chain}
$(n_2-i_{k_1},\,n_2-j_{k_1}) \prec (j_{k_2},i_{k_2}) \prec (j_{k_3},i_{k_3})$
if there exist three distinct indices $1 \le k_1, k_2, k_3 \le r$ such that
\begin{align*}
n_2 - j_{k_2} < i_{k_1} < 0 \le i_{k_2} < i_{k_3}, \quad
j_{k_2} < j_{k_3} \le j_{k_1} \le n.
\end{align*}

Finally, we say that the set $\{(j_1,i_1),(j_2,i_2),\dots,(j_r,i_r)\}$ is
\emph{free of  $\,\Omega$-chains} if it contains none of the following configurations:
a 3-chain, an $\omega_1$-2-chain, an $\omega_1$-3-chain, an $\omega_3$-2-chain, or an $\omega_3$-3-chain.

All index collections below are multisets, with repetitions counted. For $m\geq0$, we set
\begin{equation*}\wht J_1^m=\{\{i_1,i_2,...,i_m\}\mid i_s\in J_1\},\quad\wht J_3^m=\{\{j_1,j_2,...,j_m\}\mid j_t\in J_3\}.\end{equation*}
Let \begin{equation*}
f=\left(\prod_{t_1=1}^{k_1} x_{s_{1,t_1}}\right)  \left(\prod_{t_2=1}^{k_2} y_{s_{2,t_2}}\right)  \left( \prod_{t_3=1}^{k_3} z_{j_{t_3},i_{t_3}}\right)
\left( \prod_{t_4=1}^{k_4} \omega_{p_{t_4},q_{t_4}}\right)
\left( \prod_{t_5=1}^{k_5} \omega_{p'_{t_5},q'_{t_5}}\right) g\in
\msr F
\end{equation*}
where $g\in\mbb F$; $s_{1,t_1},i_{t_3},p_{t_4},q_{t_4} \in J_1$; $s_{2,t_2},j_{t_3},p'_{t_5},q'_{t_5}  \in J_3$ and $k_1$, $k_2$, $k_3$, $k_4$, $k_5 \in \mbb N$.
We define
\begin{eqnarray*}
& &\msr I_1\left( f \right) = \{s_{1,1},...,s_{1,k_1},i_1,...,i_{k_3}
,p_1,...,p_{k_4},q_1,...,q_{k_4}\} \in \wht J_1^{k_1+k_3+2k_4},
 \\
& &
\msr I_3\left( f \right) = \{s_{2,1},...,s_{2,k_2},j_1,...,j_{k_3},p'_1,...,p'_{k_5},q'_1,...,q'_{k_5}\}
\in \wht J_3^{k_2+k_3+2k_5}.\end{eqnarray*}

Denote by $Z^{(m)}$ the set of (monic) monomials of degree $m$ in the variables
$\{z_{j,i}\mid j\in J_3,\; i\in J_1\}$.
Fix $\mathcal{I}_1\in \widehat{J}_1^{\,k_1+k_3+2k_4}$ and
$\mathcal{I}_3\in \widehat{J}_3^{\,k_2+k_3+2k_5}$. We define
\begin{align}\label{4.12}
\begin{split}
&  G_{k_1,k_2,k_3,k_4,k_5}^{{\cal I}_1,{\cal I}_3}
 \\
= \,&
 \Big\{ f\,=\,g_1g_3h\left(\prod_{t_1=1}^{k_1} x_{s_{1,t_1}}\right)  \left(\prod_{t_2=1}^{k_2} y_{s_{2,t_2}}\right)
\Big\vert\;  h\in   Z^{(k_3)} ,\;g_1\in \Omega_1^{(k_4)},g_3\in \Omega_3^{(k_5)},\;
 \\&
\msr I_1(f)={\cal I}_1, \;
\msr I_3(f)={\cal I}_3,\;\msr I(f) \text{ is free of $\Omega$-chains; }
 \\ &
1\leq s_{1,1}\leq\cdots\leq s_{1,k_1} \leq n_1,\; n_2+1\leq  s_{2,1}\leq\cdots \leq s_{2,k_2} \leq n.
\Big\}
\end{split}\end{align}
and for $k \in \mathbb{N}$,
\begin{align}\label{4.13}\begin{split}
S^{{\cal I}_1,{\cal I}_3}_k = \bigcup_{\sum_{i=1}^5k_i=k} G_{k_1,k_2,k_3,k_4,k_5}^{{\cal I}_1,{\cal I}_3}
\end{split}\end{align}
for $k \in \mbb N$.
Set
\begin{align*} \begin{split}
 V_{k_1,k_2,k_3,k_4,k_5}^{{\cal I}_1,{\cal I}_3} = \operatorname{Span} \{  G_{k_1,k_2,k_3,k_4,k_5}^{{\cal I}_1,{\cal I}_3}  \},\quad
V^{{\cal I}_1,{\cal I}_3}_k=\operatorname{Span}\{S^{{\cal I}_1,{\cal I}_3}_k\}.
\end{split}\end{align*}

According to Lemma~4.9 in \cite{ZX2}, we obtain the following statement.
\begin{lemma}\label{lem:4.1}
For $k_1,k_2,k_3,k_4  \in \mathbb{N}$, ${\cal I}_1\in  \wht J_1^{k_1+k_3+2k_4}$ and ${\cal I}_3\in \wht J_3^{k_2+k_3}$, the set
$  \phi   \left(G_{k_1,k_2,k_3,k_4,0 }^{{\cal I}_1,{\cal I}_3} \right)$ is linearly independent in $\msr A$.
\end{lemma}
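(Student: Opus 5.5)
The plan is to reduce the statement directly to Lemma~4.9 of \cite{ZX2}, of which it is a relabelled instance. When $k_5=0$ there are no $\Omega_3$-variables, so the only constraints on $\msr I(f)$ are the 3-chain, $\omega_1$-2-chain and $\omega_1$-3-chain conditions. The $\omega_3$-conditions involve indices $i_{k_1}<0$, which occur only through the shifted pairs $(i'_t,n_2-j'_t)$ coming from $\omega_{j'_t,i'_t}$, so they are vacuous. The elements being tested are therefore products
\[
\phi(f)=\prod_s x_{s_{1,s}}\prod_t y_{s_{2,t}}\prod_r \bigl(y_{i_r}y_{j_r}-x_{i_r}x_{j_r}\bigr)\prod_q\bigl(x_{p_q}y_{q_q}-x_{q_q}y_{p_q}\bigr),
\]
with $i_r,p_q,q_q,s_{1,s}\in J_1$ and $j_r,s_{2,t}\in J_3$. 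This is precisely the situation of \cite{ZX2}: a block of $z$-type factors indexed by $J_3'\times J_1'$, together with one family of alternating $\omega$-factors on the $J_1$ side.

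First I would set up the dictionary. I would take the bordered index set $J_3'\times J_1'$ with $z_{n+1,i}=x_i$ and $z_{j,0}=y_j$ as in \eqref{a3.17}, and record each $\omega_{j,i}$ as the reflected point $(n+n_1+2-j,i)$. I would then check that the map $(j,i)\mapsto$ (the corresponding index in \cite{ZX2}) is an order-preserving relabelling in each coordinate. Under this relabelling, a 3-chain, an $\omega_1$-2-chain and an $\omega_1$-3-chain become the forbidden patterns of \cite{ZX2}, and the fixed multisets $\msr I_1(f)=\mathcal I_1$, $\msr I_3(f)=\mathcal I_3$ become the fixed row and column contents used there. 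Second, I would check that $\phi$ corresponds to the homomorphism of \cite{ZX2} up to sign and variable renaming. The substitutions $x_i\mapsto$ an $x$-type variable and $y_j\mapsto$ a $y$-type variable, with $z_{j,i}\mapsto y_iy_j-x_ix_j$, are a monomial automorphism composed with signs, and such maps preserve linear independence.

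With the dictionary in place, the conclusion follows verbatim from Lemma~4.9 of \cite{ZX2}. For a self-contained argument, I would choose a monomial order in which the leading term of each factor $y_iy_j-x_ix_j$ and $x_py_q-x_qy_p$ is fixed, with the choice made according to the chain order. The chain-freeness conditions are designed so that the leading monomials of distinct elements of $G^{\mathcal I_1,\mathcal I_3}_{k_1,k_2,k_3,k_4,0}$ are distinct, in the manner of standard monomial theory or RSK straightening. Distinct leading monomials then give linear independence.

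The main obstacle is the bookkeeping in the dictionary step, specifically verifying that the $\omega_1$-3-chain condition, with its inequality $j_{k_3}<n+n_1+2-i_{k_2}$, is exactly the image of the corresponding condition in \cite{ZX2} under the reflection $j\mapsto n+n_1+2-j$. I would test this on small configurations, for example $n_1=2$ with one $z$-factor and one $\omega$-factor, before asserting it in general.
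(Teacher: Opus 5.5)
Your proposal matches the paper's proof, which gives no independent argument and simply obtains the lemma from Lemma~4.9 of \cite{ZX2}. Your two observations are exactly what that citation implicitly uses: the $\omega_3$-chain conditions are vacuous when $k_5=0$, since every second coordinate in $\msr I(f)$ is then nonnegative, and only the relabelling of the $z$- and $\omega_1$-points needs checking.

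The leading-monomial fallback is not needed, and as sketched it is not yet an argument. For $i<i'$ and $j<j'$, both $z_{j,i}z_{j',i'}$ and $z_{j,i'}z_{j',i}$ lie in $G^{{\cal I}_1,{\cal I}_3}_{0,0,2,0,0}$. Their images have the same leading monomial under any term order that gives every $\phi(z_{b,a})$ the same type of leading term ($y_ay_b$ or $x_ax_b$), so such an order would have to be constructed with real care.
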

Symmetrically, for $k_1,k_2,k_3, k_5 \in \mathbb{N}$, ${\cal I}_1\in  \wht J_1^{k_1+k_3 }$ and ${\cal I}_3\in \wht J_3^{k_2+k_3+2k_5}$, the set
$  \phi   \left(G_{k_1,k_2,k_3,0,k_5 }^{{\cal I}_1,{\cal I}_3} \right)$ is linearly independent  in $\msr A$.

Consequently, we have the following result.
\begin{lemma}\label{lem:4.2}
For $k_1,k_2,k_3,k_4,k_5  \in \mathbb{N}$, ${\cal I}_1\in  \wht J_1^{k_1+k_3+2k_4}$ and ${\cal I}_3\in \wht J_3^{k_2+k_3+2k_5}$, the set
$   \phi   \left(G_{k_1,k_2,k_3,k_4,k_5 }^{{\cal I}_1,{\cal I}_3} \right)$ is linearly independent  in $\msr A$.
\end{lemma}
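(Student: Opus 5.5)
The plan is to reduce Lemma~\ref{lem:4.2} to Lemma~\ref{lem:4.1} and its symmetric version, using the fact that the $\Omega_1$-factors and the $\Omega_3$-factors involve disjoint sets of variables. Under $\phi$, a factor $\omega_{j,i}$ with $(j,i)\in O_1$ is a polynomial in $\{x_i,y_i\mid i\in J_1\}$. A factor $\omega_{j,i}$ with $(j,i)\in O_3$ is a polynomial in $\{x_j,y_j\mid j\in J_3\}$. The remaining factors $x_{s}$ ($s\in J_1$), $y_{s}$ ($s\in J_3$) and $z_{j,i}$ mix the two groups only through $y_iy_j-x_ix_j$.

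First I will check that the $\Omega$-chain-free condition on $\msr I(f)$ splits. The $\omega_1$-chains involve only the images $(n+n_1+2-j_s,i_s)$ of the $\Omega_1$-factors together with the $z$, $x$-indices. The $\omega_3$-chains involve only the $\Omega_3$-images together with the $z$, $y$-indices. So $f=g_1g_3h\,x^{\cdot}y^{\cdot}$ is free of $\Omega$-chains exactly when $g_1h\,x^{\cdot}y^{\cdot}$ is free of 3-chains and $\omega_1$-chains, and $g_3h\,x^{\cdot}y^{\cdot}$ is free of 3-chains and $\omega_3$-chains. This step needs a direct check that no 3-chain can mix an $\Omega_1$-image with an $\Omega_3$-image. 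That holds because the $\Omega_1$-images have first coordinate $>n+1$, while the $\Omega_3$-images have second coordinate $<0$.

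Next I will order by the $\Omega_3$-part. Consider a linear relation $\sum_f c_f\phi(f)=0$ with $f\in G^{{\cal I}_1,{\cal I}_3}_{k_1,\dots,k_5}$. I will introduce an auxiliary grading on $\msr A$ that sees only the antisymmetric structure in the $J_3$-variables. Concretely, I will specialize or expand in the $J_3$-variables and use a term order on monomials in $\{x_j,y_j\}_{j\in J_3}$ under which the leading term of $\phi(g_3)$ is distinguished.

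The cleaner route, which I would try first, is to mimic the proof of Lemma~4.9 in \cite{ZX2} directly. That proof presumably proceeds by a leading-monomial (straightening) argument. One exhibits an injective assignment $f\mapsto \operatorname{lm}(\phi(f))$ with respect to a monomial order, where injectivity follows from chain-freeness. Since the leading monomial is multiplicative, $\operatorname{lm}\phi(f)=\operatorname{lm}\phi(g_1)\operatorname{lm}\phi(g_3)\operatorname{lm}\phi(h\,x^{\cdot}y^{\cdot})$. I will choose a product order, lexicographic first on the $J_1$-block of variables and then on the $J_3$-block, and restricting to the orders used in Lemma~\ref{lem:4.1} and in its symmetric version on each block.

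The key claim is then that the leading monomial determines $f$. From the $J_1$-block of $\operatorname{lm}\phi(f)$, together with the known multiset ${\cal I}_1$, the argument of Lemma~\ref{lem:4.1} recovers $g_1$ and the $J_1$-indices of $h$ and of the $x$-factors. From the $J_3$-block, the symmetric argument recovers $g_3$ and the $J_3$-indices. The leading monomial of $\phi(z_{j,i})$ is a product $x_ix_j$ or $y_iy_j$ split across the blocks, so the block-wise reconstructions must be glued into a single pairing $(j,i)$ for the $z$-factors.

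I expect this gluing step to be the main obstacle: showing that the pairing of the $J_1$- and $J_3$-indices in $h$ is uniquely forced. For this I would use the 3-chain-freeness of the $z$-indices, which makes the pairing an anti-monotone matching. Given the multisets of $i$'s and $j$'s attached to $z$-factors, such a matching is unique, and therefore distinct $f$ give distinct leading monomials, which proves linear independence.
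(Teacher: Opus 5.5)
There is a genuine gap, and it sits exactly where you claim the chain condition ``splits.'' You only examined 3-chains, but the decisive configurations are the $\omega_1$-2-chains and $\omega_3$-2-chains, and these do mix the two blocks.

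An $\Omega_3$-factor $\omega_{p',q'}$ contributes $(q',n_2-p')$ to $\mathscr I(f)$. Its first coordinate is $\le n$ and its second is $<0$. An $\Omega_1$-factor $\omega_{p,q}$ contributes $(n+n_1+2-p,q)$, with first coordinate $\ge n+2$ and second coordinate $\ge 1$. So the two always form a 2-chain whose upper element has first coordinate $>n+1$, which is an $\omega_1$-2-chain. Likewise, a factor $y_s=z_{s,0}$ gives $(s,0)\prec(n+n_1+2-p,q)$. A factor $x_t=z_{n+1,t}$ together with an $\Omega_3$-factor gives an $\omega_3$-2-chain.

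Hence $G^{\mathcal I_1,\mathcal I_3}_{k_1,\dots,k_5}=\emptyset$ unless $(k_2+k_5)k_4=0$ and $(k_1+k_4)k_5=0$. In particular one of $k_4,k_5$ vanishes, and the statement is then literally Lemma~\ref{lem:4.1} or its symmetric counterpart. That observation is the whole proof in the paper. Your claimed equivalence (free of $\Omega$-chains iff the two halves are separately free) is false, and the mixed objects your plan is built to handle do not exist.

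Even apart from this, the leading-monomial reconstruction fails at the gluing step. Freeness of 3-chains does not make the $z$-pairing anti-monotone, since 2-chains are allowed. So the pairing is not determined by the index multisets. For $i_1<i_2$ in $J_1$ and $j_1<j_2$ in $J_3$, both $z_{j_1,i_1}z_{j_2,i_2}$ and $z_{j_1,i_2}z_{j_2,i_1}$ are free of $\Omega$-chains and have the same $\mathscr I_1,\mathscr I_3$. Their images share the terms $y_{i_1}y_{i_2}y_{j_1}y_{j_2}$ and $x_{i_1}x_{i_2}x_{j_1}x_{j_2}$. Under your block order, whenever the $J_1$-leading part is $x_{i_1}x_{i_2}$ or $y_{i_1}y_{i_2}$ (e.g.\ any order comparing all $x_i$ with $y_i$ uniformly), the two images have the same leading monomial. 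Thus the injectivity you need fails as stated.

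Linear independence of such $z$-monomials is a rank-two determinantal fact, and that is exactly what Lemma~\ref{lem:4.1} supplies. Once you notice that the mixed sets are empty, you should use that lemma as given rather than attempt to reprove it.
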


\begin{proof}
Note that if $G_{k_1,k_2,k_3,k_4,k_5}^{{\cal I}_1,{\cal I}_3} \neq \emptyset$, then $(k_2+k_5)k_4=0$ in (\ref{4.12}).
Indeed, if both $k_2+k_5>0$ and $k_4>0$, there would exist $\omega_{p,q} \mid g_1$ for some $(p,q) \in O_1$, and then $\mathcal{I}(f)$ would contain an $\omega_1$-2-chain $(s,0) \prec (n+n_1+2-p,q)$ for some $s\in J_3$, or $(q',n_2-p')\prec(n+n_1+2-p,q)$ for some $(p',q')\in O_3$.
Similarly, we also have $(k_1+k_4)k_5=0$ whenever $G_{k_1,k_2,k_3,k_4,k_5}^{{\cal I}_1,{\cal I}_3} \neq \emptyset$.
Hence $G_{k_1,k_2,k_3,k_4,k_5}^{{\cal I}_1,{\cal I}_3} = \emptyset$ whenever $k_4 \neq 0$ and $k_5 \neq 0$.
If $k_4=0$ or $k_5=0$, the required linear independence follows directly from Lemma~\ref{lem:4.1} (or its symmetric counterpart).

\end{proof}

\begin{lemma}\label{lem:4.3}
For $k_1,k_2,k_3,k_4,k_5  \in \mathbb{N}$, ${\cal I}_1\in  \wht J_1^{k_1+k_3+2k_4}$ and ${\cal I}_3\in \wht J_3^{k_2+k_3+2k_5}$, the set $   \phi   \left(S_k^{{\cal I}_1,{\cal I}_3} \right)$ is linearly independent in $\msr A$, where $k=\sum_{i=1}^5k_i$.
\end{lemma}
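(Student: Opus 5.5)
We reduce Lemma~\ref{lem:4.3} to Lemma~\ref{lem:4.2} by showing that the union \eqref{4.13} over the tuples $(k_1,\dots,k_5)$ with $\sum k_i=k$ splits into pieces whose images under $\phi$ lie in independent subspaces of $\msr A$. The fixed data are ${\cal I}_1$ and ${\cal I}_3$, which record the multisets of $J_1$-indices and $J_3$-indices. First we compute the bidegrees. For $f\in G_{k_1,\dots,k_5}^{{\cal I}_1,{\cal I}_3}$, the polynomial $\phi(f)$ is homogeneous in the variables $x_i,y_i$ for each individual index $i$. The factor $x_{s}$ contributes $x$-degree $1$ at index $s\in J_1$. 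The factor $z_{j,i}$ contributes a sum $y_iy_j-x_ix_j$ whose terms are not homogeneous separately in $x$ and $y$. The factor $\omega_{p,q}$ contributes $x$-degree $1$ and $y$-degree $1$.

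We therefore use a finer grading. For $i\in J_1$, give $x_i$ weight $+1$ and $y_i$ weight $-1$, and give variables with indices outside $J_1$ weight $0$. Every factor $\phi(z_{j,i})$, $\phi(\omega)$, $x_s$, $y_s$ is then homogeneous. The weights are $-1$ for $z_{j,i}$ (both terms $y_iy_j$ and $-x_ix_j$ have the $J_1$-part $y_i$, respectively $x_i$; more precisely they have weights $-1$ and $+1$, which do not agree). So this grading fails, and we turn instead to the total degree in the $J_1$-variables and in the $J_3$-variables.

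Define $\deg_1$ as the total degree in $\{x_i,y_i: i\in J_1\}$ and $\deg_3$ as the same in $J_3$. Then $\phi(f)$ is bihomogeneous with
\[
\deg_1=k_1+k_3+2k_4=|{\cal I}_1|,\qquad \deg_3=k_2+k_3+2k_5=|{\cal I}_3|,
\]
and its total degree is $k_1+k_2+2k_3+2k_4+2k_5$. Fixing ${\cal I}_1,{\cal I}_3$ and $k$ leaves only a few tuples possible. The quantities $a=k_1+k_3+2k_4$, $b=k_2+k_3+2k_5$ and $k$ are fixed, and the total degree equals $a+b$, so it gives no new information. We therefore need more invariants: the $x$-degree and the $y$-degree restricted to $J_1$. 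The term $x_ix_j$ of $\phi(z_{j,i})$ has $x$-degree $1$ on $J_1$, and the term $y_iy_j$ has $x$-degree $0$, so these degrees vary within a single product. The plan is to take leading terms instead. We choose a monomial order in which, for $z_{j,i}$, the term $y_iy_j$ dominates, and we order the $\omega$'s in the same way as \cite{ZX2}. The leading monomial of $\phi(f)$ then has $J_1$-part $x$-degree equal to $k_1+k_4$ and $J_3$-part $y$-degree equal to $k_2+k_5$. Combined with $k$, $a$ and $b$, these two numbers determine $(k_1,\dots,k_5)$: we get $k_1+k_4$, $k_2+k_5$, $k_3=k-(k_1+k_4)-(k_2+k_5)$, then $k_4=a-k_3-(k_1+k_4)$ and hence $k_1$, and likewise $k_5$ and $k_2$.

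We then run a triangularity argument. Suppose $\sum c_f\phi(f)=0$. Take the largest leading monomial among those $f$ with $c_f\neq0$. If the leading monomials of different pieces $G_{k_1,\dots,k_5}$ never coincide, the relation restricts to a single piece, and Lemma~\ref{lem:4.2} gives a contradiction. To make the separation robust we will invoke the known leading-term description from \cite{ZX2}: the $\Omega$-chain-free condition is exactly what makes leading monomials within a piece distinct. Across pieces, distinctness follows from the reconstruction of $(k_i)$ carried out above. One also has to check that the pieces compatible with the emptiness observation of Lemma~\ref{lem:4.2}, namely $(k_2+k_5)k_4=0$ and $(k_1+k_4)k_5=0$, are the only ones present.

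The main obstacle is the step that controls leading monomials. We must verify that a single monomial order makes the leading monomials of all chain-free products in $S_k^{{\cal I}_1,{\cal I}_3}$ pairwise distinct, rather than only those within one $G$. If that direct check becomes unwieldy, the fallback is a filtration argument. Group the pieces by the pair $(k_1+k_4,\,k_2+k_5)$ and project onto the component of $\phi(f)$ of maximal $x$-degree on $J_1$. Elements of different groups then separate by an upper-triangular argument, and within a group Lemma~\ref{lem:4.2} applies.
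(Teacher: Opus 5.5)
\textbf{There is a genuine gap in how you separate the pieces} $G^{\mathcal I_1,\mathcal I_3}_{k_1,\dots,k_5}$.

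First, the leading-monomial invariants are miscomputed. If $y_iy_j$ is the leading term of $\phi(z_{j,i})$, then each $z$ factor contributes $y_j$ with $j\in J_3$. So the $J_3$-part $y$-degree of the leading monomial is $k_2+k_3+k_5$, not $k_2+k_5$. With this value, your data $k$, $a$, $b$, $k_1+k_4$ no longer determine the tuple. Concretely, take $\phi(y_r\omega_{j,i})$ and $\phi(x_jz_{r,i})$ (or $\phi(x_iz_{r,j})$, depending on which term of $\phi(\omega_{j,i})$ leads). They have the same $\mathcal I_1$, $\mathcal I_3$ and $k$, with tuples $(0,1,0,1,0)$ and $(1,0,1,0,0)$. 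Under your order they have the same leading monomial; this is relation (7) of Appendix~\ref{app:relations}.

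Second, the triangularity step does not work as stated. The largest leading monomial controls only one coefficient. So knowing that leading monomials of different pieces never coincide does not let you restrict a relation to a single piece. To run a leading-term argument you would need all leading monomials in $S_k^{\mathcal I_1,\mathcal I_3}$ to be pairwise distinct. That is stronger than Lemma~\ref{lem:4.2}, and you only attribute it to the reference behind Lemma~\ref{lem:4.1}. Your fallback, projecting onto the top $J_1$ $x$-degree component, has the same problem: it needs injectivity of a new map on each piece, which Lemma~\ref{lem:4.2} does not provide.

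\textbf{The missing idea} is to use the emptiness constraints $(k_2+k_5)k_4=0$ and $(k_1+k_4)k_5=0$ from the proof of Lemma~\ref{lem:4.2} at the outset, not as a final check. They leave three families:
\begin{itemize}
\item $(k_1,k_2,k_3,0,0)$;
\item $(k_1,0,k_3,k_4,0)$ with $k_4>0$;
\item $(0,k_2,k_3,0,k_5)$ with $k_5>0$.
\end{itemize}
On these families, the bidegrees you already computed, $a=\deg_1$ and $b=\deg_3$, together with $k$, determine the tuple:
\begin{itemize}
\item it is $(k-b,\,k-a,\,a+b-k,\,0,\,0)$ when $a\leq k$ and $b\leq k$;
\item it is $(2k-a-b,\,0,\,b,\,a-k,\,0)$ when $a>k$;
\item it is $(0,\,2k-a-b,\,a,\,0,\,b-k)$ when $b>k$.
\end{itemize}
These ranges are mutually exclusive, since $a>k$ and $b>k$ together would force $2k-a-b<0$. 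Hence $S_k^{\mathcal I_1,\mathcal I_3}$ is a single $G$ (or empty), and Lemma~\ref{lem:4.2} finishes the proof; no monomial order is needed.

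The paper's proof uses this same separation. It phrases it through the total degree $k+r$ and the $J_1$-degree $\mathfrak q_1$ (your $\deg_1$), and the three families land in the disjoint ranges $[r,k]$, $[k+1,k+r]$ and $[0,r-1]$.
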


\begin{proof}
From \eqref{4.12} and \eqref{4.13} we have
\begin{align*}
S^{{\cal I}_1,{\cal I}_3}_k =  \left(\bigcup_{\sum_{i=1}^4k_i=k } G_{k_1,k_2,k_3,k_4,0}^{{\cal I}_1,{\cal I}_3} \right) \bigcup
\left(\bigcup_{\sum_{1\leq i\leq 5,i\neq 4} k_i=k } G_{k_1,k_2,k_3,0,k_5}^{{\cal I}_1,{\cal I}_3}\right). \end{align*}
Assume that $f \in \operatorname{Span}\{S_k^{\mathcal{I}_1,\mathcal{I}_3}\}$ satisfies $\phi(f)=0$.
Then $f$ can be written as
\begin{align} \label{a4.17}
f=\sum_{\sum_{i=1}^5 k_i=k }  f^{k_1,k_2,k_3,k_4,k_5} \end{align}
where $ f^{k_1,k_2,k_3,k_4,k_5} \in \operatorname{Span} \{G_{k_1,k_2,k_3,k_4,k_5}^{{\cal I}_1,{\cal I}_3}\} $ when $G_{k_1,k_2,k_3,k_4,k_5}^{{\cal I}_1,{\cal I}_3}\neq \emptyset$ and
$f^{k_1,k_2,k_3,k_4,k_5}=0$, otherwise.

Define
\begin{align*} \begin{split}
&f_{1,r}=\sum_{k_1+k_2=k-r } f^{k_1,k_2,r,0,0},\;  \\
&f_{2,r} =\sum_{  k_3+k_4= r,k_4>0 } f^{k-r,0,k_3,k_4,0},\; \\
&f_{3,r}=\sum_{  k_3+k_5= r,k_5>0 } f^{0,k-r,k_3,0,k_5},\; . \end{split}\end{align*}
for $r=0,1,\dots,k$. Then \begin{align*}
f=\sum_{r=0}^k f_{1,r}+\sum_{r=1}^k f_{2,r}+\sum_{r=1}^k f_{3,r}.\end{align*}

Since $\deg (\phi(f_{i,r}))=r+k$, we obtain $\phi\bigl(\sum_{i=1}^{3}f_{i,r}\bigr)=0$ for each $r \in \overline{0,k}$.

Introduce the weighted degree
\[
\mfk q_1(x^\alpha y^\beta)=\sum_{i\in J_1}(\alpha_i+\beta_i).
\]
For fixed $r$, the nonzero images of the summands in $f_{1,r}$, $f_{2,r}$ and $f_{3,r}$ have respective $\mfk q_1$-degrees
\[
k_1+r\in[r,k],\qquad k+r-k_3\in[k+1,k+r],\qquad k_3\in[0,r-1].
\]
These three intervals are disjoint. Within each group the displayed degree determines the remaining index. Comparing homogeneous components in $\phi(f_{1,r}+f_{2,r}+f_{3,r})=0$ therefore gives
$\phi(f^{k_1,k_2,k_3,k_4,k_5})=0$ for every summand.

Summing up, we have shown $\phi(f^{k_1,k_2,k_3,k_4,k_5}) = 0$ for every summand in \eqref{a4.17}.
By Lemma~\ref{lem:4.2}, each $f^{k_1,k_2,k_3,k_4,k_5}$ must be zero, and hence $f=0$.
Thus $\phi(S_k^{\mathcal{I}_1,\mathcal{I}_3})$ is linearly independent.

\end{proof}

For $i\in J_1$; $j\in J_3$; $i',j' \in J_s$ with $j'>i'$, $s=1,3$,
we set
\begin{align*}
\omega_{i,0}=x_i,\;\; \omega_{n+1,j}=-y_j,\;\;\omega_{i,i}=\omega_{n+1,n+1}=\omega_{j,j}=\omega_{0,0}=0, \text{ for }i\in J_1,\;  j\in J_3;
\end{align*}
and\begin{align*}
\omega_{i',j'}=-\omega_{j',i'}, \text{  for } j',i'\in J_1' \text{ or }J_3' \text{ with } j'>i' .
\end{align*}

When $i_3',i_3,j_3,j_3'\in J_3'$ and $i_1',i_1,j_1,j_1' \in J_1'$, we denote
\begin{align*}
\Lambda( z_{i_3,i_1}\omega_{j'_1,j_1} )
=  z_{i_3,i_1}\omega_{j'_1,j_1} - z_{i_3,j_1}\omega_{j'_1,i_1}+ z_{i_3,j'_1}\omega_{j_1,i_1},
\end{align*}
\begin{align*}
\Lambda( z_{i_3,i_1}\omega_{j'_3,j_3} )
=  z_{i_3,i_1}\omega_{j'_3,j_3} - z_{j'_3,i_1}\omega_{i_3,j_3}+ z_{j_3,i_1}\omega_{i_3,j'_3},
\end{align*}
\begin{align*}
\Lambda( \omega_{j'_1,j_1}\omega_{j'_3,j_3} )
=  \omega_{j_3',j_3} \omega_{j'_1,j_1} +z_{j'_3,j'_1}z_{j_3,j_1}- z_{j'_3,j_1}z_{j_3,j'_1},
\end{align*}
\begin{align*}
\Lambda( \omega_{j'_1,j_1}\omega_{i'_1,i_1} )
= \omega_{j'_1,j_1}\omega_{i'_1,i_1}- \omega_{j'_1,i_1}\omega_{i'_1,j_1}+ \omega_{j'_1,i'_1}\omega_{i_1,j_1},
\end{align*}
\begin{align*}
\Lambda( \omega_{j'_3,j_3}\omega_{i'_3,i_3} )
=   \omega_{j'_3,j_3}\omega_{i'_3,i_3}  + \omega_{j'_3,i'_3}\omega_{i_3,j_3} - \omega_{j'_3,i_3}\omega_{i'_3,j_3}.
\end{align*}

Let
\begin{align}  \begin{split}\label{b4.29}
\mfk R =\Big\{ &
\Lambda( z_{i_3,i_1}\omega_{j'_1,j_1} ),\Lambda( z_{i_3,i_1}\omega_{j'_3,j_3} ),
\Lambda( \omega_{j'_1,j_1}\omega_{j'_3,j_3} ) ,
\Lambda( \omega_{j'_1,j_1}\omega_{i'_1,i_1} ),
\Lambda( \omega_{j'_3,j_3}\omega_{i'_3,i_3} )    \mid   \\&
 \text{ } i_3',i_3,j_3,j_3'\in J_3'
\text{  and } i_1',i_1,j_1,j_1' \in J_1' ,
\Big\} .\end{split}   \end{align}
and let $\msr R=\la \mfk R \ra$ be an ideal of $\msr F$.

\begin{lemma}
\label{lem:4.4}
For $k\in\mbb N$, we have $\ker(\phi|_{\msr F_k})=\msr R\cap\msr F_k$. Consequently, $\ker\phi=\msr R$.\end{lemma}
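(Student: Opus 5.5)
The plan has two halves: first check that every relation in $\mfk R$ is killed by $\phi$, so that $\msr R\subseteq\ker\phi$; then prove the reverse inclusion by a straightening argument that rewrites every element of $\msr F_k$ modulo $\msr R$ as a combination of the standard monomials in $S_k^{\mathcal I_1,\mathcal I_3}$ and appeals to Lemma~\ref{lem:4.3}.

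\textbf{The inclusion $\msr R\subseteq\ker\phi$.} It is convenient to view $\phi$ through the skew matrix whose entries are the images of the $z$'s and $\omega$'s. With the conventions $\omega_{i,0}=x_i$, $\omega_{n+1,j}=-y_j$ and skew-symmetry, the images $\phi(\omega_{j,i})$, $i,j\in J_1'$, are the $2\times2$ minors $x_jy_i-x_iy_j$, extended by the border entries $x_i$. Each generator of $\mfk R$ is then one of two things. It is either a three-term Plücker (Pfaffian) relation among $2\times2$ minors of the $2\times(n_1+1)$ or $2\times(n-n_2+1)$ matrices built from $(x,y)$ with an added column. Or it is a Laplace-type expansion identity between the rank-one-type entries $\phi(z_{j,i})=y_iy_j-x_ix_j$ and the minors. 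I will verify the five families directly by expansion; this is routine. Since $\phi$ is graded (with $\msr F_k$ mapping into polynomials of total degree between $k$ and $2k$), it is enough to work in each $\msr F_k$. It is also enough to work in each fixed multidegree $(\mathcal I_1,\mathcal I_3)$, because both $\phi$ and the generators of $\mfk R$ are homogeneous for the $\msr I_1,\msr I_3$ multiset grading.

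\textbf{The reverse inclusion.} Fix $k$ and $(\mathcal I_1,\mathcal I_3)$. I will show that
\[
\msr F_k^{\mathcal I_1,\mathcal I_3}=V_k^{\mathcal I_1,\mathcal I_3}+\msr R\cap\msr F_k^{\mathcal I_1,\mathcal I_3}.
\]
Given this, if $f\in\ker\phi\cap\msr F_k$, write $f=g+r$ with $g\in V_k^{\mathcal I_1,\mathcal I_3}$ and $r\in\msr R$. Then $\phi(g)=0$, so $g=0$ by Lemma~\ref{lem:4.3}, and hence $f\in\msr R$.

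To prove the spanning statement, I will run a straightening induction. Order the monomials of $\msr F_k$ by a term order on the index sets $\msr I(f)\subset\mbb Z^2$, for example one that lexicographically increases the sum of squared coordinates; chain-containing configurations are non-maximal in such an order. Each forbidden configuration will be matched with a generator of $\mfk R$ that lets it be rewritten:
\begin{itemize}
\item a 3-chain among $z$-factors, together with the associated $x,y$ border factors, is resolved by the relations $\Lambda(\omega\omega)$ mixing $O_1$ and $O_3$;
\item an $\omega_1$-2-chain is resolved by $\Lambda(z\,\omega_1)$ or $\Lambda(\omega_1\omega_3)$;
\item an $\omega_1$-3-chain is resolved by $\Lambda(\omega_1\omega_1)$ together with $\Lambda(z\omega_1)$;
\item the $\omega_3$ configurations are handled symmetrically.
\end{itemize}
The leading term of each generator is exactly the chained monomial, and the other terms are strictly smaller. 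The chain definitions were built to mirror these leading terms, and the case $(k_2+k_5)k_4=0$ from the proof of Lemma~\ref{lem:4.2} appears here as a rewriting of a product $\omega_1\cdot(y\text{ or }\omega_3)$.

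The main obstacle is this straightening step: checking that every forbidden configuration is actually the leading monomial of some element of $\msr R$, and that the chosen order is well-founded and compatible with all five relation types at once. Mixed configurations, where a chain uses both $z$-indices and the shifted $\omega$-indices $(n+n_1+2-j,i)$ or $(i',n_2-j')$, are the delicate ones. My first attempt will be to reduce to the one-sided cases of \cite{ZX2}, which give Lemma~\ref{lem:4.1}: that paper presumably already proves straightening for $k_5=0$, and the $k_4=0$ case follows by symmetry. The mixed products $\omega_1\omega_3$ are then eliminated first using $\Lambda(\omega_{j_1',j_1}\omega_{j_3',j_3})$, which expresses $\omega_3\omega_1$ through $zz$ terms. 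This reduction brings every monomial into the span of the pieces treated in \cite{ZX2}, modulo $\msr R$.
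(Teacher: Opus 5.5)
Your outline has the same shape as the paper's proof. You show $\msr R\subseteq\ker\phi$ by expansion, remove mixed products ($\omega_1\omega_3$, $y_s\omega_1$, $x_t\omega_3$), straighten into $V_k^{\mathcal I_1,\mathcal I_3}$ modulo $\msr R$, and finish with Lemma~\ref{lem:4.3}. But the straightening is the entire content of the reverse inclusion, and you only flag it as ``the main obstacle''. The version you sketch also breaks at a specific point. Three of the five forbidden configurations (3-chains, $\omega_1$-3-chains, $\omega_3$-3-chains) are cubic monomials, whereas every element of $\mfk R$ is quadratic. So no generator can have such a monomial as its leading term, contrary to your claim.

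The missing idea is to build cubic elements of $\msr R$ with these leading terms. These are the signed sum $\Lambda(z_{j_1,i_1}z_{j_2,i_2}z_{j_3,i_3})$ over the six permutations of the $i$'s, the sum $\Lambda(z_{s_1,t_1}z_{s_2,t_2}\omega_{t_3,t_0})$ with $t_0\le t_1<t_2<t_3$, and its $\Omega_3$ mirror. You then have to prove they lie in $\msr R$ by explicit identities such as
\begin{align*}
\Lambda(z_{j_1,i_1}z_{j_2,i_2}z_{j_3,i_3})
={}&-\omega_{j_2,j_1}\Lambda(z_{j_3,i_1}\omega_{i_3,i_2})
+z_{j_3,i_3}\Lambda(\omega_{i_2,i_1}\omega_{j_2,j_1})\\
&-z_{j_3,i_2}\Lambda(\omega_{i_3,i_1}\omega_{j_2,j_1})
+z_{j_3,i_1}\Lambda(\omega_{i_3,i_2}\omega_{j_2,j_1}).
\end{align*}
The $zz\omega_1$ sum equals $z_{s_1,t_2}\Lambda(z_{s_2,t_0}\omega_{t_3,t_1})-z_{s_1,t_1}\Lambda(z_{s_2,t_0}\omega_{t_3,t_2})-z_{s_1,t_3}\Lambda(z_{s_2,t_0}\omega_{t_2,t_1})+z_{s_2,t_0}\Lambda(z_{s_1,t_1}\omega_{t_3,t_2})$. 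So the mixed relations reach the $z$-3-chain case only through such a cubic combination. Your assignment of relations is also off. $\Lambda(\omega_1\omega_1)$ is what removes the nested $\omega_1$-2-chains $\omega_{p,q}\omega_{r,t}$ with $q<t<r<p$, which your list omits, and it plays no role for $\omega_1$-3-chains.

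The proposed order does not work either. The sum of squared coordinates is constant on all six terms of $\Lambda(zzz)$, and it takes the same value on $z_{s,i}\omega_{p,q}$ and $z_{s,q}\omega_{p,i}$. Hence it cannot single out the chained term. The paper instead compares, lexicographically, the sorted lists of pairs in $\msr I(h)$. This comparison is preserved under multiplication by a fixed monomial, and it makes each chained term strictly smaller than every other term of its relation. It also ranges over a finite set, so the rewriting terminates. You must also check two further points. First, inside the $\Omega_1$-type monomials the rewriting never reintroduces a $y_s$ or $\Omega_3$ factor. Second, the $\Omega_3$ side follows by the reflection $u\mapsto n+1-u$, $x\leftrightarrow y$.

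Falling back on \cite{ZX2} with ``presumably'' does not close the gap. The paper takes only the linear independence (Lemma~\ref{lem:4.1}) from there and carries out the straightening itself. Finally, the fact that $\msr F_k$ maps into degrees between $k$ and $2k$ does not make $\phi$ graded. So it does not pass from the degree-$k$ statements to $\ker\phi=\msr R$. Give weight one to $x_i$ ($i\in J_1$) and $y_j$ ($j\in J_3$) and weight zero to all other variables. Then every generator of $\msr F$ maps to weight one, so $\phi$ is graded.
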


\begin{proof}

For distinct $j_1,j_2,j_3 \in J_3'$ and distinct $i_1,i_2,i_3  \in J_1'$, we denote
\begin{align*}  \begin{split}
\Lambda(z_{j_1,i_1}z_{j_2,i_2}z_{j_3,i_3})=
&z_{j_1,i_1}z_{j_2,i_2}z_{j_3,i_3}+z_{j_1,i_2}z_{j_2,i_3}z_{j_3,i_1}+z_{j_1,i_3}z_{j_2,i_1}z_{j_3,i_2} \\
&-z_{j_1,i_1}z_{j_2,i_3}z_{j_3,i_2}-z_{j_1,i_2}z_{j_2,i_1}z_{j_3,i_3}
 -z_{j_1,i_3}z_{j_2,i_2}z_{j_3,i_1}.
\end{split}   \end{align*}

For $n_2+1\leq s_1<s_2\leq n+1$ and $1\leq t_0\leq t_1<t_2<t_3\leq n_1$, we denote
\begin{align*}  \begin{split}
\Lambda(z_{s_1,t_1}z_{s_2,t_2}\omega_{t_3,t_0})=
&z_{s_1,t_1}z_{s_2,t_2}\omega_{t_3,t_0}+z_{s_1,t_2}z_{s_2,t_3}\omega_{t_1,t_0}+z_{s_1,t_3}z_{s_2,t_1}\omega_{t_2,t_0} \\
&-z_{s_1,t_3}z_{s_2,t_2}\omega_{t_1,t_0}-z_{s_1,t_1}z_{s_2,t_3}\omega_{t_2,t_0}
 -z_{s_1,t_2}z_{s_2,t_1}\omega_{t_3,t_0}.
\end{split}   \end{align*}
For $0\leq t'_1< t'_2 \leq n_1$ and $n_2+1 \leq s'_0 < s'_1<s'_2 \leq s'_3 \leq n$, we denote
\begin{align*}  \begin{split}
\Lambda(z_{s'_1,t'_1}z_{s'_2,t'_2}\omega_{s'_3,s'_0})=
&z_{s'_1,t'_1}z_{s'_2,t'_2}\omega_{s'_3,s'_0}+z_{s'_0,t'_1}z_{s'_1,t'_2}\omega_{s'_3,s'_2}+ z_{s'_2,t'_1}z_{s'_0,t'_2}\omega_{s'_3,s'_1} \\
&-z_{s'_2,t'_1}z_{s'_1,t'_2}\omega_{s'_3,s'_0}
-z_{s'_0,t'_1}z_{s'_2,t'_2}\omega_{s'_3,s'_1}
 -z_{s'_1,t'_1}z_{s'_0,t'_2}\omega_{s'_3,s'_2}.
\end{split}   \end{align*}

Let
\begin{align*}  \begin{split}
\mfk R_0 =\mfk R  \cup \Big\{ &
\Lambda(z_{j_1,i_1}z_{j_2,i_2}z_{j_3,i_3}),\Lambda(z_{s_1,t_1}z_{s_2,t_2}\omega_{t_3,t_0}),
\Lambda(z_{s'_1,t'_1}z_{s'_2,t'_2}\omega_{s'_3,s'_0})
\mid\\
&\text{ distinct }  j_1,j_2,j_3  \in J_3',
\text{  and distinct } i_1,i_2,i_3   \in J_1'\\
& n_2+1\leq s_1<s_2\leq n+1 ,1\leq t_0\leq t_1<t_2<t_3\leq n_1,  \\
&0\leq t'_1< t'_2 \leq n_1  ,n_2+1 \leq s'_0 < s'_1<s'_2 \leq s'_3 \leq n
\Big\} ,\end{split}   \end{align*}
and $\msr R_0=\la \mfk R_0 \ra$.

Since
\begin{align*}  \begin{split}
\Lambda(z_{j_1,i_1}z_{j_2,i_2}z_{j_3,i_3})=
&-\omega_{j_2,j_1}\Lambda(z_{j_3,i_1}\omega_{i_3,i_2})
+z_{j_3,i_3}\Lambda(\omega_{i_2,i_1}\omega_{j_2,j_1}) \\
&-z_{j_3,i_2}\Lambda(\omega_{i_3,i_1}\omega_{j_2,j_1})
+z_{j_3,i_1}\Lambda(\omega_{i_3,i_2}\omega_{j_2,j_1}) ,
\end{split}   \end{align*}
\begin{align*}  \begin{split}
\Lambda(z_{s_1,t_1}z_{s_2,t_2}\omega_{t_3,t_0}) =
&z_{s_1,t_2}\Lambda(z_{s_2,t_0}\omega_{t_3,t_1})
-z_{s_1,t_1}\Lambda(z_{s_2,t_0}\omega_{t_3,t_2})\\
&-z_{s_1,t_3}\Lambda(z_{s_2,t_0}\omega_{t_2,t_1})
+z_{s_2,t_0}\Lambda( z_{s_1,t_1}\omega_{t_3,t_2}) ,
\end{split}   \end{align*}
and
\begin{align*}  \begin{split}
\Lambda(z_{s'_1,t'_1}z_{s'_2,t'_2}\omega_{s'_3,s'_0})=
&z_{s_0',t_1'}\Lambda(z_{s'_3,t'_2}\omega_{s'_2,s_1'})
+z_{s_2',t_1'}\Lambda(z_{s'_3,t'_2}\omega_{s'_1,s_0'})\\
&-z_{s_1',t_1'}\Lambda(z_{s'_3,t'_2}\omega_{s'_2,s_0'})
-z_{s_3',t_2'}\Lambda(z_{s'_2,t'_1}\omega_{s'_1,s_0'}),
\end{split}   \end{align*}
we have $\msr R_0 =\msr R$.

It can be verified that $\phi(\msr R_0)=\{0\}$ and then $\msr R_0\subseteq\ker\phi$.
We only need to show that $\ker(\phi|_{\msr F_k})\subseteq\msr R_0$.

Suppose that $f=\sum_i f_i\in\ker(\phi|_{\msr F_k})$, where the $f_i$ are scalar multiples of monomials.
Since $\phi$ preserves the index multisets $\msr I_1$ and $\msr I_3$, we may consider each pair of multisets separately.
Thus we may assume that there exist ${\cal I}_1\in\widehat J_1^{m_1}$ and ${\cal I}_3\in\widehat J_3^{m_2}$ such that
$\msr I_1(f_i)={\cal I}_1$ and $\msr I_3(f_i)={\cal I}_3$ for each $i$.
All the reductions below preserve these multisets and the degree $k$ in $\msr F$.

First we remove products containing both types of $\omega$ variables. The quadratic relations in $\mfk R$ give
\begin{align*}
\omega_{j_3',j_3}\omega_{j_1',j_1}
&\equiv-z_{j_3',j_1'}z_{j_3,j_1}+z_{j_3',j_1}z_{j_3,j_1'},\\
y_s\omega_{p,q}&\equiv z_{s,q}x_p-z_{s,p}x_q,\\
x_t\omega_{p',q'}&\equiv z_{q',t}y_{p'}-z_{p',t}y_{q'}
\pmod{\msr R_0},
\end{align*}
where $p,q,t\in J_1$, $s,p',q'\in J_3$ and the indices in each $\omega$ factor are distinct.
Each replacement decreases the total number of $\omega$ factors.
Consequently, after finitely many replacements, every monomial contains either only $z$ variables, only $\Omega_1$ and $z$ variables with no $y_s$ factor, or only $\Omega_3$ and $z$ variables with no $x_t$ factor.

We next reduce the first two types, using the same index order as in the chain argument.
For a monomial $h$, arrange the pairs in $\msr I(h)$ in increasing lexicographic order, with repetitions retained, and compare the resulting lists lexicographically.
Multiplication by a fixed monomial preserves this comparison. There are only finitely many such lists for monomials of degree $k$.
The following replacements strictly increase the list whenever the term on the left occurs:
\begin{align*}
z_{s,i}\omega_{p,q}
&\equiv z_{s,q}\omega_{p,i}-z_{s,p}\omega_{q,i}
&& (1\leq i<q<p\leq n_1),\\
\omega_{p,q}\omega_{r,t}
&\equiv\omega_{p,t}\omega_{r,q}-\omega_{p,r}\omega_{t,q}
&& (1\leq q<t<r<p\leq n_1).
\end{align*}
Here the first congruence holds for $s\in J_3'$, and both congruences are modulo $\msr R_0$.
The first term in each congruence contains an $\omega_1$-2-chain.
Conversely, every $\omega_1$-2-chain in these two types of monomials has one of these forms.

For $j_1<j_2<j_3$ and $i_1<i_2<i_3$, the first term
$z_{j_1,i_1}z_{j_2,i_2}z_{j_3,i_3}$ in
$\Lambda(z_{j_1,i_1}z_{j_2,i_2}z_{j_3,i_3})$
has a strictly smaller list than each of the other nonzero terms.
The same is true of the first term
$z_{s_1,t_1}z_{s_2,t_2}\omega_{t_3,t_0}$ in
$\Lambda(z_{s_1,t_1}z_{s_2,t_2}\omega_{t_3,t_0})$,
where $1\leq t_0\leq t_1<t_2<t_3\leq n_1$.
These two cubic relations therefore remove, respectively, 3-chains consisting of $z$ factors and $\omega_1$-3-chains.
A 3-chain involving an $\Omega_1$ factor already contains an $\omega_1$-2-chain.
Solving each relation for its first term and continuing the reductions must terminate, since the lists strictly increase in a finite set.
These replacements do not introduce an $\Omega_3$ factor or a $y_s$ factor into a monomial of the second type.
We obtain only monomials free of $\Omega$-chains.

The third type is treated in the same way, with the two sets of outer indices interchanged.
More explicitly, apply the reflection $u\mapsto n+1-u$ to the indices and interchange $x$ and $y$.
It sends $z_{j,i}$ to $z_{n+1-i,n+1-j}$, interchanges the two types of $\omega$ variables up to sign, and replaces $(n_1,n_2)$ by $(n-n_2,n-n_1)$.
The $\omega_3$-2-chains and $\omega_3$-3-chains become the corresponding $\omega_1$-chains.
Thus the reflected lexicographic list gives the same terminating reduction, using
$\Lambda(z_{i_3,i_1}\omega_{j_3',j_3})$,
$\Lambda(\omega_{j_3',j_3}\omega_{i_3',i_3})$ and the third cubic family above.
This also produces only monomials free of $\Omega$-chains.

We have therefore obtained a polynomial $f'$ such that
\[
f\equiv f'\pmod{\msr R_0},\qquad
f'\in V_k^{{\cal I}_1,{\cal I}_3}.
\]
Since $\phi(\msr R_0)=\{0\}$, we have $0=\phi(f)=\phi(f')$.
By Lemma~\ref{lem:4.3}, $f'=0$; that is, $f\in\msr R_0=\msr R$.
Hence $\ker(\phi|_{\msr F_k})\subseteq\msr R\cap\msr F_k$, and the reverse inclusion has already been proved.
Finally, assign weight one to $x_i$ for $i\in J_1$ and $y_j$ for $j\in J_3$, and weight zero to the remaining variables of $\msr A$.
Every generator of $\msr F$ has image of weight one.
Thus $\phi$ is graded for this weight and the usual grading of $\msr F$, and the assertions for all $k$ give $\ker\phi=\msr R$.
\end{proof}
\pse

\section{Associated Varieties and the Pfaffian Variety}

We now compute the annihilator of the associated graded module and identify its zero set. Throughout this section, $M=\msr H_{\la\ell\ra}$, $\mfk g=\mfk o(2n)$ and $\ell\leq0$. We first assume $n_1<n_2$ and use $M_0=V_0$ from Section 3; the case $n_1=n_2$ is treated in the proof of Theorem~\ref{the:5.1}.

For $p\geq1$, put
\begin{equation*}
\mfk k_{(p)}=\{\xi\in U_p(\mfk g):\xi(M_k)\subseteq M_{k+p-1}\text{ for every }k\geq0\}.
\end{equation*}
The associated graded space $\overline M$ in \eqref{1.11} is an $S(\mfk g)$-module. Write $S_{(p)}(\mfk g)=S^p(\mfk g)=U_p(\mfk g)/U_{p-1}(\mfk g)$ and
\begin{equation*}
I_{(p)}=\operatorname{Ann}_{S(\mfk g)}(\overline M)\cap S_{(p)}(\mfk g).
\end{equation*}
Then $\operatorname{Ann}_{S(\mfk g)}(\overline M)=\bigoplus_{p\geq1}I_{(p)}$.

For convenience, we introduce the notation
\begin{align}\label{1.15}
\td{E}_{i,j} =\left\{\begin{array}{ll}
E_{i,j}-E_{n+j,n+i}  &\mbox{if}\;
i,j \in \ol{1,n } ;\\
E_{i,j}-E_{j-n,n+i}  &\mbox{if}\;  i \leq n<j ;\\
E_{i,j}-E_{n+j,i-n}  &\mbox{if}\;   j\leq n<i ;\\
E_{i,j}-E_{j-n,i-n}  &\mbox{if}\; i,j>n,
\end{array}\right.
\end{align}
for $i,j\in \ol{1,2n}$.
For the coordinate indices, put
\[
A=J_1\cup(n+J_3),\qquad B=J_2\cup(n+J_2),\qquad C=J_3\cup(n+J_1).
\]
Thus $A,B,C$ partition $\ol{1,2n}$, and $\theta$ exchanges $A$ and $C$ and preserves $B$.
Define the sets
\begin{align*}
L_0=(J_3\times J_1 ) \cup (J_3\times (J_3+n) ) \cup ((J_1+n)\times J_1 ) \cup ((n+J_1)\times (n+J_3)) ,
\end{align*} 
\begin{align}\begin{split}\label{1.17}
L =&L_0 \cup (J_3 \times J_2 ) \cup (J_2\times J_1 ) \cup ( J_2 \times (J_3+n) ) \\
&\cup (J_3\times(J_2+n))\cup((J_2+n)\times J_1)\cup((J_1+n)\times J_2)\\ &\cup((J_2+n)\times(J_3+n))\cup((J_1+n)\times(J_2+n)),
\end{split}\end{align}
 and $L^c=(\{1,2, \dots, 2n  \} \times \{  1,2, \dots, 2n  \} ) \setminus L$.

\begin{lemma}\label{lem:5.1}
For $p\geq1$, we have
\begin{align}\label{5.6}
 I_{(p)}   =  \left\lbrace \eta+U_{p-1}( \mathfrak{g} ) \in S_{(p)} \left( \mathfrak{g} \right)  | \eta( M_0) \subseteq M_{p-1}  \right\rbrace. \end{align}
\end{lemma}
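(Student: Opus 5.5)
We show both inclusions. Write $\bar\eta=\eta+U_{p-1}(\mfk g)$ for the symbol of $\eta\in U_p(\mfk g)$. By definition of the $S(\mfk g)$-action on $\overline M$, $\bar\eta$ sends $v+M_{k-1}\in\overline M_k$ to $\eta(v)+M_{k+p-1}\in\overline M_{k+p}$. Here we use $\eta(M_k)\subseteq M_{k+p}$, which holds because $M_k=U_k(\mfk g)(M_0)$. Hence $\bar\eta\in I_{(p)}$ if and only if $\eta(M_k)\subseteq M_{k+p-1}$ for every $k\geq0$, that is, if and only if $\eta\in\mfk k_{(p)}$. This characterization does not depend on the representative $\eta$, since $U_{p-1}(\mfk g)(M_k)\subseteq M_{k+p-1}$. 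So the lemma amounts to showing that $\eta\in\mfk k_{(p)}$ if and only if $\eta(M_0)\subseteq M_{p-1}$.

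The forward implication is the case $k=0$. For the reverse implication, take $\eta\in U_p(\mfk g)$ with $\eta(M_0)\subseteq M_{p-1}$. We prove $\eta(M_k)\subseteq M_{k+p-1}$ by induction on $k$. Since $M_k=M_{k-1}+\mfk g(M_{k-1})$, it suffices to treat vectors $\xi v$ with $\xi\in\mfk g$ and $v\in M_{k-1}$. We write
\[
\eta\xi v=\xi\eta v+[\eta,\xi]v .
\]
By the induction hypothesis $\eta v\in M_{k+p-2}$, so $\xi\eta v\in M_{k+p-1}$. The commutator $[\eta,\xi]$ lies in $U_p(\mfk g)$, so a priori it only satisfies $[\eta,\xi](M_{k-1})\subseteq M_{k+p-1}$. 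This bound is exactly what we need, so the commutator term is harmless.

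Note that this argument does not use the hypothesis on $\eta$ for the commutator term. The hypothesis enters only at the base case and through the term $\xi\eta v$. Consequently, on $\overline M$ we get $\bar\eta\,\overline{\xi v}=\bar\xi\,\bar\eta\,\bar v$ plus a term in $M_{k+p-1}$. In other words, the annihilator of the cyclic generating space $\overline M_0$ is automatically the annihilator of all of $\overline M$, because $\overline M$ is generated over the commutative algebra $S(\mfk g)$ by $\overline M_0$. I would present the proof in this cleaner form. Since $\overline M=S(\mfk g)\cdot\overline M_0$ and $S(\mfk g)$ is commutative, an element $\bar\eta$ kills $\overline M$ if and only if it kills $\overline M_0$. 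Killing $\overline M_0=M_0$ in degree $p$ means precisely $\eta(M_0)\subseteq M_{p-1}$.

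The main point requiring care is the claim $\overline M=S(\mfk g)\overline M_0$. We check it degreewise: every element of $\overline M_k$ is a sum of classes of $u_1\cdots u_k v$ with $u_i\in\mfk g$ and $v\in M_0$, and this class equals $\bar u_1\cdots\bar u_k\cdot\bar v$. Together with the well-definedness remarks above, this completes the plan.
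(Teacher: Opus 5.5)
Your proof is correct and is essentially the paper's argument. The paper moves $\eta$ past all of $U_k(\mfk g)$ in one step using $[U_p(\mfk g),U_k(\mfk g)]\subseteq U_{p+k-1}(\mfk g)$, whereas you do it one generator at a time via $[\eta,\xi]\in U_p(\mfk g)$ (equivalently, commutativity of $S(\mfk g)$ together with $\overline M=S(\mfk g)\overline M_0$), and both arguments get the converse from the case $k=0$.
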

\begin{proof}
Suppose that $ \eta+U_{p-1}( \mathfrak{g} ) \in S_{(p)} \left( \mathfrak{g} \right)$ and $  \eta( M_0) \subseteq M_{p-1}$.
Then
\begin{align*}\begin{split}
 \eta(M_k) +M_{k+p-1}&\subseteq   \eta \left(  U_k \left( \mathfrak{g} \right) (M_0)\right) +M_{k+p-1}\\
&\subseteq    U_k \left( \mathfrak{g} \right)\eta( M_0)  +M_{k+p-1}
\subseteq M_{k+p-1}.
\end{split}\end{align*}
Here $[U_p(\mfk g),U_k(\mfk g)]\subseteq U_{p+k-1}(\mfk g)$. Hence the symbol of $\eta$ belongs to $I_{(p)}$. Conversely, if the symbol of $\eta\in U_p(\mfk g)$ annihilates $\overline M$, its action on $\overline M_0=M_0$ gives $\eta(M_0)\subseteq M_{p-1}$.
Therefore, (\ref{5.6}) holds.

\end{proof}

For $(j,i)\notin L$, the formulas \eqref{3.1}--\eqref{3.5} show that
\begin{equation*}
\widetilde E_{j,i}(M_0)\subseteq M_0.
\end{equation*}
Consequently these elements belong to $I_{(1)}$ by Lemma~\ref{lem:5.1}. Conversely, the quadratic multiplication parts of the remaining independent generators are linearly independent polynomials. Applying a nonzero linear combination to a nonzero element of $M_0$, which is homogeneous of ordinary degree $-\ell$, gives a nonzero term of degree $2-\ell$. Hence
\begin{equation}\label{5.11}
I_{(1)}=\operatorname{Span}\{\widetilde E_{j,i}:(j,i)\notin L\}.
\end{equation}
The last two rectangles in \eqref{1.17} include the partners required by $\widetilde E_{i,j}=-\widetilde E_{\theta(j),\theta(i)}$, where $\theta(i)=n+i$ for $i\leq n$ and $\theta(i)=i-n$ for $i>n$.

Set
\begin{align}\begin{split} \label{a1.18}
\Delta_{i_1,i_2}^{j_1,j_2}=\td{E}_{j_1,i_1} \td{E}_{j_2,i_2}-\td{E}_{j_1,i_2} \td{E}_{j_2,i_1}\end{split}\end{align}
for $i_1,i_2, j_1,j_2 \in\ol{1,2n}$,
and
\begin{align*}
I_{\Delta_2}= \{ \Delta_{i_1,i_2}^{j_1,j_2} \mid (j_1,i_1),(j_2,i_2),(j_2,i_1),(j_1,i_2)\in L\setminus L_0 \} .\end{align*}

For $ j_1, i_1 \in J_1$ and $(s,t)\in L$, set
\begin{align}  \label{a1.19}
\ol{\Delta}^{n+j_1,s}_{i_1,t} =
\left\{\begin{array}{ll}
 \td{E}_{s,t}\td{E}_{n+j_1,i_1} -\td{E}_{s, j_1}\td{E}_{t-n,i_1} +\td{E}_{s,i_1}\td{E}_{t-n,j_1}   &\mbox{if}\;
t>n ;\\
  \td{E}_{s,t}\td{E}_{n+j_1,i_1} -\td{E}_{s,i_1}\td{E}_{n+j_1,t}+\td{E}_{s,j_1}\td{E}_{n+i_1,t} &\mbox{if}\;  t\leq n .
\end{array}\right.
\end{align}

For $ j_3, i_3 \in J_3$ and $(s,t)\in L$, define
\begin{align}    \label{a1.20}
\ol{\Delta}^{ j_3,s}_{n+i_3,t} =
\left\{\begin{array}{ll}  \td{E}_{ j_3,n+i_3}\td{E}_{s,t} -\td{E}_{ j_3,s-n}\td{E}_{ i_3,t}+  \td{E}_{i_3,s-n } \td{E}_{ j_3,t} &\mbox{if}\;
s>n ;\\
 \td{E}_{ j_3,n+i_3}\td{E}_{s,t} - \td{E}_{s,n+i_3}\td{E}_{ j_3,t}+\td{E}_{s,n+j_3}\td{E}_{ i_3,t} &\mbox{if}\;  s\leq n .
\end{array}\right.
\end{align}

The products in \eqref{a1.18}--\eqref{a1.20} are understood as symbols in $S(\mfk g)$. An ordered product in $U(\mfk g)$ is used when it is applied to a vector.

For $v_0\in M_0$, we have
\[
\partial_{x_i}(v_0)=\partial_{y_j}(v_0)=0
\qquad(i\in J_2\cup J_3,\ j\in J_1\cup J_2).
\]
Put $F_{ab}=\widetilde E_{\theta(a),b}=-F_{ba}$ and $g_{ab}=\delta_{a,\theta(b)}$. For $a,b\in A$ and $c,d\in B$, direct calculation gives
\begin{align}\label{eq:block-minors}
F_{ac}F_{bd}-F_{ad}F_{bc}&=F_{ab}F_{cd},\nonumber\\
F_{ca}F_{db}-F_{cb}F_{da}&=F_{ab}(F_{cd}-g_{cd}).
\end{align}
Since $F_{cd}(v_0)=0$, the first of these minors kills $M_0$, while the second sends $M_0$ into $M_1$. The two orientations give the same minors, up to sign, in the symmetric algebra. Consequently, Lemma~\ref{lem:5.1} gives
\begin{equation}\label{5.15}
I_{\Delta_2}\subseteq I_{(2)}.
\end{equation}

We also record the ordered identity underlying the three-term relations:
\begin{equation}\label{eq:ordered-pf}
F_{ab}F_{cd}-F_{ac}F_{bd}+F_{ad}F_{bc}
=g_{ab}F_{cd}-g_{ac}F_{bd}+g_{ad}F_{bc}.
\end{equation}
Before the Fourier transformation, substitute
$F_{ab}=z_{\theta(a)}\partial_{z_b}-z_{\theta(b)}\partial_{z_a}$.
The terms with two derivatives cancel, and the terms with one derivative give the right-hand side. The Fourier transformation preserves this identity. The first identity in \eqref{eq:block-minors} follows because all pairings between $A$ and $A\cup B$ vanish; the second follows by one commutator. This verification also applies in Section 6 with $\theta(0)=0$.

In the displayed orderings of \eqref{a1.19}--\eqref{a1.20}, the contraction terms in \eqref{eq:ordered-pf}, and the commutators needed to arrange the factors, vanish for $(s,t)\in L$. Thus these formulas give
\[
\overline\Delta^{n+j_1,s}_{i_1,t}
=\overline\Delta^{j_3,s}_{n+i_3,t}=0
\]
as operators. Their symbols therefore belong to $I_{(2)}$.

\begin{lemma}\label{lem:5.2}
Let $J$ be the homogeneous ideal generated by $I_{(1)}$, $I_{\Delta_2}$ and the quadrics $\overline\Delta^{n+j_1,s}_{i_1,t}$, $\overline\Delta^{j_3,s}_{n+i_3,t}$, where $i_1,j_1\in J_1$, $i_3,j_3\in J_3$ and $(s,t)\in L$. Then
\begin{equation}\label{5.20}
I_{(p)}=J\cap S_{(p)}(\mfk g)\qquad(p\geq1).
\end{equation}
\end{lemma}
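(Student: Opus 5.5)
The inclusion $J\cap S_{(p)}(\mfk g)\subseteq I_{(p)}$ is already in hand. By \eqref{5.11}, \eqref{5.15} and the operator identities \eqref{a1.19}--\eqref{a1.20}, every generator of $J$ lies in $\operatorname{Ann}_{S(\mfk g)}(\overline M)$, and this annihilator is a graded ideal. So the work is the reverse inclusion $I_{(p)}\subseteq J$. The plan is to compare the dimensions of $S_{(p)}(\mfk g)/J_{(p)}$ and $S_{(p)}(\mfk g)/I_{(p)}$ through an explicit model of $\overline M$.

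\textbf{Step 1: reduce to $M_0$.} By Lemma~\ref{lem:5.1}, $I_{(p)}$ is the kernel of the map
\[
S_{(p)}(\mfk g)\rightarrow \operatorname{Hom}(M_0,\overline M_p),\qquad \eta\mapsto \bigl(v\mapsto \eta(v)+M_{p-1}\bigr).
\]
Modulo $I_{(1)}$, only the generators $\widetilde E_{j,i}$ with $(j,i)\in L$ survive. Split them into two kinds. The first kind acts by multiplication by an element of $W$; these are the pairs in $L_0$, and they correspond to the variables $z_{j,i}$ with $j\in J_3$, $i\in J_1$ and to the $\omega$'s. The second kind raises $\mfk d_1$ by $1$ or $2$ without being a pure multiplication; these are the pairs in $L\setminus L_0$. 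Using Proposition~\ref{prop:even-filtration}, $\overline M_p\cong V_p/V_{p-1}=\bigoplus_i TN(p-i)W^i$ modulo lower terms. I then define a surjective graded map from $S(\mfk g)/J$ onto $\overline M\otimes M_0^*$-type data, or more concretely a map
\[
\Psi:S(\mfk g)/J\rightarrow \operatorname{Hom}(M_0,\overline M).
\]

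\textbf{Step 2: a spanning set for $S(\mfk g)/J$.} The minors in $I_{\Delta_2}$ are rank-one relations among the $L\setminus L_0$ generators; they act on $M_0$ as $\partial$-free multiplications by monomials in the $x_r$ ($r\in J_2\cup J_3$) and $y_s$ ($s\in J_1\cup J_2$), which matches Case~1 of Proposition~\ref{prop:even-filtration}. The three-term quadrics $\overline\Delta$ exchange products of an $L_0$ generator with an $L\setminus L_0$ generator. The generators of $L_0$, together with those $L\setminus L_0$ generators landing in $x_i,y_j$ (the pieces $\omega_{i,0}=x_i$, $\omega_{n+1,j}=-y_j$ at the boundary), satisfy exactly the relations $\mfk R$ after substituting $\phi$. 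I therefore expect every monomial of degree $p$ in $S(\mfk g)/J$ to reduce to a combination of products $m\cdot h$. Here $m$ is a standard monomial in the $L\setminus L_0$ variables (one per $\mfk d_1$-weight pattern, via the determinantal straightening in \cite{ZX1}), and $h\in\msr F$ is a monomial free of $\Omega$-chains, obtained as in the proof of Lemma~\ref{lem:4.4}. This step is a straightening argument using a lexicographic order on index lists.

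\textbf{Step 3: injectivity on the spanning set.} Evaluate the reduced spanning elements on $M_0$. Under $T$ and the top $\mfk d_1$-component, their images are leading terms of the form (monomial in $J_2$-type variables)$\cdot\phi(h)\cdot v_0$. Lemma~\ref{lem:4.3} gives linear independence of the $\phi(h)$ with fixed index multisets $\mathcal I_1,\mathcal I_3$. Independence of the $m$-parts follows from the distinct monomials in the $J_2,J_3$-variables together with \cite{ZX1}. Lemma~\ref{lem:2.2}, applied with $d=\mfk d_1+\mfk l$, is used to separate the filtration levels, so that nonzero leading terms do not fall into $M_{p-1}$. Hence the spanning set maps injectively, which gives $\dim S_{(p)}/J_{(p)}\le \dim S_{(p)}/I_{(p)}$. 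Combined with $J_{(p)}\subseteq I_{(p)}$, this yields \eqref{5.20}.

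\textbf{Main obstacle.} The hardest part is Step~2: the straightening must use only the quadrics in $J$, while the mixed interaction between the $L\setminus L_0$ generators and the multiplication generators has to be controlled. My first attempt would be to handle the mixed interaction by filtering with the number of $L_0$ factors, and to straighten each graded piece separately using the identities \eqref{eq:block-minors} and \eqref{eq:ordered-pf}.
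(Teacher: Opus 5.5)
Your reduction is logically equivalent to the paper's. Exhibiting a spanning set of $S_{(p)}(\mfk g)/J_{(p)}$ whose images in $\operatorname{Hom}(M_0,\overline M_p)$ are independent amounts to showing that every symbol killed by the top-degree evaluation lies in $J$. Two steps fail as you formulate them.

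\textbf{Step~3 uses the wrong grading.} The top $\mfk d_1$-component does not isolate the quadratic multiplication parts, because the derivative parts of the generators in $L\setminus L_0$ raise $\mfk d_1$ by the same amount. For $i_1,j_1\in J_1$, $i_2\in J_2$ and $v_0\in M_0$,
\[
\widetilde E_{n+j_1,i_2}\widetilde E_{i_2,i_1}(v_0)
=(y_{j_1}\partial_{x_{i_2}}+x_{j_1}y_{i_2})(-x_{i_2}x_{i_1}v_0)
=-y_{j_1}x_{i_1}v_0-x_{j_1}y_{i_2}x_{i_2}x_{i_1}v_0,
\]
and both terms have $\mfk d_1$-weight $2$. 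Nor does $\mfk d_1$ separate the filtration levels. When $W\neq\emptyset$, $M_{p-1}\supseteq TN_0W^{p-1}$ reaches $\mfk d_1$-weight $2p-2$, while a product of $p$ generators from $L\setminus L_0$ raises $\mfk d_1$ by only $p$. Lemma~\ref{lem:2.2} cannot repair this. It only says that the maximal degree increases strictly from $M_{p-1}$ to $M_p$, whereas you need the specific leading terms to exceed every degree occurring in $M_{p-1}$. The paper uses ordinary degree instead. Each generator raises it by at most $2$, so $M_{p-1}$ lies in degree $\leq-\ell+2p-2$, and the degree $-\ell+2p$ part of $\eta(v)$ is exactly the product of the quadratic multiplication parts applied to $v$.

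\textbf{Step~2 is the real gap, and your spanning set is misspecified.} The $\Omega$-chain condition must be imposed on the combined $\msr F$-monomial. This includes the factors $x_i$ $(i\in J_1)$ and $y_j$ $(j\in J_3)$ carried by the $L\setminus L_0$ generators, which contribute the pairs $(n+1,i)$ and $(j,0)$ to $\msr I$. Your description straightens $m$ and $h$ separately, which does not work. Take $s\in J_3$, $q<t$ in $J_1$ and a middle variable $c$. Then $(y_sc)\,\omega_{t,q}$, $(x_tc)\,z_{s,q}$ and $(x_qc)\,z_{s,t}$ all lie in your set, yet $\phi(y_s\omega_{t,q}-x_tz_{s,q}+x_qz_{s,t})=0$ makes their symbols dependent, so Step~3 fails for that set.

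The correct set takes one fixed lift for each chain-free combined monomial and each multiset of middle variables. Proving that it spans requires realizing every reduction in the proof of Lemma~\ref{lem:4.4} inside $S(\mfk g)/J$. A relation of $\msr R$ containing $x_i$ or $y_j$ lies in $J$ only after multiplication by the middle variable attached to that factor. The middle variables must then be reassigned using the minors in $I_{\Delta_2}$. This is the coefficient-lifting argument at the end of the paper's proof. Grading by the number of $L_0$ factors does not help, since every generator of $J$ is already homogeneous for that count.

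The paper avoids straightening in $S(\mfk g)$ altogether. It groups the top-degree symbol of an annihilator element by middle monomials, obtains $\phi(h')=0$, and applies Lemma~\ref{lem:4.4} in $\msr F$. It then only has to lift the generator types of $\mfk R$ listed in Appendix~\ref{app:relations}.
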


\begin{proof}

The inclusion $J\subseteq\operatorname{Ann}_{S(\mfk g)}(\overline M)$ follows from \eqref{5.11}, \eqref{5.15} and the three-term operator identities above. We prove the other inclusion by the highest degree terms of the oscillator operators.

For comparison with Section 2, the auxiliary weight is
\[
\mfk d_2(x^\alpha y^\beta)=-\sum_{i\in J_3}\alpha_i-\sum_{i\in J_1}\beta_i.
\]
The elements of $W$ have $\mfk d_1$-weight $2$ and $\mfk d_2$-weight $-1$, so Lemma~\ref{lem:2.2} applies whenever $W$ is nonempty. On a monomial of signed degree $\ell$, ordinary degree equals $2(\mfk d_1+\mfk d_2)-\ell$. We use ordinary degree explicitly in the following comparison.

Modulo $\langle I_{(1)}\rangle$, choose a representative of a homogeneous annihilator symbol as follows.
Let $\xi \in \mfk k_{(p)}$ and write
\begin{align}\label{a5.19}
\xi =  \sum_{l=1}^m c_l\left(
\prod_{r=1}^7 \left(\prod_{s=1}^{k_{r,l}} \td{E}_{j_{r,l,s},i_{r,l,s}}\right)
\right) ;
\end{align}
where $m,k_{1,l},\dots,k_{7,l}\in\mbb N$ satisfy $\sum_{r=1}^7 k_{r,l}=p$, and
 \begin{align}\begin{split}\label{a5.20}
&(j_{1,l,s},i_{1,l,s})\in J_2\times J_1,\ (j_{2,l,s},i_{2,l,s})\in J_3\times J_2,\ (j_{3,l,s},i_{3,l,s})\in J_3\times J_1,\\ &
(j_{4,l,s},i_{4,l,s})\in (n+ J_1)\times J_1,\ (j_{5,l,s},i_{5,l,s})\in (n+J_1)\times J_2,\ \\& (j_{6,l,s},i_{6,l,s})\in J_3\times(n+ J_3),
(j_{7,l,s},i_{7,l,s})\in J_2\times(n+ J_3),\,c_l\in \mathbb{F} ,\quad \ \\&
 \text{for each } l \in \ol{1,m}  ,\ r\in \ol{1,7},  \  s\in \ol{1,k_{r,l}}.
\end{split} \end{align}

Choose $0\neq v\in M_0$. It is homogeneous of ordinary degree $-\ell$. Every oscillator generator raises ordinary degree by at most two, so every element of $M_{p-1}$ has degree at most $-\ell+2p-2$. Since $\xi(v)\in M_{p-1}$, its component of degree $-\ell+2p$ is zero. This component is $h=\sum_lh_l$, where
\begin{align*} \begin{split}
h_l= (-1)^{k_{1,l} +k_{7,l}}c_l\Big(&
\prod_{s=1}^{k_{1,l}} x_{j_{1,l,s}}x_{i_{1,l,s}}
\prod_{s=1}^{k_{2,l}} y_{j_{2,l,s}}y_{i_{2,l,s}}
\prod_{s=1}^{k_{3,l}}( y_{j_{3,l,s}}y_{i_{3,l,s}} -x_{j_{3,l,s}}x_{i_{3,l,s}})\\
&
\prod_{s=1}^{k_{4,l}} (x_{j_{4,l,s}-n}y_{i_{4,l,s}}-y_{j_{4,l,s}-n}x_{i_{4,l,s}})
\prod_{s=1}^{k_{5,l}} x_{j_{5,l,s}-n}y_{i_{5,l,s}}\\
&
\prod_{s=1}^{k_{6,l}} (x_{i_{6,l,s}-n }y_{j_{6,l,s} }-x_{j_{6,l,s} }y_{i_{6,l,s}-n})
\prod_{s=1}^{k_{7,l}} x_{j_{7,l,s} }y_{i_{7,l,s}- n}
\Big)v .
\end{split}\end{align*}

Cancel the nonzero factor $v$ from $h=0$ and compare coefficients of monomials in $x_i,y_i$ ($i\in J_2$). We treat one such coefficient at a time. Within each group,
\begin{align} \begin{split}\label{a5.22}
& k_{1,1} +k_{7,1}=k_{1,l'} +k_{7,l'},\; \\ &\Big(
\prod_{s=1}^{k_{1,1}} x_{j_{1,1,s}} \Big) \Big(
\prod_{s=1}^{k_{7,1}} x_{j_{7,1,s}}
\Big)  =  \Big(
\prod_{s=1}^{k_{1,l'}} x_{j_{1,l',s}}\Big) \Big(
\prod_{s=1}^{k_{7,l'}} x_{j_{7,l',s}}
\Big), \\ & \Big(
\prod_{s=1}^{k_{2,1}}  y_{i_{2,1,s}}  \Big) \Big(
\prod_{s=1}^{k_{5,1}} y_{i_{5,1,s}}
\Big)  =   \Big(
\prod_{s=1}^{k_{2,l'}}y_{i_{2,l',s}}\Big) \Big(
\prod_{s=1}^{k_{5,l'}} y_{i_{5,l',s}}
\Big)
\end{split}\end{align}
for $l' \in \ol{2,m}$.

Set
\begin{align*} \begin{split} 
&h'=\sum_{l=1}^m c_l\Big[\Big(\prod_{s=1}^{k_{1,l}}  x_{i_{1,l,s}}\Big)\Big(\prod_{s=1}^{k_{5,l}} x_{j_{5,l,s}-n}\Big)
\Big(\prod_{s=1}^{k_{2,l}} y_{j_{2,l,s}} \Big)\Big(\prod_{s=1}^{k_{7,l}}  y_{i_{7,l,s}- n}\Big)\\
&
\cdot \Big(\prod_{s=1}^{k_{3,l}}z_{  j_{3,l,s}, i_{3,l,s}}  \Big)\Big(
\prod_{s=1}^{k_{4,l}}  \omega_{j_{4,l,s}-n,i_{4,l,s}}   \Big)\Big(
\prod_{s=1}^{k_{6,l}}  \omega_{i_{6,l,s}-n,j_{6,l,s} }
\Big) \Big]  \in \msr F_p,
\end{split}\end{align*}
The contribution of this group to $h$ is $\pm\phi(h')gv$, where $g$ is its fixed nonzero monomial in $\mathbb F[x_i,y_i:i\in J_2]$. Thus $\phi(h')=0$.
By Lemma \ref{lem:4.4}, we have  $h'\in \msr R \cap \msr F_p$.

Denote by $\ol{\xi} =\xi +U_{p-1}(\mfk g) \in S_{(p)} \left( \mathfrak{g} \right) $.
Recall the definition of the set $\mfk R$ (\ref{b4.29}) and consider the correspondence between the terms in $ h'g$ and the terms in $\ol{\xi}$.

\psp

The ten groups of relations, with the middle-variable factors written explicitly, are listed in Appendix~\ref{app:relations}. We also need to justify that polynomial coefficients in a relation can be lifted. Fix the multidegree $\gamma$ in the middle variables. Replacing a factor $a_i c_s$ by a symbol $Z_{is}$ has only the relations
$Z_{is}Z_{jt}-Z_{it}Z_{js}$: two assignments of the same $a$-factors to the same $c$-factors differ by successive pairwise exchanges. These are the minors in $I_{\Delta_2}$. The ideal $\msr R$ is homogeneous separately in the $a$-variables and in the alternating variables. A relation of type $ap$ can therefore be multiplied by one of the available $c_s$ and lifted; a relation of type $pp$ needs no such factor. The remaining $a$-factors can be assigned to the remaining middle-variable factors. This proves that the correspondence of generators also gives the required inclusion of ideals.

In conclusion, we have
\begin{align*}
\ol{\xi}  \in
 \left\langle \left. \ol{\Delta}^{n+j_1,s}_{i_1,t},\ol{\Delta}^{j_3,s}_{n+i_3,t}   \right\vert  i_1,j_1\in J_1 ; i_3,j_3 \in   J_3; (s,t) \in L \right\rangle+\langle I_{\Delta_2}\rangle .
\end{align*}

On the other hand, the operators in \eqref{a1.19}--\eqref{a1.20} vanish, as verified using \eqref{eq:ordered-pf}. Together with \eqref{5.15}, this gives the reverse inclusion.

Therefore, (\ref{5.20}) holds.
 \end{proof}
\pse
 
Define a bijective function $\theta: \{0, 1, \dots, 2n\} \to \{ 0,1, \dots, 2n\}$ by
\begin{align*}
\theta(i) = \begin{cases}
0 & \text{if } i =0, \\
n+i & \text{if } 1\leq i \leq n, \\
i-n & \text{if } i > n.
\end{cases}
\end{align*}
We extend this to pairs by setting $\theta(i, j) = (\theta(i), j)$ for $i, j \in \{1, \dots, 2n\}$.
 Then $\theta(L^c)=S_0$, with $S_0$ as in \eqref{eq:S0}.

Now we have our first main theorem:

\begin{theorem}\label{the:5.1}
 If $\ell   \leq 0$, we have
\begin{align}\label{5.43}
\begin{split}
\msr V({\msr H}_{\la\ell \ra})= \msr{V}  ( \la &\td{E}_{s',t'} , \Delta_{i,i'}^{j,j'} ,\ol{\Delta}^{n+j_1,s}_{i_1,t},\ol{\Delta}^{ j_3,s}_{n+i_3,t} \mid (s,t)\in L;(s',t')\in L^c;\\
&\theta(j),\theta(j')\in A;\ i,i'\in B ;  i_1,j_1\in J_1;i_3,j_3\in J_3\ra  ).
\end{split}\end{align}
Moreover,
\begin{align*}
\begin{split}
\msr V({\msr H}_{\la\ell \ra})\cong   \msr V_1(S_0) \cap {\bf Pf}(2n,2 ).
\end{split}\end{align*}
In particular, when $\ell \leq 0$, $n_1=n_2$ and $n \geq 3$, we have
\begin{align}\label{5.49}
 \mathbb P\msr V({\msr H}_{\la\ell \ra}) \cong \mathbf{Gr}(2,n)   .
 \end{align}

\end{theorem}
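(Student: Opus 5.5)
The first assertion \eqref{5.43} will follow from Lemma~\ref{lem:5.2} together with \eqref{5.11}. By Lemma~\ref{lem:5.1}, $\operatorname{Ann}_{S(\mfk g)}(\overline M)=\bigoplus_{p\geq1}I_{(p)}$, and Lemma~\ref{lem:5.2} identifies this with the ideal $J$ generated by $I_{(1)}=\operatorname{Span}\{\widetilde E_{s',t'}:(s',t')\in L^c\}$, the minors in $I_{\Delta_2}$ and the three-term quadrics. To match \eqref{5.43} I only need to check one indexing fact. The condition that $(j_1,i_1),(j_2,i_2),(j_2,i_1),(j_1,i_2)\in L\setminus L_0$ describes exactly the minors with $\theta(j),\theta(j')\in A$ and $i,i'\in B$. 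This should follow by reading off the rectangles in \eqref{1.17}. Taking radicals then gives the reduced scheme.

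For the case $n_1=n_2$, the set $J_2$ is empty and $W$ reduces to $W_1\cup W_3\cup P$. Here I would rerun Proposition~\ref{prop:even-filtration} and Lemma~\ref{lem:5.2} with $T=\mathrm{id}$. In this case $\widetilde\Delta$ contains no second-order part, so $\msr H_{\la\ell\ra}$ has a direct description, and Case~2 of the filtration argument is vacuous. The minors $I_{\Delta_2}$ become empty, and only $I_{(1)}$ and the three-term quadrics remain.

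For the isomorphism with $\msr V_1(S_0)\cap{\bf Pf}(2n,2)$, I would identify $\mfk g^*$ with $\mathrm{Skew}(2n)$ through $X_{ab}=F_{ab}=\widetilde E_{\theta(a),b}$. Under this map, $\theta(L^c)=S_0$ shows that the linear generators cut out exactly $\msr V_1(S_0)$. Reordering the indices as $A,B,C$, a skew matrix in $\msr V_1(S_0)$ has nonzero entries only in the blocks $A\times A$, $A\times B$ and $A\times C$, together with their transposes. The $B\times B$ block and all blocks involving $C$ with $B$ or $C$ vanish. Rank at most $2$ for a skew matrix means that all $4\times4$ Pfaffians $X_{ab}X_{cd}-X_{ac}X_{bd}+X_{ad}X_{bc}$ vanish. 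On this section, I will show that these Pfaffians are precisely the images of the symbols $\overline\Delta$ from \eqref{a1.19}--\eqref{a1.20} (compare \eqref{eq:ordered-pf}). The $2\times2$ minors $F_{ac}F_{bd}-F_{ad}F_{bc}$ with $a,b\in A$ and $c,d\in B$ are also Pfaffians once $X_{cd}=0$, by \eqref{eq:block-minors}. Conversely, every $4\times4$ Pfaffian restricted to $\msr V_1(S_0)$ either vanishes identically or equals, up to sign, one of the generators. I expect this last claim to be the main obstacle: it requires a case analysis on how the four indices distribute among $A$, $B$ and $C$. At least two of the indices must lie in $A$ for a term to survive, and the cases are $AAAA$, $AAAB$, $AAAC$, $AABB$, $AABC$ and $AACC$. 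I would verify that each case matches one of the families $\overline\Delta^{n+j_1,s}_{i_1,t}$, $\overline\Delta^{j_3,s}_{n+i_3,t}$ or $\Delta^{j,j'}_{i,i'}$, using $\widetilde E_{i,j}=-\widetilde E_{\theta(j),\theta(i)}$ to fix signs. Since both ideals define reduced schemes after taking radicals, equality of the ideals modulo the linear forms gives the isomorphism.

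For \eqref{5.49}, take $n_1=n_2$, so that $B=\emptyset$. In $\msr V_1(S_0)$ the surviving coordinates are then the entries $X_{ab}$ with $a,b\in A$, and the entries $X_{ac}$ with $a\in A$ and $c\in C$, where the $A\times C$ entries come from $S_0$. I would check from \eqref{eq:S0} that with $n_1=n_2$ the $A\times C$ block is also killed, so the section is just $\mathrm{Skew}(A)$ with $|A|=n_1+(n-n_2)=n$. The variety is then ${\bf Pf}(n,2)$, the affine cone over the Plücker embedding of $\mathbf{Gr}(2,n)$, using that rank-$\le2$ skew matrices are exactly the decomposable $2$-vectors $u\wedge v$. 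Projectivizing gives $\mathbf{Gr}(2,n)$ for $n\geq3$; for $n=2$ the variety is degenerate.
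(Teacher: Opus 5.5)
Your route is the paper's route. You derive \eqref{5.43} from Lemma~\ref{lem:5.2}, treat $n_1=n_2$ separately, identify $X_{ab}=\widetilde E_{\theta(a),b}$ with $\theta(L^c)=S_0$, match $4\times4$ Pfaffians with the $\overline\Delta$'s and with the block minors via \eqref{eq:block-minors}, and use the Pl\"ucker cone when $B=\emptyset$. There is, however, a concrete error in the step you single out as the main obstacle.

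\textbf{The $A\times C$ block.} Your description of $\msr V_1(S_0)$ is wrong. Since $C=J_3\cup(n+J_1)=\ol{n_2+1,n+n_1}$, the first and third pieces of \eqref{eq:S0} are exactly $C\times\ol{1,2n}$ and $\ol{1,2n}\times C$. So every entry in a row or column indexed by $C$ vanishes on $\msr V_1(S_0)$, for all $n_1\leq n_2$ and not only when $n_1=n_2$. The surviving coordinates are those in $A\times A$ and $A\times B$, together with their transposes, which is precisely $\theta(L)$.

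This is not cosmetic. If the $A\times C$ block survived, the $AACC$ Pfaffian would restrict to the minor $X_{a_1c_1}X_{a_2c_2}-X_{a_1c_2}X_{a_2c_1}$. That minor is not among the generators in \eqref{5.43}: the $\overline\Delta$'s require the remaining index pair to lie in $\theta(L)$, and the $\Delta$'s require the columns to lie in $B$. Your ``conversely'' step would then be false.

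With the correct description, every Pfaffian meeting $C$ vanishes identically, and only the cases $AAAA$, $AAAB$ and $AABB$ remain. In the first two, pigeonhole puts two indices in $J_1$ or two in $n+J_3$. The Pfaffian is then $\Theta(\overline\Delta^{n+j_1,s}_{i_1,t})$ or $\Theta(\overline\Delta^{j_3,s}_{n+i_3,t})$, with the other pair in $\theta(L)$. In the third case, $X_{b_1b_2}=0$ turns the Pfaffian into a block minor.

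\textbf{The case $n_1=n_2$.} Rerunning Proposition~\ref{prop:even-filtration} with $T=\mathrm{id}$ does not work literally, because its Case~1 works with images of monomials, and with $T=\mathrm{id}$ these are not harmonic. What you actually need is the explicit description you only allude to: $\msr H_{\la\ell\ra}=M_0\,\mathbb F[W]$, where $M_0$ is spanned by the monomials of degree $-\ell$ in $x_i$ ($i\in J_1$) and $y_j$ ($j\in J_3$).

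The paper takes this generation from Xu's book. It is also the classical fact that $\ker\sum_i u_i\partial_{v_i}$ is generated by the $u_i$ and the minors $u_iv_j-u_jv_i$, with $(u_i,v_i)=(x_i,y_i)$ for $i\in J_1$ and $(u_i,v_i)=(y_i,x_i)$ for $i\in J_3$. Given this, $M_k=\sum_{j\leq k}M_0W^j$ follows immediately, and the highest-degree argument of Lemma~\ref{lem:5.2} runs with no middle variables, as in the paper.
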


\begin{proof}
(1) Assume that $n_1<n_2$.

We use the same-block row and column condition in the definition of $I_{\Delta_2}$.

According to Lemma \ref{lem:5.2}, we have:
\begin{align*}\begin{split}
\msr V({\msr H}_{\la\ell \ra})= \msr{V}  ( \la &\td{E}_{s',t'} , \Delta_{i,i'}^{j,j'} ,\ol{\Delta}^{n+j_1,s}_{i_1,t},\ol{\Delta}^{ j_3,s}_{n+i_3,t} \mid (s,t)\in L;(s',t')\in L^c;\\
&\theta(j),\theta(j')\in A;\ i,i'\in B ;  i_1,j_1\in J_1;i_3,j_3\in J_3\ra  )  .
\end{split}\end{align*}

(2) Assume that $n_1=n_2=1$ and $n=2$. Then, writing $a_1=x_1$, $a_2=y_2$ and $w=y_1y_2-x_1x_2$, the harmonic space is
\[
\msr H_{\la\ell\ra}=\mathbb F[a_1,a_2]_{-\ell}\,\mathbb F[w].
\]
This follows either from the harmonic decomposition in \cite{LX}, or by solving $(x_1\partial_{y_1}+y_2\partial_{x_2})f=0$. Multiplication by $w$ raises the filtration by one; the other generators preserve or lower its $w$-degree. Consequently
\[
M_k=\bigoplus_{j=0}^k\mathbb F[a_1,a_2]_{-\ell}w^j.
\]
There is one nonzero coordinate in the associated variety and no quadratic relation on it. Thus \eqref{5.43} holds and the affine variety is $\mathbb A^1$.

(3) Assume that $n_1=n_2  $ and $n\geq 3$.\pse

Set \begin{align*}M_0=\operatorname{Span}\Big\{ \prod_{i=1}^{n_1} x_{i}^{\al_i}\prod_{j=n_1+1}^{n } y_{j}^{\be_j} \vert \sum_{i=1}^{n_1}\al_i + \sum_{j=n_1+1}^{n }\be_j =-\ell \Big\} .\end{align*}
 According to (7.2.50)-(7.2.52) in \cite{Xx}, we have $U(\mfk g)(M_0)={\msr H}_{\la\ell \ra}$.
Furthermore, we have
\begin{align}\label{5.47}
M_k=U_k(\mfk g)(M_0)=\sum_{j=0}^kM_0\cdot W^j
=\operatorname{Span}\Big\{f\prod_{i=1}^j w_i\mid f\in M_0,\ w_i\in W,\ 0\leq j\leq k
\Big\},\end{align}
where $W=W_1\cup W_3 \cup P$ (cf.(\ref{3.22})-(\ref{3.24})).
The summand $M_0W^j$ is homogeneous of ordinary degree $2j-\ell$, so these summands form a direct sum. The formula follows by multiplication with $W$ and the commutator argument of Lemma~\ref{lem:2.1}; the remaining generators preserve or lower these degrees. The highest degree argument in Lemma~\ref{lem:5.2} now applies with no middle variables.
According to (\ref{5.47}) and Lemma \ref{lem:4.4}, we can conclude that
\begin{align*}\begin{split}
&\msr V({\msr H}_{\la\ell \ra})\\= &\msr{V}  ( \la  \td{E}_{s',t'} ,  \ol{\Delta}^{n+j_1,s}_{i_1,t},\ol{\Delta}^{ j_3,s}_{n+i_3,t} \mid (s,t)\in L;(s',t')\in L^c;  i_1,j_1\in J_1;i_3,j_3\in J_3\ra  )  .
\end{split}\end{align*}

(4) When $ n_1=n_2=n$, we set
\begin{align*}
M_k= \operatorname{Span}\left\{x^\al\mid\sum_{i=1}^n\al_i=-\ell\right\}\cdot\sum_{j=0}^kW_1^j  \end{align*}
for $k\in \mbb N$.
Then $M_k= U_k(\mfk g)(M_0) $  and $U(\mfk g)(M_0)={\msr H}_{\la\ell \ra}$ by (7.2.53) in \cite{Xx}.
Moreover,\begin{align*} \msr V({\msr H}_{\la\ell \ra}) =  \msr{V}  ( \la  \td{E}_{s ,t } ,  \ol{\Delta}^{n+s_1,n+s_2}_{t_1,t_2}   \mid  s,s_1,t_1 , s_2,t_2 \in \ol{1,n} ; t\in \ol{1,2n}  \ra  )  .
 \end{align*}
In summary, (\ref{5.43}) holds if $n_1=n_2$.
This completes the proof of (\ref{5.43}).
\pse

Now define the algebra isomorphism $\Theta$ from $S(\mfk g)$ to $\mathbb F[\mathrm{Skew}(2n)]=\mathbb F[X_{ij}:1\leq i<j\leq2n]$, using $X_{ji}=-X_{ij}$ and $X_{ii}=0$, by $\Theta(\td{E}_{i,j}) =  X_{\theta(i),j }$.
For $i,j,s,t\in\ol{1,2n}$, we set
\begin{align} \label{b5.49}
\kappa_{i,t}^{j,s}=X_{ji }X_{st}-X_{js }X_{it}+X_{jt }X_{is}
 \end{align}
to be the Pfaffian of the corresponding principal $4\times4$ submatrix of $X\in\mathrm{Skew}(2n)$.
For $i_1,j_1\in J_1$, $i_3,j_3\in J_3$ and $(s,t)\in L$,
\begin{align*} 
\kappa_{i_1,t}^{j_1,\theta(s)}= \Theta\left( \ol{\Delta}^{n+j_1,s}_{i_1,t} \right),
\kappa_{i_3+n,t}^{j_3+n,\theta(s)}= \Theta\left( \ol{\Delta}^{ j_3,s}_{n+i_3,t} \right).
 \end{align*}
For $\theta(j),\theta(j')\in A$ and $i,i'\in B$, the missing Pfaffian term has its $B\times B$ coordinate equal to zero. Therefore
\begin{align}\label{b5.51}
\kappa_{i,i'}^{\theta(j),\theta(j')}=\Theta(\Delta_{i,i'}^{j,j'})
\quad\pmod{\langle X_{rs}:(r,s)\in S_0\rangle}.
\end{align}

On the other hand, we can verify that $ \Theta^{-1} (\kappa_{i,t}^{j,s})$ is always one of those generators (the only possible difference is a sign), or zero, modulo the zero-coordinate ideal.

Hence,
\begin{align*}
\begin{split}
\msr V({\msr H}_{\la\ell \ra}) \cong \msr V_1(\theta(L^c)) \cap {\bf Pf}(2n,2 )=\msr V_1(S_0) \cap {\bf Pf}(2n,2 ).
\end{split}\end{align*}

In particular, when $\ell \leq 0$, $n_1=n_2$ and $n \geq 3$, \begin{align*}
 \mathbb P\msr V({\msr H}_{\la\ell \ra}) \cong \mathbf{Gr}(2,n)   .
 \end{align*}
Consider the Pl\"ucker coordinates  $Q_{i,j}$ of $\mathbf{Gr}(2,n)$, which satisfy the classical Pl\"ucker relations:
\begin{align}\label{eq:plucker}
Q_{i_1,i_2}Q_{i_3,i_4} - Q_{i_1,i_3}Q_{i_2,i_4} + Q_{i_1,i_4}Q_{i_2,i_3} = 0,
\qquad (1 \le i_1 < i_2 < i_3 < i_4 \le n).
\end{align}

Let $A=J_1\cup(n+J_3)$ and let $\rho:\ol{1,n}\to A$ be the order-preserving bijection. When $n_1=n_2$, the zero coordinates in $S_0$ leave precisely the skew-symmetric block on $A$. The coordinate-ring isomorphism is
\begin{equation*}
\mathbb F[Q_{ij}:i<j]/(\text{Pl\"ucker relations})
\longrightarrow \mathbb F[\msr V(\msr H_{\la\ell\ra})],
\qquad Q_{ij}\longmapsto X_{\rho(i),\rho(j)}.
\end{equation*}
Under $\Theta$, its defining quadrics are exactly \eqref{eq:plucker}. Thus the affine associated variety is the Pl\"ucker cone over $\mathbf{Gr}(2,n)$; taking $\operatorname{Proj}$ gives \eqref{5.49}. The Grassmannian assertion is a direct consequence of the coordinate-section description.

\psp
\end{proof}
We have proved Theorem 1 stated in the Introduction.

\section[Extensions to the Odd Orthogonal Algebra]{Extensions to the Orthogonal Algebra $\mfk{o}(2n+1)$}

We now consider $M=\msr H'_{\la\ell\ra}$ and $\mfk g=\mfk o(2n+1)$, with $\ell\in\mathbb Z$. We first describe the filtration and then repeat the highest degree argument with the additional variable $x_0$. The filtration computation is kept separate from the even case.

The representation formulas are given by
\begin{align*}
\td{\pi}'(E_{i,j}-E_{n+j,n+i})=\td{\pi} (E_{i,j}-E_{n+j,n+i})\qquad \text{for } i,j\in\ol{1,n} \quad \text{(cf. \eqref{3.1}--\eqref{3.5})}
\end{align*}
and
\begin{align} \label{6.2}
\td{\pi}'(E_{0,i} )= \left\{\begin{array}{ll} -x_0 x_i &\text{if }
i \in\ol{1,n_1};\\  x_0 \ptl_{x_i} &\text{if } i\in\ol{n_1+1,n } ;\\
x_0 \ptl_{y_{i-n}} &\text{if } i\in\ol{n+1,n+n_2};\\
-x_0 y_{i-n} &\text{if } i \in\ol{n+n_2+1,2n};
\end{array}\right.
\end{align}
\begin{align} \label{6.3}
\td{\pi}'(E_{i,0} )= \left\{\begin{array}{ll} \ptl_{x_{0}}\ptl_{x_{i}} &\text{if }
i \in\ol{1,n_1};\\ x_i \ptl_{x_{0}} &\text{if } i\in\ol{n_1+1,n } ;\\
y_{i-n} \ptl_{x_{0}} &\text{if } i\in\ol{n+1,n+n_2};\\
\ptl_{x_{0}}\ptl_{y_{i-n}} &\text{if } i \in\ol{n+n_2+1,2n}.
\end{array}\right.
\end{align}
For convenience, we extend the definition of $\widetilde{E}_{i,j}$ to indices involving zero as follows:
\begin{align*}
\td{E}_{i,j} =\left\{\begin{array}{ll}
\td{E}_{i,j} \text{ in (\ref{1.15})}   &\mbox{if}\;
i,j\neq 0 ;\\
E_{0,j}-E_{j-n,0}  &\mbox{if}\;  i =0,j>n;\\
E_{0,j}-E_{n+j,0}  &\mbox{if}\;  i =0,j\leq n;\\
E_{i,0}-E_{0,i+n}   &\mbox{if}\;  j =0,i\leq n;\\
E_{i,0}-E_{0,i-n}  &\mbox{if}\;  j =0,i>n.
\end{array}\right.
\end{align*}

Let $J_0$ be the double integration operator with zero integration constants:
\[
J_0(x_0^rf)=\frac{x_0^{r+2}f}{(r+1)(r+2)}\qquad(f\in\msr A,\ r\geq0).
\]
On $\msr A\oplus x_0\msr A$, define
\begin{equation*}
T'=\sum_{i\geq0}(2J_0\widetilde\Delta)^i,\qquad
T'_\iota(f)=\sum_{i\geq0}\frac{2^i x_0^{2i+\iota}}{(2i+\iota)!}\widetilde\Delta^i(f)
\quad(\iota=0,1;\ f\in\msr A).
\end{equation*}
The sums are finite since $\widetilde\Delta$ is locally nilpotent. Thus $T'(f+x_0g)=T'_0(f)+T'_1(g)$. The inverse on the harmonic space takes a polynomial $h$ to $h|_{x_0=0}+x_0(\partial_{x_0}h)|_{x_0=0}$. For $i,j\neq0$, the even orthogonal operator $\widetilde E_{i,j}$ commutes with $\widetilde\Delta$ and acts on neither $x_0$ nor $J_0$. Hence
\begin{align} \label{6.5}
T' \td{E}_{i,j} =  \td{E}_{i,j} T' \quad \text{for } i,j \neq 0.
\end{align}
Equivalently, solving $\partial_{x_0}^2g=2\widetilde\Delta g$ coefficient by coefficient (cf. \cite{LX}), we obtain
\begin{align*}
{\msr H}_{\la \ell \ra}'=T'(\msr A_{\la \ell \ra} +\msr A_{\la \ell-1 \ra} x_0)=T_0'(\msr A_{\la \ell \ra} ) \oplus T_1'(\msr A_{\la \ell-1 \ra} ).
\end{align*}

Let ${\mfk d}_1' : {\msr A}'  \rightarrow \mathbb{Z}\cup \{-\infty\}$ be the weighted degree function (cf. \eqref{2.2}--\eqref{2.3}) defined by
\begin{align*} 
{\mfk d}_1' (x_0^{\alpha_0}x^\al y^\be)= \alpha_0+2 \sum_{i \in J_3} \alpha_i +\sum_{i \in J_2}\alpha_i +2\sum_{i \in J_1}\beta_i+\sum_{i \in J_2}\beta_i.
\end{align*}
For $ x^\al y^\be \in \msr A_{\la \ell \ra}$ and $x^{\al'} y^{\be'} \in \msr A_{\la \ell-1 \ra} $, we have
\begin{align*}
{\mfk d}_1'(T'_0(x^\al y^\be))={\mfk d}_1'(x^\al y^\be),\quad {\mfk d}_1'(T'_1(x^{\al'}y^{\be'}))=1+{\mfk d}_1'(x^{\al'}y^{\be'}).
\end{align*}
Indeed, each application of $\widetilde\Delta$ lowers the weight by two, and the accompanying factor $x_0^2$ raises it by two. The initial data cannot cancel. Thus ${\mfk d}_1'$ and $T'$ satisfy \eqref{2.6}.
Taking $P_1=W$, the fact that $T'(f\cdot p)=T'(f)p$ for $p\in W$ shows that $T'$ satisfies condition \eqref{2.7}.
For $f\in M$ and $(i,j) \notin L_0$, we have
\begin{align*} 
{\mfk d}_1' ( \td{E}_{i,j}(f) )\leq {\mfk d}_1' ( f )+1.
\end{align*}
Thus ${\mfk d}_1'$ satisfies condition \eqref{2.5}.

Hence, ${\mfk d}_1'$ and $T'$ satisfy the assumptions of Lemma \ref{lem:2.1}.

Denote
\begin{align} \label{6.10}
\ell'= \left\{\begin{array}{ll} 0 &\text{if } \ell \leq 0;\\ \ell &\text{if } \ell> 0,\,n_1<n_2;\\
2\ell-1 &\text{if } \ell > 0,\,n_1=n_2.
\end{array}\right.
\end{align}
In the rest of this section, $x^\alpha$ may include $x_0^{\alpha_0}$; in the initial-data space $\alpha_0\in\{0,1\}$. Define, for $k\in\mathbb Z$,
\begin{align*}
TN'(k)&=\operatorname{Span}\{T'(f): f\text{ is a monomial in }
\msr A_{\la\ell\ra}\oplus x_0\msr A_{\la\ell-1\ra},\ \mfk d'_1(f)=k+\ell'\},\\
TN'_k&=\sum_{i\leq k}TN'(i),\qquad
V'_k=\sum_{i=0}^kTN'_{k-i}W^i\quad(k\geq0).
\end{align*}
In particular, $V'_0=TN'_0$. It is finite-dimensional, since ordinary degree of a monomial of signed degree $\ell$ is at most $2\mfk d'_1-\ell$. It is nonzero: for $\ell\leq0$ it contains $x_1^{-\ell}$; for $\ell>0$, it contains $T'_0(x_{n_1+1}^{\ell})$ if $n_1<n_2$, and $T'_1(y_1^{\ell-1})$ if $n_1=n_2$.

\begin{lemma}\label{lem:6.1}
Assume $n_1=n_2$. Let $f\in\msr A_{\la\ell-1\ra}$ be a monomial and let $k\geq1$ satisfy $1+\mfk d'_1(f)=\ell'+k$. Then
\[
T'_1(f)=T'(x_0f)\in U_1(\mfk o(2n+1))(V'_{k-1}).
\]
\end{lemma}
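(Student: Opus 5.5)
The plan is to realise $T'_1(f)$ directly as the image of a single basis vector of lower weight under one of the multiplication operators $\widetilde E_{0,i}$ from \eqref{6.2}. The tool is the description of $\msr H'_{\la\ell\ra}$ as $T_0'(\msr A_{\la\ell\ra})\oplus T_1'(\msr A_{\la\ell-1\ra})$. A harmonic polynomial $h$ is determined by its initial data $h|_{x_0=0}$ and $(\partial_{x_0}h)|_{x_0=0}$, and $h=T'_0(h|_{x_0=0})+T'_1((\partial_{x_0}h)|_{x_0=0})$.

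Since $n_1=n_2$, we have $J_2=\emptyset$. The variables $x_i$ ($i\in J_1$) and $y_j$ ($j\in J_3$) have signed degree $-1$ and $\mfk d_1'$-weight $0$. The variables $x_j$ ($j\in J_3$) and $y_i$ ($i\in J_1$) have signed degree $+1$ and weight $2$.

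\textbf{Step 1: the monomial has a factor of weight zero.} I first show that $f$ contains some $x_i$ with $i\in J_1$ or some $y_j$ with $j\in J_3$. Suppose it does not. Then $f$ is a monomial in weight-$2$ variables only, so its signed degree $\ell-1$ equals its ordinary degree. This forces $\ell\geq1$, hence $\ell'=2\ell-1$ by \eqref{6.10}, and $\mfk d'_1(f)=2(\ell-1)$. Then $1+\mfk d_1'(f)=2\ell-1=\ell'$, i.e.\ $k=0$, which contradicts $k\geq1$. (For $\ell\leq0$ the signed degree $\ell-1<0$ already forces a negative variable.)

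\textbf{Step 2: factor and apply $\widetilde E_{0,i}$.} Say $f=x_ig$ with $i\in J_1$; the case $f=y_jg$ with $j\in J_3$ is symmetric, using $\widetilde\pi'(E_{0,n+j})=-x_0y_j$. Then $g\in\msr A_{\la\ell\ra}$ is a monomial and $\mfk d_1'(g)=\mfk d_1'(f)=\ell'+k-1$. Hence $T_0'(g)\in TN'(k-1)\subseteq V'_{k-1}$. By \eqref{6.2}, $E_{0,i}$ acts on $\msr A'$ as multiplication by $-x_0x_i$. The companion term $-E_{n+i,0}$ in $\widetilde E_{0,i}$ is $-\partial_{x_0}\partial_{x_i}$ by \eqref{6.3}. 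I would work with the actual orthogonal element $\widetilde E_{0,i}\in\mfk o(2n+1)$ and track both terms, since they should combine into an operator on $\msr H'$.

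\textbf{Step 3: identify the image via initial data.} Let $h=\widetilde E_{0,i}(T_0'(g))$. It lies in $\msr H'_{\la\ell\ra}$ because $\msr H'_{\la\ell\ra}$ is a submodule. Write $T_0'(g)=g+x_0^2\widetilde\Delta g+O(x_0^4)$. Multiplication by $-x_0x_i$ contributes only odd powers of $x_0$, with linear coefficient $-x_ig=-f$. The derivative term $-\partial_{x_0}\partial_{x_i}$ contributes $-2x_0\partial_{x_i}\widetilde\Delta g+O(x_0^3)$, again odd in $x_0$. Therefore $h|_{x_0=0}=0$, and by uniqueness of the harmonic extension,
\[
h=T_1'\bigl(-f+c\,\partial_{x_i}\widetilde\Delta g\bigr)
\]
for an explicit constant $c$.

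\textbf{Step 4: handle the correction term.} This is the main obstacle. The correction $\partial_{x_i}\widetilde\Delta g$ has weight $\mfk d_1'(g)-2$, so $T_1'$ of it has weight $\ell'+k-2$ and lies in $TN'_{k-2}\subseteq V'_{k-1}$ (or vanishes). Hence $T_1'(f)=-h+(\text{element of }V'_{k-1})\in U_1(\mfk o(2n+1))(V'_{k-1})$.

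The details to check carefully are the sign and normalisation conventions for $\widetilde E_{0,i}$ and the exact $x_0$-parity bookkeeping. If the $x_0^0$ coefficient of $h$ does not vanish, its $T_0'$-part has weight at most $\ell'+k-1$ and lies in $V'_{k-1}$ anyway. So in every case only the top initial datum $-f$ matters.
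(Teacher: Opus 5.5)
There is a genuine gap, and it starts in Step 2. In $\widetilde E_{0,i}=E_{0,i}-E_{n+i,0}$ the index $n+i$ lies in $\ol{n+1,n+n_2}$. So by \eqref{6.3} the companion term $E_{n+i,0}$ acts as $y_i\partial_{x_0}$. The operator $\partial_{x_0}\partial_{x_i}$ you used is $E_{i,0}$, which pairs with $E_{0,n+i}$, not with $E_{0,i}$. Hence $\widetilde E_{0,i}=-x_0x_i-y_i\partial_{x_0}$, exactly as in the paper.

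Your parity bookkeeping in Step 3 survives, but it now gives
\[
\widetilde E_{0,i}\bigl(T'_0(g)\bigr)=T'_1\bigl(-f-2y_i\widetilde\Delta g\bigr).
\]
Since $\widetilde\Delta$ lowers $\mfk d'_1$ by $2$ and $y_i$ ($i\in J_1$) raises it by $2$, the correction $T'_1(y_i\widetilde\Delta g)$ lies in $TN'(k)$. That is precisely the level you are trying to lower, so Step 4's conclusion that only the datum $-f$ matters is false. For example, with $n=2$, $n_1=n_2=1$ and $g=x_1y_1$, one gets $y_1\widetilde\Delta g=f$ itself. For $n_1\geq2$ the correction can also be a different monomial: $g=x_1y_1$, $i=2$ gives $x_1^2y_2\neq f$.

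The missing idea is the second half of the paper's proof; your Steps 1 and 3 reproduce its first half. You must absorb the same-weight correction using multiplication operators that commute with $T'$. Write
\[
y_i\widetilde\Delta g=\sum_{i'\in J_1}\beta_{i'}x_{i'}y_ig/y_{i'}+\sum_{j\in J_3}\alpha_jy_jy_ig/x_j,
\]
with $\beta_{i'},\alpha_j$ the exponents in $g$. The operator $\widetilde E_{n+i',i}$ acts as multiplication by $x_{i'}y_i-x_iy_{i'}$, which is an element of $W_1$ up to sign. The operator $\widetilde E_{j,i}$ acts as multiplication by $y_iy_j-x_ix_j\in P$. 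Hence
\[
T'(x_0x_{i'}y_ig/y_{i'})=\widetilde E_{n+i',i}\bigl(T'(x_0g/y_{i'})\bigr)+T'_1(f),\qquad T'(x_0y_jy_ig/x_j)=\widetilde E_{j,i}\bigl(T'(x_0g/x_j)\bigr)+T'_1(f).
\]
Here $T'(x_0g/y_{i'})$ and $T'(x_0g/x_j)$ have weight $\ell'+k-2$, so they lie in $V'_{k-1}$. Substituting gives
\[
\Bigl(1+2\sum_{j\in J_3}\alpha_j+2\sum_{i'\in J_1}\beta_{i'}\Bigr)T'_1(f)\in U_1(\mfk o(2n+1))(V'_{k-1}).
\]
The coefficient is nonzero in characteristic zero, which closes the argument. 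Without this self-referential solve, or some other treatment of the same-weight correction, your proof does not go through.
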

\begin{proof}
When $\ell = 0$ and $k=1$, we have $V_0'=\mbb F$.
For $i_1\in J_1$ and $i_3\in J_3$,
\begin{align*}
\td{E}_{0,n+i_3}(-1)=x_0y_{i_3},\quad\td{E}_{0,i_1}(-1)=x_0x_{i_1}.\end{align*}
Hence $T'(x^\al y^\be)  \in U_1(\mfk{o}(2n+1))(V_{0}') $.

Now suppose that $\ell\neq0$ or $k\geq2$, and write $x^\alpha y^\beta=x_0f$.
Since
\begin{align*}
\ell-1+ \sum_{i=  1}^{n_1 } \al_i+\sum_{i=n_2+1}^{n } \be_i  = \sum_{i= n_1+1}^{n} \al_i+\sum_{i=1}^{n_2 } \be_i= \frac{\ell'+k -1}{2} ,  \end{align*}
we have $\sum_{i= 1}^{n_1} \al_i+\sum_{i=n_2+1}^{n } \be_i  > 0$ by \eqref{6.10}; the other displayed sum is only required to be nonnegative.
In fact the first sum equals $B+1-\ell$, where $B=(\ell'+k-1)/2$; it is positive for $\ell\leq0$, and equals $k/2>0$ for $\ell>0$. Without loss of generality, we may assume that $\sum_{i=1}^{n_1}\alpha_i>0$; the case $\sum_{i=n_2+1}^n \be_i>0$ is symmetric. Then we can write $x^\al y^\be = x_0 x_{i_1} g_1$ for some $i_1 \in J_1$ and $T'(g_1) \in TN'(k-1)$.

Since $\td{E}_{0,i_1}(T'( -g_1)) \in M=T'({\msr A}_{\la \ell \ra}+{\msr A}_{\la \ell-1 \ra}x_0)$, there exists $f' \in {\msr A}_{\la \ell \ra}+{\msr A}_{\la \ell-1 \ra}x_0$ such that $\td{E}_{0,i_1}(T'(- g_1)) =T'(f')$ and $\td{E}_{0,i_1}(T'(-g_1)) \equiv f' \mod \la x_0^2 \ra$.
According to \eqref{6.2}--\eqref{6.3},
\begin{align*} \begin{split}
\td{E}_{0,i_1}(T'(- g_1)) &\equiv(-x_0 x_{i_1} -y_{i_1} \ptl_{x_0})( - g_1 - x_0^2 \td{\Delta}(g_1) ) \mod \la x_0^2 \ra \\
&\equiv x_0 x_{i_1}g_1+2x_0 y_{i_1}  \td{\Delta}(g_1)  \mod \la x_0^2 \ra.
\end{split}\end{align*}
Thus
\begin{align}\label{a6.20}
T'(x^\al y^\be)  =  \td{E}_{0,i_1}(T'(- g_1))-T'\left(2x_0 y_{i_1}  \td{\Delta}(g_1)\right) . \end{align}

In the sums below, terms with $\beta_i=0$ or $\alpha_j=0$ are omitted before writing a quotient by $y_i$ or $x_j$. Thus all quotients denote polynomials. By \eqref{a6.20} and \eqref{1.8},
\begin{align} \begin{split} \label{a6.23}
T'(x^\al y^\be) =& \td{E}_{0,i_1}(T'(- g_1))
-T'\left(\sum_{i= 1}^{n_1}    \left( 2   \be_i  \frac{ x_0 x_i y_{i_1}g_1}{y_i}    \right) \right) \\
&- T'\left(\sum_{j=n_2+1}^n    \left( 2    \al_j  \frac{ x_0 y_j y_{i_1}g_1}{x_j}    \right) \right).
\end{split}\end{align}
According to \eqref{6.2}--\eqref{6.3},
\begin{align} \begin{split} \label{a6.24}
&T'\left(\sum_{j=n_2+1}^n    \left( 2    \al_j  \frac{ x_0 y_j y_{i_1}g_1}{x_j}    \right) \right)
\\=& \sum_{j=n_2+1}^n \left( \td{E}_{j,i_1}\left(T'\left(2    \al_j  \frac{ x_0 g_1}{x_j}   \right) \right) \right)+\left(2 \sum_{j=n_2+1}^n \al_j \right)T'(x^\al y^\be) .
\end{split}\end{align}
and
\begin{align} \begin{split} \label{a6.25}
&T'\left(\sum_{i= 1}^{n_1}    \left( 2    \be_i  \frac{ x_0 x_i y_{i_1}g_1}{y_i}    \right) \right)
\\=& \sum_{i= 1}^{n_1}  \left( \td{E}_{n+i,i_1}\left(T'\left(2    \be_i  \frac{ x_0 g_1}{y_i}   \right) \right) \right)+\left(2 \sum_{i= 1}^{n_1} \be_i \right)T'(x^\al y^\be).
\end{split}\end{align}
Combining \eqref{a6.23}, \eqref{a6.24} and \eqref{a6.25} yields
\begin{align*} \begin{split}
&\left(1+2 \sum_{j=n_2+1}^n \al_j  +2 \sum_{i= 1}^{n_1} \be_i \right)T'(x^\al y^\be) \\
 \quad =&\td{E}_{0,i_1}(T'(- g_1))- \sum_{j=n_2+1}^n \left( \td{E}_{j,i_1}\left(T'\left(2    \al_j  \frac{ x_0 g_1}{x_j}   \right) \right) \right)\\
&-\sum_{i= 1}^{n_1}  \left( \td{E}_{n+i,i_1}\left(T'\left(2    \be_i  \frac{ x_0 g_1}{y_i}   \right) \right) \right)  .
\end{split}\end{align*}

For $i\in J_1$ such that $\be_i >0$, we have $\mfk d'_1\left( \frac{ x_0 g_1}{y_i}  \right) =\mfk d'_1(x^\al y^\be)-2$, and so $T'\left( \frac{ x_0 g_1}{y_i}  \right)\in V_{k-1}'$.
For $j\in J_3$ such that $\al_j >0$, we have $\mfk d'_1\left( \frac{ x_0 g_1}{x_j}  \right) =\mfk d'_1(x^\al y^\be)-2$, and so $T'\left( \frac{ x_0 g_1}{x_j}  \right)\in V_{k-1}'$.
Therefore, we have
\begin{align*}   T'(x^\al y^\be) \in \, U_1(\mfk{o}(2n+1))(V_{k-1}'). \end{align*}
\end{proof}

\begin{proposition}\label{prop:odd-filtration}
For $k\in \mbb N$, let $M_0=V_0'$. Then $M_k=U_k(\mfk g)(V_0')=V_k'$. Moreover, $M={\msr H}'_{\la\ell\ra}=U(\mfk{o}(2n+1))(V_0')$.
\end{proposition}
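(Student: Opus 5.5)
We follow the proof of Proposition~\ref{prop:even-filtration} and apply Lemma~\ref{lem:2.1} to the odd case. Take $\msr C=\msr A'$, let $\msr C_0$ be the span of monomials in $\msr A_{\la\ell\ra}\oplus x_0\msr A_{\la\ell-1\ra}$, and set $T=T'$, $P_1=W$, $\mfk d_1=\mfk d_1'$ and $k_0=\ell'$. The verifications made just before the definition of $TN'(k)$ supply conditions \eqref{2.5}--\eqref{2.7}. By the bracket in the definition of $V'_k$, the spaces $V_{k,\ell'}$ of \eqref{2.11} are exactly $V'_k$. The first half of Lemma~\ref{lem:2.1} then gives $M_k=U_k(\mfk g)(V_0')\subseteq V_k'$. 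Since $V'_0$ is finite-dimensional and nonzero, it remains to prove the lowering condition
\[
TN'(k)\subseteq\sum_{i=1}^kU_i(\mfk g)(V'_{k-i})\qquad(k\geq1).
\]
After that, $M=\bigcup_kTN'_k\subseteq\bigcup_k V'_k=\bigcup_kM_k$ gives \eqref{1.10}.

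Take $T'(f)\in TN'(k)$ with $f$ a monomial. We split according to whether $f$ involves $x_0$.

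\textbf{Case A: $f\in\msr A_{\la\ell\ra}$.} Here $T'_0(f)$ is the even-case object with $x_0$ adjoined. We reuse the two cases of Proposition~\ref{prop:even-filtration}, with $\mfk l_1,\mfk l_2$ as there. In Case~1 we apply the $\mfk{sl}(n)$ filtration of \cite{ZX1} to $\mfk a\subseteq\mfk o(2n)\subseteq\mfk o(2n+1)$; this is legitimate because $T'$ commutes with $\widetilde E_{i,j}$ for $i,j\neq0$ by \eqref{6.5}. In Case~2 we peel off a factor $x_{i}y_{j}$ with $i\in J_2\cup J_3$ and $j\in J_2\cup J_3$, using an element of $\mfk o(2n)$ that acts as multiplication. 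The shift by $\ell'$ has to be tracked: for $\ell>0$ the bottom piece $V'_0$ is no longer monomials in the $J_1$, $J_3$ variables. The case $n_1=n_2$ is handled by the fact that all non-multiplicative generators preserve or lower degree, as in part (3) of Theorem~\ref{the:5.1}.

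\textbf{Case B: $f=x_0g$ with $g\in\msr A_{\la\ell-1\ra}$.} When $n_1=n_2$, this is exactly Lemma~\ref{lem:6.1}, which gives $T'_1(g)\in U_1(\mfk g)(V'_{k-1})$. When $n_1<n_2$, we imitate the proof of that lemma. If $g$ contains a variable $x_i$ with $i\in J_2\cup J_3$, or $y_{j}$ with $j\in J_1\cup J_2$, we use the raising operators $x_0\partial_{x_i}$ and $x_0\partial_{y_{j}}$ from \eqref{6.2}, or $x_i\partial_{x_0}$ and $y_{j}\partial_{x_0}$ from \eqref{6.3}. These write $T'(x_0g)$ as $\widetilde E_{0,*}$ or $\widetilde E_{*,0}$ applied to some $T'(h)$ with $\mfk d_1'(h)=\mfk d_1'(x_0g)-1$, plus correction terms $T'(\cdots)$. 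Each correction term either has strictly smaller $\mfk d_1'$, or is a scalar multiple of $T'(x_0g)$ which can be moved to the left, exactly as in \eqref{a6.23}--\eqref{a6.25}. If instead $g$ involves only $J_1$-$x$ and $J_3$-$y$ variables, we use $\widetilde E_{0,i_1}$ as in Lemma~\ref{lem:6.1}.

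The main obstacle is the coefficient bookkeeping in Case B modulo $\la x_0^2\ra$. The correction terms produced by $\widetilde\Delta$ must land in $V'_{k-1}$ or $V'_{k-2}W$, and the scalar multiplying $T'(x_0g)$ after moving terms across, here $1+2\sum\alpha_j+2\sum\beta_i$, must be nonzero. In characteristic zero this scalar is a positive integer, so it is nonzero. The real work is checking the weight drops for each term coming from \eqref{1.8}. These computations are routine but long, and we would defer them to an appendix.
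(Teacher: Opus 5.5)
\paragraph{Main gap: Case A (monomials without $x_0$).}
You lower every $x_0$-free initial monomial using only $\mathfrak o(2n)$, and you justify the appeal to \cite{ZX1} by \eqref{6.5}. But \eqref{6.5} only says that $T'_0$ intertwines the $\mathfrak o(2n)$-action on the whole space $\msr A_{\la\ell\ra}$. The even-case argument concerns $\widetilde\Delta$-harmonic polynomials: both the use of \cite{ZX1} and the description $M_k=\sum_j M_0W^j$ when $n_1=n_2$ rely on this. In the odd case the initial data $f$ are arbitrary.

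Even generators alone cannot work, because they do not involve $x_0$ and so preserve $\msr A\oplus x_0\msr A$. Take $n_1=n_2$ and $\ell=0$. Then
\[
V'_0=\mbb F,\qquad V'_1=\mbb F+\operatorname{span}\{x_0x_i,\,x_0y_j: i\in J_1,\ j\in J_3\}+\operatorname{span}W,
\]
which lies in $\msr A\oplus x_0\msr A$. On the other hand $T'(x_1y_1)=x_1y_1+x_0^2x_1^2\in TN'(2)$, since $\widetilde\Delta(x_1y_1)=x_1^2$. So the lowering condition at $k=2$ cannot be reached your way. The same happens for $n=2$, $n_1=1$, $n_2=2$.

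The missing idea is to trade an outer positive-weight variable for $x_0$, using the generators with index $0$. For $i_1\in J_1$, $\widetilde E_{0,i_1}$ acts by $-x_0x_{i_1}-y_{i_1}\partial_{x_0}$. Hence $\widetilde E_{0,i_1}(T'(-x_0g_1))\equiv y_{i_1}g_1\pmod{\la x_0^2\ra}$, so it equals $T'(y_{i_1}g_1)$ by uniqueness of initial data. Here $\mfk d'_1(x_0g_1)=\mfk d'_1(y_{i_1}g_1)-1$. The generator $\widetilde E_{0,n+i_3}$ does the same for $x_{i_3}$ with $i_3\in J_3$. These are cases (1)--(2) of Appendix~\ref{app:odd-filtration}. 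Only then can even generators handle monomials with a middle variable and no outer positive-weight variable (cases (5)--(8)). Their same-weight correction terms, such as $y_{i_1}\partial_{y_{i_2}}(g_1)$, fall back into the cases containing an outer positive-weight variable.

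\paragraph{Second gap: Case B when $n_1<n_2$.}
You assert that every correction term either has strictly smaller $\mfk d'_1$ or is a scalar multiple of $T'(x_0g)$. This is false. The summand $-\sum_{r\in J_2}\partial_{x_r}\partial_{y_r}$ of $\widetilde\Delta$ produces terms such as $x_0y_{i_1}\partial_{x_r}\partial_{y_r}(g_1)$, which have the same weight and a different shape.

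Moreover, take a monomial $x_0y_{i_1}g_1$ whose remaining factors are middle variables. The mechanism of Lemma~\ref{lem:6.1}, multiplication by $-x_0x_i$ or $-x_0y_j$, is then unavailable. The operator $y_{i_1}\partial_{x_0}$ you mention produces $x_0y_{i_1}g_1$ only through the $x_0^2$-term of some $T'(h)$ with $g_1$ occurring in $\widetilde\Delta(h)$.

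The paper takes $h=x_{n_2}y_{n_2}g_1$ and combines $\widetilde E_{0,i_1}(T'(x_{n_2}y_{n_2}g_1))$ with $\widetilde E_{n_2,i_1}(T'(x_0y_{n_2}g_1))$ as in \eqref{6.23}. This gives the nonzero coefficient $(2\alpha_{n_2}+3)(\beta_{n_2}+1)$. The remainders have the same weight but $\mfk q'=\alpha_{n_2}+\beta_{n_2}$ raised by two, and the paper finishes by descending induction on $\mfk q'$. The weight-one boundary case $x_0f$, with $f$ in $x_{J_1},y_{J_3}$, is treated separately.

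Your setup of Lemma~\ref{lem:2.1}, the inclusion $M_k\subseteq V'_k$, and the final union argument do agree with the paper.
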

\begin{proof}
The degree estimates above and Lemma~\ref{lem:2.1} give $M_k\subseteq V'_k$. For the reverse inclusion, it remains to lower a monomial in $TN'(k)$ to $V'_{k-1}$. If it contains an outer positive-weight variable, the operators involving index zero give this directly, or by the finite descending induction in Appendix~\ref{app:odd-filtration}. The middle-variable cases use the even generators. Lemma~\ref{lem:6.1} handles the remaining odd initial data when $n_1=n_2$. The full case calculation, including the weight-one boundary case, is given in that appendix. It proves $TN'(k)\subseteq U_1(\mfk g)(V'_{k-1})$ for $k\geq1$. Lemma~\ref{lem:2.1} now gives equality. Finally, the initial-data expansion shows $M=\bigcup_k TN'_k=\bigcup_k M_k$.
\end{proof}
\pse

Set (cf.(\ref{1.17}))
\begin{align}\label{a1.29}
L' =L\cup (\{0\}\times J_1)\cup (J_3 \times\{0\} )\cup (\{0\}\times (n+J_3))\cup ((n+J_1) \times\{0\} ).\end{align} 
and $(L')^c =\{0, \dots, 2n\}^2  \setminus \{(0,0)\} \setminus L'$.

Note that $\mfk d'_1(\td{E}_{j,i} (f)) \leq \mfk d'_1(f)$ for $(j,i) \in (L')^c $ and $f\in M$ (cf. \eqref{a1.29}).
By Lemma~\ref{lem:5.1}, this gives the inclusion from right to left below. For the converse, choose an element of $M_0$ of maximal ordinary degree and compare the independent quadratic multiplication parts, as in Section 5. Thus
\begin{align*}
I_{(1)}=\operatorname{Span}\{\td{E}_{j,i}  \mid  (j,i)  \in (L')^c\}.
\end{align*}

We extend the parameter range of $\Delta_{i_1,i_2}^{\,j_1,j_2}$ in equation \eqref{a1.18} from $i_1,i_2, j_1,j_2 \in \ol{1,2n}$ to $i_1,i_2, j_1,j_2 \in \ol{0,2n} $, while keeping the defining form unchanged. As an element of the symmetric algebra, this determinant vanishes if its two row indices or its two column indices coincide. No vanishing convention is imposed for coincidences between a row and a column index.
For instance, $\Delta_{i_1,0}^{\,j_1,j_2} = \td{E}_{j_1,i_1} \td{E}_{j_2,0} - \td{E}_{j_1,0} \td{E}_{j_2,i_1}$.

Similarly, using the convention \eqref{a1.18}, we extend the domain of $(s,t)$ in Equations \eqref{a1.19} and \eqref{a1.20} from $L$ to $L'$, supplemented by the definitions
\begin{align}   \label{a1.30}
\ol{\Delta}^{n+j_1,s}_{i_1,0} =
 \td{E}_{s,0}\td{E}_{n+j_1,i_1} - \td{E}_{s, j_1}\td{E}_{0,i_1} +\td{E}_{s,i_1}\td{E}_{0,j_1}   ,
\end{align} and
  \begin{align}     \label{a1.31}
\ol{\Delta}^{ j_3,0}_{n+i_3,t} =
  \td{E}_{0,t}\td{E}_{ j_3,n+i_3}-\td{E}_{ j_3,0}\td{E}_{ i_3,t}+\td{E}_{ j_3,t} \td{E}_{i_3,0 }  .
\end{align}

Set
\begin{align*}
I'_{\Delta_2}= \{ \Delta_{i_1,i_2}^{j_1,j_2} \mid (j_1,i_1),(j_2,i_2),(j_2,i_1),(j_1,i_2)\in L'\setminus L_0 \} .\end{align*}
Under $\theta$, these minors have row indices in $A$ and column indices in $B\cup\{0\}$, or the transposed arrangement. When $n_1=n_2$, the latter index set is $\{0\}$, so $I'_{\Delta_2}=\{0\}$ in the symmetric algebra.

\begin{lemma}\label{lem:6.2}
For $\Delta_{i_1,i_2}^{j_1,j_2}\in I'_{\Delta_2}$,
\begin{equation}\label{a6.39}
\left(\Delta_{i_1,i_2}^{j_1,j_2}\right)^{\ell'+1}
\in I_{(2\ell'+2)}.
\end{equation}
In fact, $I'_{\Delta_2}\subseteq I_{(2)}$.
\end{lemma}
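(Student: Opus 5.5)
The plan is to prove the stronger assertion $I'_{\Delta_2}\subseteq I_{(2)}$ directly. The power statement \eqref{a6.39} then follows, because $I_{(\ast)}$ is the graded pieces of an ideal: if $\overline{\Delta}\in I_{(2)}$, then $\overline{\Delta}^{\ell'+1}\in\operatorname{Ann}_{S(\mfk g)}(\overline M)\cap S_{(2\ell'+2)}(\mfk g)=I_{(2\ell'+2)}$. By Lemma~\ref{lem:5.1}, which holds verbatim for $\mfk o(2n+1)$ and $M_0=V'_0$ since its proof uses only the filtration, it suffices to show the following. For each minor with the four pairs in $L'\setminus L_0$, some ordered lift $\eta\in U_2(\mfk g)$ satisfies $\eta(M_0)\subseteq M_1=V'_1$.

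First I will sort the minors. Under $\theta$, by the remark after the definition of $I'_{\Delta_2}$, the row indices lie in $A$ and the column indices lie in $B\cup\{0\}$, or the reverse. When no index equals $0$, the minor is already one of the even minors. The identities \eqref{eq:block-minors} are verified by the same substitution $F_{ab}=z_{\theta(a)}\partial_{z_b}-z_{\theta(b)}\partial_{z_a}$, which the paper notes also applies with $\theta(0)=0$. These identities express the minor as $F_{ab}F_{cd}$ or $F_{ab}(F_{cd}-g_{cd})$ with $a,b\in A$ and $c,d\in B$. The new cases are those where one column index is $0$, so that $d=0$ and $g_{c0}=0$ for $c\in B$. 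There the identity reads $F_{ac}F_{b0}-F_{a0}F_{bc}=F_{ab}F_{c0}$, possibly up to a commutator term, obtained by the same cancellation of double-derivative terms as in \eqref{eq:ordered-pf}.

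Next, I will check that the right-hand factor kills $M_0$ or lands in $M_0$. We have $F_{c0}=\pm\widetilde E_{\theta(c),0}$ or $\widetilde E_{0,c}$ with $c\in B=J_2\cup(n+J_2)$. By \eqref{6.2}--\eqref{6.3}, this operator is $x_c\partial_{x_0}$, $y_{c}\partial_{x_0}$, $x_0\partial_{x_c}$ or $x_0\partial_{y_c}$. Each of these does not raise $\mfk d'_1$, so it maps $TN'_0$ into $TN'_0=V'_0$; by \eqref{6.5}-type commutation through $T'$, it acts on the initial data. Then $F_{ab}$ with $a,b\in A$ is one of the multiplication operators in $W$, or a degree-preserving operator, so $F_{ab}F_{c0}(M_0)\subseteq V'_0W+V'_0\subseteq V'_1$. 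For the orientation where the rows lie in $B\cup\{0\}$, the second identity of \eqref{eq:block-minors} gives $F_{ab}(F_{c0}-g_{c0})$, and the same argument applies. Symbols of the two orientations agree up to sign in $S(\mfk g)$.

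The main obstacle is the ordered computation in the odd setting. The commutator $[F_{a0},F_{bc}]$ may produce a term $F_{ab}$-type or $F_{c0}$-type of filtration degree one. I must check that any such first-order correction lies in $U_1(\mfk g)$ and maps $M_0$ into $M_1$. This is automatic because $U_1(\mfk g)(M_0)=M_1$, so the corrections are harmless. The real care is needed in verifying that $F_{c0}$ preserves $V'_0$ when $\ell>0$ and the initial data involve $x_0$, that is, in the $T'_1$ part. I would handle this by applying $\widetilde E_{\theta(c),0}$ to $T'_\iota(f)$, which sends $\iota=1$ data to $\iota=0$ data of equal $\mfk d'_1$-weight and vice versa.
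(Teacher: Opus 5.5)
Your proposal is correct and is essentially the paper's own argument. You factor each minor through \eqref{eq:block-minors}, with $d=0$ allowed since all pairings between $A$ and $A\cup B\cup\{0\}$ vanish, as $F_{ab}F_{cd}$ or $F_{ab}(F_{cd}-g_{cd})$. You use that $F_{cd}$ with $c,d\in B\cup\{0\}$ preserves $M_0=TN'_0$, that $F_{ab}\in W$ carries $M_0$ into $M_1$, and you finish with Lemma~\ref{lem:5.1} and taking powers.

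Note that the weight argument alone settles your ``main obstacle'': $F_{cd}$ maps $M$ to $M$ and does not raise $\mfk d'_1$, since the variables indexed by $B\cup\{0\}$ all have $\mfk d'_1$-weight one. So no commutation of $F_{c0}$ through $T'$ is needed. The same argument covers $c,d\in B$ as well, which you should state explicitly. In the odd case with $\ell>0$, $M_0$ may contain middle variables, so the even-case fact $F_{cd}(v_0)=0$ is not available there.
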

\begin{proof}
Let $v_0\in M_0$. For $a,b\in J_1$, put $W_{ab}=x_a y_b-x_b y_a$. For $c,d\in J_2$, direct calculation gives
\[
\Delta_{a,b}^{c,d}=W_{ab}(x_c\partial_{y_d}-x_d\partial_{y_c}).
\]
The operator in parentheses preserves $M_0$, and multiplication by $W_{ab}$ sends $M_0$ into $M_1$. Hence this minor sends $M_0$ into $M_1$. Here and below we may use transposed minors, which give the same elements of $S(\mfk g)$ up to sign.

For the two cases involving the index zero, we have
\begin{align*}
\Delta_{c,0}^{n+b,n+a}
&=W_{ab}(x_0\partial_{x_c}-y_c\partial_{x_0}),\\
\Delta_{a,b}^{c,0}
&=W_{ab}(x_c\partial_{x_0}-x_0\partial_{y_c}).
\end{align*}
Write $v_0=T'(f+x_0g)$. Taking the value and first $x_0$-derivative at zero, and using $\partial_{x_0}^2v_0=2\widetilde\Delta(v_0)$, gives
\begin{align*}
\Delta_{c,0}^{n+b,n+a}(v_0)
&=W_{ab}T'\bigl(-y_cg+x_0(\partial_{x_c}f-2y_c\widetilde\Delta(f))\bigr),\\
\Delta_{a,b}^{c,0}(v_0)
&=W_{ab}T'\bigl(x_cg+x_0(2x_c\widetilde\Delta(f)-\partial_{y_c}f)\bigr).
\end{align*}
The corresponding statements for $J_3$ follow by the same calculation.

To include all the index choices, use \eqref{eq:block-minors} with $c,d\in B\cup\{0\}$. Each $F_{cd}$ preserves $M_0$: it commutes with the transformed Laplace operator, preserves the signed degree, and preserves the bound defining $TN'_0$, since all the variables indexed by $B\cup\{0\}$ have $\mfk d'_1$-weight one. Thus both $F_{cd}$ and $F_{cd}-g_{cd}$ preserve $M_0$. The factor $F_{ab}$ sends $M_0$ into $M_1$. Every minor therefore sends $M_0$ into $M_1$, and Lemma~\ref{lem:5.1} gives $I'_{\Delta_2}\subseteq I_{(2)}$. Taking the $(\ell'+1)$-st power proves \eqref{a6.39}.
\end{proof}

\begin{lemma}\label{lem:6.3}
Let $J'$ be generated by $I_{(1)}$, $I'_{\Delta_2}$ and $\overline\Delta^{n+j_1,s}_{i_1,t}$, $\overline\Delta^{j_3,s}_{n+i_3,t}$, with $i_1,j_1\in J_1$, $i_3,j_3\in J_3$ and $(s,t)\in L'$. Then
\begin{equation}\label{a6.47}
I_{(p)}=J'\cap S_{(p)}(\mfk g)\qquad(p\geq1).
\end{equation}
\end{lemma}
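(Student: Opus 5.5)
The plan follows the proof of Lemma~\ref{lem:5.2}: first show $J'\subseteq\operatorname{Ann}_{S(\mfk g)}(\overline M)$, then the reverse inclusion by a highest degree comparison against the kernel description of Lemma~\ref{lem:4.4}, now enlarged by the variable $x_0$.

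\textbf{The inclusion $J'\subseteq\operatorname{Ann}_{S(\mfk g)}(\overline M)$.} This uses results already established.
\begin{itemize}
\item The linear part comes from the description of $I_{(1)}$ just proved.
\item The minors lie in $I_{(2)}$ by Lemma~\ref{lem:6.2}.
\item For the three-term quadrics $\overline\Delta^{n+j_1,s}_{i_1,t}$ and $\overline\Delta^{j_3,s}_{n+i_3,t}$ with $(s,t)\in L'$, including the new forms \eqref{a1.30}--\eqref{a1.31}, I will use the ordered identity \eqref{eq:ordered-pf}. It holds in Section~6 with $\theta(0)=0$. For these index choices every contraction term $g_{ab}$ vanishes, and so does every commutator needed to order the factors: the pairs $(0,i_1)$, $(0,n+i_3)$ and their partners never satisfy $a=\theta(b)$. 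Hence these operators are identically zero on $\msr A'$. Their symbols then lie in $I_{(2)}$ by Lemma~\ref{lem:5.1}.
\end{itemize}

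\textbf{The reverse inclusion: setting up the comparison.}
\begin{itemize}
\item Take $\xi\in\mfk k_{(p)}$ and reduce modulo $\langle I_{(1)}\rangle$. Write $\xi$ as a combination of ordered products of $\td E_{j,i}$ with $(j,i)\in L'\setminus\{\text{degree-preserving}\}$. These are the seven families of \eqref{a5.20}, together with four new families: $\td E_{0,i_1}$ ($i_1\in J_1$), $\td E_{0,n+i_3}$, $\td E_{j_3,0}$, $\td E_{n+j_1,0}$, and the $J_2$-partners with a zero index.
\item Choose $0\neq v\in M_0$ of maximal ordinary degree (counting $x_0$ with degree one). Then $\xi(v)\in M_{p-1}$ has no component of degree $\deg v+2p$.
\item The top-degree part of each generator is a multiplication operator. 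By \eqref{6.2} it is $-x_0x_{i_1}$ or $-x_0y_{i_3}$ for the new families, and for the old families it is given by the formulas behind $h_l$.
\item Comparing coefficients of monomials in the middle variables $x_i,y_i$ ($i\in J_2$) yields a relation $\phi'(h')=0$. Here $\phi'$ extends $\phi$ by letting $x_0$ play the role of an extra "index" attached to both $J_1'$ and $J_3'$: $x_0x_{i_1}$ is $z$-like and $x_0 y_{i_3}$ is $z$-like.
\end{itemize}

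\textbf{The reverse inclusion: the kernel and lifting.}
\begin{itemize}
\item I then need an odd analogue of Lemma~\ref{lem:4.4}: the kernel of $\phi'$ is generated by $\mfk R$ together with the three-term relations involving the new symbols. These are precisely the images under $\Theta$ of \eqref{a1.30}--\eqref{a1.31}, i.e. the $4\times4$ Pfaffians with index $0$.
\item I would prove this by the same straightening. Treat $0$ as an additional row/column index placed between the $J_1$ block and the $J_3$ block, and extend the chain conditions accordingly. Linear independence of $\Omega$-chain-free products then follows from Lemma~\ref{lem:4.3}. The reason is that a monomial in $x_0$ times chain-free data is determined by its $x_0$-degree together with the $\msr A$-part, since $x_0$ appears only linearly in each generator image.
\item Finally I lift the polynomial-coefficient relations to $S(\mfk g)$ exactly as in the proof of Lemma~\ref{lem:5.2}. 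Reassigning middle-variable factors costs only minors in $I'_{\Delta_2}$, and the relations in $\ker\phi'$ correspond generator by generator to the listed quadrics. This gives $\overline\xi\in J'$.
\end{itemize}

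\textbf{Main obstacle.} I expect the hardest step to be the top-degree comparison for the new families. The reason is that $M_0=V_0'$ contains $T'$-images with $x_0$-tails of higher $x_0$-degree, so "maximal ordinary degree" must be chosen carefully. I would first try choosing $v=T'_\iota(f)$ with $f$ a monomial of maximal $\mfk d_1'$, and compare only the coefficient of the lowest power of $x_0$ in $\xi(v)$. In that coefficient, the $x_0$-dependence of the top term is exactly the explicit factor $x_0^{\#\text{new}}$, and $\widetilde\Delta$-corrections are of strictly higher $x_0$-order. If this fails when $n_1=n_2$ and $\ell>0$, where $M_0$ is generated by $T'_1(y_1^{\ell-1})$, I would instead use the power statement \eqref{a6.39}, which suffices after taking radicals.
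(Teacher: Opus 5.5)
Your overall architecture matches the paper's. The inclusion $J'\subseteq\operatorname{Ann}_{S(\mathfrak g)}(\overline M)$ comes from the description of $I_{(1)}$, Lemma~\ref{lem:6.2} and \eqref{eq:ordered-pf}; the converse comes from the top ordinary-degree component, the kernel of $\phi$ and coefficient lifting. But the one step where you depart from the paper fails as stated. You keep $x_0$ inside new $z$-like symbols, say $z'_i\mapsto x_0x_i$ ($i\in J_1$) and $z''_j\mapsto x_0y_j$ ($j\in J_3$). You strip only the $J_2$ variables, and you claim that $\ker\phi'$ is generated by $\mathfrak R$ and the three-term relations coming from \eqref{a1.30}--\eqref{a1.31}.

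Take $i,j\in J_1$, $c\in J_2$ and the minor $\widetilde E_{n+i,0}\widetilde E_{n+j,c}-\widetilde E_{n+j,0}\widetilde E_{n+i,c}\in I'_{\Delta_2}$. Its top part is $(x_0x_i)(x_jy_c)-(x_0x_j)(x_iy_c)$, so your comparison produces $h'=z'_ix_j-z'_jx_i\in\ker\phi'$. This is a degree-two binomial, and no generator in your list contains a monomial $z'x$: the images of \eqref{a1.30}--\eqref{a1.31} only involve $z'\omega$, $z''\omega$, $z'z$, $z''z$. So $h'$ is not in the ideal you describe. It lifts to a minor with column indices $\{0,c\}$, not to \eqref{a1.30}--\eqref{a1.31}. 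The same pair breaks your independence argument. The monomials $z'_ix_j$ and $z'_jx_i$ have equal $x_0$-degree and equal $\msr A$-part, so chain-free data in the enlarged algebra are not determined by these, and Lemma~\ref{lem:4.3} does not transfer as claimed.

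The paper needs no new kernel lemma. It treats $x_0$ as one more middle variable (the index $0$ joins the block $B$) and separates monomials in $x_0$ and in $x_s,y_s$ ($s\in J_2$) simultaneously. There are only two new families, $\widetilde E_{n+j_1,0}$ and $\widetilde E_{j_3,0}$. Indeed $\widetilde E_{0,i_1}=-\widetilde E_{n+i_1,0}$ and $\widetilde E_{0,n+i_3}=-\widetilde E_{i_3,0}$, while $\widetilde E_{0,c}$ for $c\in J_2$ is degree-preserving and lies in $I_{(1)}$. These families contribute the existing generators $x_{j_1}=z_{n+1,j_1}$ and $y_{j_3}=z_{j_3,0}$ of $\msr F$, so Lemma~\ref{lem:4.4} applies verbatim. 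The lifting then uses groups (6)--(9) of Appendix~\ref{app:relations} with $c=x_0$, which give \eqref{a1.30}--\eqref{a1.31}, and group (10) with $x_0$ as a middle factor. Reassigning $x_0$ among outer factors therefore costs exactly the minors of $I'_{\Delta_2}$ with a column index $0$, which is what your binomials are.

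Your ``main obstacle'' is also not one. The $x_0$-tails of $T'$ can even raise ordinary degree (through $x_0^2x_i\partial_{y_i}$, $i\in J_1$). One simply takes $d_0$ maximal on $M_0$ and $v$ with nonzero component $v_{d_0}$, wherever that component comes from. The degree-$(d_0+2p)$ part of $\xi(v)$ is then the product of the multiplication symbols times $v_{d_0}$, which cancels in the domain $\msr A'$. Your alternative of comparing the lowest power of $x_0$ has no vanishing statement behind it. Falling back on \eqref{a6.39} and radicals addresses the other inclusion, already settled by $I'_{\Delta_2}\subseteq I_{(2)}$, and cannot give the exact equality \eqref{a6.47}.
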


\begin{proof}
We use the seven groups in \eqref{a5.20} together with the two groups involving $x_0$. Work modulo $\langle I_{(1)}\rangle$.

Let $\xi \in \mfk k_{(p)}$ and write
\begin{align*}
\xi =  \sum_{l=1}^m c_l\left(
\prod_{r=1}^7 \left(\prod_{s=1}^{k_{r,l}} \td{E}_{j_{r,l,s},i_{r,l,s}}\right)
\right)
\left(\prod_{s=1}^{k_{8,l}} \td{E}_{j_{8,l,s},0}\right)
\left(\prod_{s=1}^{k_{9,l}} \td{E}_{j_{9,l,s},0}\right)
 ;
\end{align*}
where $m,k_{1,l},\dots,k_{9,l} \in\mbb N$ satisfy $\sum_{r=1}^9k_{r,l} =p$, the range of $(j_{r,l,s},i_{r,l,s})$ is the same as that in equations \eqref{a5.19}--\eqref{a5.20} when $r=1,\dots,7$ and
\begin{align*}\begin{split} j_{8,l,s}\in (n+J_1) ,\,j_{9,l,s'}\in J_3 ,\,c_l\in \mathbb{F} ,\quad \
 \text{for each } l \in \ol{1,m}  , \ s\in \ol{1,k_{8,l}}, \ s'\in \ol{1,k_{9,l}}.
\end{split} \end{align*}

Let $d_0$ be the maximal ordinary degree of an element of $M_0$ and choose $v\in M_0$ with a nonzero degree-$d_0$ component $v_{d_0}$. Since every generator raises ordinary degree by at most two, $M_{p-1}$ has degree at most $d_0+2p-2$. Therefore the degree-$d_0+2p$ component $h=\sum_lh_l$ of $\xi(v)$ is zero, where
\begin{align*} \begin{split}
h_l= (-1)^{k_{1,l} +k_{7,l}}&c_l\Big(
\prod_{s=1}^{k_{1,l}} x_{j_{1,l,s}}x_{i_{1,l,s}}
\prod_{s=1}^{k_{2,l}} y_{j_{2,l,s}}y_{i_{2,l,s}}
\prod_{s=1}^{k_{3,l}}( y_{j_{3,l,s}}y_{i_{3,l,s}} -x_{j_{3,l,s}}x_{i_{3,l,s}})\\
&
\prod_{s=1}^{k_{4,l}} (x_{j_{4,l,s}-n}y_{i_{4,l,s}}-y_{j_{4,l,s}-n}x_{i_{4,l,s}})
\prod_{s=1}^{k_{5,l}} x_{j_{5,l,s}-n}y_{i_{5,l,s}}\prod_{s=1}^{k_{7,l}} x_{j_{7,l,s} }y_{i_{7,l,s}- n}\\
&
\prod_{s=1}^{k_{6,l}} (x_{i_{6,l,s}-n }y_{j_{6,l,s} }-x_{j_{6,l,s} }y_{i_{6,l,s}-n})
\prod_{s=1}^{k_{8,l}} x_{j_{8,l,s} -n}x_{0}\prod_{s=1}^{k_{9,l}} y_{j_{9,l,s} }x_{0}
\Big)v_{d_0}.
\end{split}\end{align*}
Cancel $v_{d_0}\neq0$ and separate the monomials in $x_0$ and in the middle variables. In each group, \eqref{a5.22} holds and $k_{8,1}+k_{9,1}=k_{8,l'}+k_{9,l'}$ for $2\leq l'\leq m$.
Set
\begin{align*} \begin{split}
&h'=\sum_{l=1}^m c_l\Big[\Big(\prod_{s=1}^{k_{1,l}}  x_{i_{1,l,s}}\Big)\Big(\prod_{s=1}^{k_{5,l}} x_{j_{5,l,s}-n}\Big)
\Big(\prod_{s=1}^{k_{2,l}} y_{j_{2,l,s}} \Big)\Big(\prod_{s=1}^{k_{7,l}}  y_{i_{7,l,s}- n}\Big)
 \Big(\prod_{s=1}^{k_{3,l}}z_{  j_{3,l,s}, i_{3,l,s}}  \Big)\\
&
\cdot \Big(
\prod_{s=1}^{k_{4,l}}  \omega_{j_{4,l,s}-n,i_{4,l,s}}   \Big)\Big(
\prod_{s=1}^{k_{6,l}}  \omega_{i_{6,l,s}-n,j_{6,l,s} }
\Big) \Big(
\prod_{s=1}^{k_{8,l}}  x_{j_{8,l,s}-n }
\Big) \Big(
\prod_{s=1}^{k_{9,l}}  y_{j_{9,l,s} }
\Big)
\Big]  \in \msr F_p,
\end{split}\end{align*}
For each group, its contribution is $\pm\phi(h')gv_{d_0}$, with $g$ a nonzero monomial in $x_0$ and the middle variables. Thus $\phi(h')=0$.
By Lemma \ref{lem:4.4}, we have  $h'\in \msr R \cap \msr F_p$.

Denote $\ol{\xi} =\xi +U_{p-1}(\mfk g) \in S_{(p)} \left( \mathfrak{g} \right) $.
The five additional groups are listed at the end of Appendix~\ref{app:relations}. The coefficient-lifting argument in Lemma~\ref{lem:5.2} applies with $x_0$ included among the middle variables. Hence the vanishing relation $h'$ lifts to $J'$.

In conclusion, we have
\begin{align*}
\ol{\xi}  \in
 \left\langle \left. \ol{\Delta}^{n+j_1,s}_{i_1,t},\ol{\Delta}^{j_3,s}_{n+i_3,t}   \right\vert  i_1,j_1\in J_1 ; i_3,j_3 \in   J_3; (s,t) \in L' \right\rangle+\langle I'_{\Delta_2}\rangle .
\end{align*}

For $(s,t)\in L'$, the operators in \eqref{a1.19}--\eqref{a1.20} and their extensions \eqref{a1.30}--\eqref{a1.31} vanish. This follows from \eqref{eq:ordered-pf}, including the cases involving the index zero. Together with Lemma~\ref{lem:6.2}, this gives the converse inclusion. Therefore, \eqref{a6.47} holds.
\end{proof}

The following theorem corresponds to Theorem 2 as stated in the Introduction.

\begin{theorem}\label{the:odd}
For $\mfk{g} = \mathfrak{o}(2n+1)$ and $\ell \in \mbb{Z}$, the variety $\msr{V}(\msr{H}'_{\la\ell \ra})$ is given by
\begin{align}\label{a6.60}
\begin{split}
\msr{V}(\msr{H}'_{\la\ell \ra}) = \msr{V} \Big( \big\la &\td{E}_{s',t'} , \Delta_{i,i'}^{j,j'} ,\ol{\Delta}^{n+j_1,s}_{i_1,t},\ol{\Delta}^{ j_3,s}_{n+i_3,t} \mid (s,t)\in L', (s',t')\in (L')^c; \\
&\theta(j),\theta(j')\in A;\ i,i'\in B\cup\{0\}; i_1,j_1\in J_1; i_3,j_3\in J_3 \big\ra \Big).
\end{split}
\end{align}
Moreover,
\begin{align} \label{b6.62}
\msr{V}(\msr{H}'_{\la\ell \ra}) \cong \msr{V}_1(S_0') \cap {\bf Pf}(2n+1, 2).
\end{align}
\end{theorem}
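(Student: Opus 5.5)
The proof will follow the pattern of Theorem~\ref{the:5.1}, with the index $0$ added. The first assertion \eqref{a6.60} comes from Lemma~\ref{lem:6.3}. By that lemma, $\operatorname{Ann}_{S(\mfk g)}(\overline M)=\bigoplus_p I_{(p)}$ is exactly the homogeneous ideal $J'$. Its generators are the linear forms $\td E_{s',t'}$ with $(s',t')\in(L')^c$ (this is the description of $I_{(1)}$), the minors in $I'_{\Delta_2}$, and the three-term quadrics $\ol\Delta$ with $(s,t)\in L'$, including the extensions \eqref{a1.30}--\eqref{a1.31}. Under $\theta$, the condition defining $I'_{\Delta_2}$ becomes $\theta(j),\theta(j')\in A$ and $i,i'\in B\cup\{0\}$, up to transposition. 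So $J'$ is precisely the ideal written in \eqref{a6.60}. The zero set of $J'$ equals that of its radical, so the reduced scheme $\msr V(\msr H'_{\la\ell\ra})$ is cut out by these generators. Lemma~\ref{lem:6.2} is needed only to know that the minors really lie in the annihilator. Here $M_0=V_0'$ and Proposition~\ref{prop:odd-filtration} supply \eqref{1.10}, so the answer does not depend on $\ell$ except through $M_0$. In the case $n_1=n_2$ we have $I'_{\Delta_2}=\{0\}$, and the statement still holds with that family of generators empty.

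For \eqref{b6.62}, I will extend $\Theta$ to an isomorphism $S(\mfk o(2n+1))\to\mbb F[\mathrm{Skew}(2n+1)]$ with indices $0,\dots,2n$, given by $\Theta(\td E_{i,j})=X_{\theta(i),j}$ and $\theta(0)=0$. The first check is that $\theta$ carries $(L')^c$ onto $S_0'$. The pairs added in \eqref{a1.29} are $\{0\}\times J_1$, $J_3\times\{0\}$, $\{0\}\times(n+J_3)$ and $(n+J_1)\times\{0\}$, so the pairs involving $0$ that remain in $(L')^c$ are those whose other index lies in $B=J_2\cup(n+J_2)$. After applying $\theta$, these are exactly $\{0\}\times\ol{n_1+1,n+n_2}$ and its transpose, once we intersect with the complement of $S_0$. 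This is the only combinatorial check needed, and it reduces to the even-case identity $\theta(L^c)=S_0$. Next, $\Theta$ sends each $\ol\Delta$, including \eqref{a1.30}--\eqref{a1.31}, to a Pfaffian $\kappa^{j,s}_{i,t}$ of a principal $4\times4$ submatrix. Each minor $\Delta^{j,j'}_{i,i'}$ with $i,i'\in B\cup\{0\}$ agrees with $\kappa^{\theta(j),\theta(j')}_{i,i'}$ modulo $\la X_{rs}:(r,s)\in S_0'\ra$, because the missing term involves $X_{ii'}$ with $i,i'\in B\cup\{0\}$, and that coordinate vanishes on $\msr V_1(S_0')$.

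The converse is the main obstacle. For every quadruple of indices in $\ol{0,2n}$, I must show that $\Theta^{-1}(\kappa)$ is, modulo the linear coordinate ideal, either zero or one of the generators up to sign. Then $\Theta(J')$ and the ideal of $4\times4$ Pfaffians together with the coordinate equations have the same radical. The rank $\le2$ locus ${\bf Pf}(2n+1,2)$ is set-theoretically defined by the $4\times4$ Pfaffians, so this yields \eqref{b6.62}. For the case analysis I sort the four indices by the blocks $A$, $B$, $C$, $\{0\}$. Any index in $C$ kills all surviving coordinates in its row except the $C\times A$ entries, and $B\times B$, $B\times\{0\}$ and $B\times C$ entries vanish. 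The surviving nonzero Pfaffians therefore have at least two indices in $A$, and the other two indices lie in $A\cup B\cup\{0\}$. These types match exactly the $\ol\Delta$ families (at least three indices in $A$, or two in $A$ paired with $C$-type columns through $L'$) and the minors (two in $A$, two in $B\cup\{0\}$). I expect to check each type as in the even case, now with $\{0\}$ treated as a further $B$-like index. I will first do this check for the case with one zero index, since that is where \eqref{a1.30}--\eqref{a1.31} enter.
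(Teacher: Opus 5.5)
Your proposal is correct and follows the paper's proof: \eqref{a6.60} comes from Lemma~\ref{lem:6.3}, and \eqref{b6.62} comes from $\Theta'$, the identity $\theta((L')^c)=S_0'$, and matching each $4\times4$ Pfaffian, modulo the coordinate ideal, with a generator up to sign; your block count (no index in $C$, at least two in $A$; three or more in $A$ gives a $\ol\Delta$, exactly two gives a minor with columns in $B\cup\{0\}$) spells out the step the paper leaves as ``it can be verified''. There is one bookkeeping slip to fix: the pairs with a zero index left in $(L')^c$ are $(0,t)$ with $t\in B\cup C$ and $(s,0)$ with $s\in A\cup B$, not only those whose other index lies in $B$, and this is exactly why their $\theta$-image is $\{0\}\times\ol{n_1+1,n+n_2}$ and its transpose (likewise, after $\theta$ every coordinate with an index in $C$ vanishes, so your ``$C\times A$ entries survive'' describes only the $\td E$-indexing).
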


\begin{proof}
By Lemmas \ref{lem:6.2} and \ref{lem:6.3}, the identity \eqref{a6.60} holds.

Note that $\theta((L')^c)=S_0'$.
Analogous to the proof of Theorem \ref{the:5.1}, we define the homomorphism $\Theta'$ from $S(\mfk{o}(2n+1))$ to $\mathbb F[\mathrm{Skew}(2n+1)]=\mathbb F[X_{ij}:0\leq i<j\leq2n]$, with $X_{ji}=-X_{ij}$ and $X_{ii}=0$, by setting $\Theta'(\td{E}_{i,j}) = X_{\theta(i),j}$. For $i,j,s,t \in \{0, \dots, 2n\}$, we define
\begin{align*}
\kappa_{i,t}^{j,s} = X_{ji}X_{st} - X_{js}X_{it} + X_{jt}X_{is}
\end{align*}
as the Pfaffian polynomial associated with the $4 \times 4$ principal submatrices of a matrix $X = (X_{ij}) \in \mathrm{Skew}(2n+1)$.

Observing that the relations \eqref{b5.49}--\eqref{b5.51} remain valid, we have
\begin{align*}
\kappa_{i_1,0}^{j_1,\theta(s)} = \Theta'\left( \ol{\Delta}^{n+j_1,s}_{i_1,0} \right), \quad
\kappa_{i_3+n,t}^{j_3+n,0} = \Theta'\left( \ol{\Delta}^{ j_3,0}_{n+i_3,t} \right)
\end{align*}
for $i_1, j_1 \in J_1$, $i_3, j_3 \in J_3$, and $(s,0), (0,t) \in L'$.

Furthermore, it can be verified that $(\Theta')^{-1}(\kappa_{i,t}^{j,s})$ is always either one of the aforementioned generators (possibly differing by a sign) or zero, provided that $X_{ij} = 0$ for all $(i,j) \in \theta((L')^c)=S_0'$. Consequently, \eqref{b6.62} holds.
\end{proof}

This completes the proof of Theorem 2 stated in the introduction.

\psp

\psp

\appendix
\section{The relations used in the highest degree argument}\label{app:relations}
We give the ten groups used in Lemma~\ref{lem:5.2}. This is the same grouping by $z$, $\omega$, $x_i$ and $y_j$ as in Section 4. The factors $c$ below are any of the middle variables $x_s,y_s$ with $s\in J_2$. All relations are considered modulo the zero coordinates. A relation is lifted by replacing each product with the corresponding oscillator symbol from \eqref{a5.20}; different assignments of the middle factors are handled in group (10).

(1) For $r\in J_3$ and $i,j,t\in J_1$,
\[
z_{r,i}\omega_{t,j}-z_{r,j}\omega_{t,i}+z_{r,t}\omega_{j,i}=0.
\]
This is the relation $\Lambda(z_{r,i}\omega_{t,j})$ and lifts to $\overline\Delta^{n+t,r}_{j,i}$, up to the sign determined by the order of the two alternating indices.

(2) For $i\in J_1$ and $r,s,t\in J_3$,
\[
z_{r,i}\omega_{t,s}-z_{t,i}\omega_{r,s}+z_{s,i}\omega_{r,t}=0.
\]
It is $\Lambda(z_{r,i}\omega_{t,s})$ and gives the second family of $\overline\Delta$.

(3) For $i,j\in J_1$ and $r,s\in J_3$,
\[
\omega_{s,r}\omega_{j,i}+z_{s,j}z_{r,i}-z_{s,i}z_{r,j}=0.
\]
This is the mixed $\omega_1\omega_3$ relation. In Pfaffian coordinates its four indices consist of two from $J_1$ and two from $n+J_3$.

(4) For $i,j,s,t\in J_1$,
\[
\omega_{t,j}\omega_{s,i}-\omega_{t,i}\omega_{s,j}+\omega_{t,s}\omega_{i,j}=0.
\]
All four coordinate indices lie in $J_1$. These are the relations between the $W_1$ factors.

(5) For $i,j,s,t\in J_3$,
\[
\omega_{t,j}\omega_{s,i}+\omega_{t,s}\omega_{i,j}-\omega_{t,i}\omega_{s,j}=0.
\]
All four coordinate indices lie in $n+J_3$. These are the relations between the $W_3$ factors.

(6) For $i,j,t\in J_1$, multiply the relation involving the extra column by $c$:
\[
(x_i\omega_{t,j}-x_j\omega_{t,i}+x_t\omega_{j,i})c=0.
\]
When $c=y_s$, it lifts, with the same alternating-index convention, to $\overline\Delta^{n+t,n+i}_{j,s}$; when $c=x_s$, it lifts to $-\overline\Delta^{n+t,s}_{j,i}$.

(7) For $r\in J_3$ and $i,j\in J_1$, the relation is
\[
(y_r\omega_{j,i}-x_jz_{r,i}+x_iz_{r,j})c=0.
\]
Its lift has one middle coordinate and the three active coordinates $i,j,n+r$. This includes the relation originally written using $\omega_{n+1,r}=-y_r$.

(8) For $i\in J_1$ and $r,t\in J_3$, we have
\[
(x_i\omega_{t,r}-y_tz_{r,i}+y_rz_{t,i})c=0.
\]
Its four Pfaffian indices are $i,n+r,n+t$ and the chosen middle coordinate.

(9) For $r,s,t\in J_3$, the relation is
\[
(y_r\omega_{t,s}-y_t\omega_{r,s}+y_s\omega_{r,t})c=0.
\]
It gives the remaining three-active-one-middle Pfaffians.

(10) Finally, the assignments of middle factors give two-term minors. If $a_i,a_j$ denote any of the variables $x_i$ ($i\in J_1$), $y_i$ ($i\in J_3$), and $c_s,c_t$ are middle variables, then
\[
(a_ic_s)(a_jc_t)-(a_ic_t)(a_jc_s)=0.
\]
These include mixed $x_i,y_j$ pairs as well as pairs from the same index set. In skew coordinates the two row indices are in $A$ and the two column indices are in $B$. The omitted Pfaffian term contains a $B\times B$ coordinate. This is the block condition used in the definition of $I_{\Delta_2}$. Transposing the block gives the same relations up to sign, so one orientation suffices.

The five additional groups in Lemma~\ref{lem:6.3} are obtained by setting $c=x_0$ in groups (6), (7), (8), (9), and by allowing $x_0$ as either middle factor in group (10). Explicitly, the four three-active-one-middle relations are
\begin{align*}
&(x_i\omega_{t,j}-x_j\omega_{t,i}+x_t\omega_{j,i})x_0=0,\\
&(y_r\omega_{j,i}-x_jz_{r,i}+x_iz_{r,j})x_0=0,\\
&(x_i\omega_{t,r}-y_tz_{r,i}+y_rz_{t,i})x_0=0,\\
&(y_r\omega_{t,s}-y_t\omega_{r,s}+y_s\omega_{r,t})x_0=0.
\end{align*}
They lift to the quadrics involving index zero in \eqref{a1.30}--\eqref{a1.31}. The fifth group is
\[
(a_ic_s)(a_jx_0)-(a_ix_0)(a_jc_s)=0.
\]
The coefficient-lifting argument at the end of Lemma~\ref{lem:5.2} proves that these lists account for arbitrary polynomial relations, not just for the displayed generators.

\section{The odd filtration calculation}\label{app:odd-filtration}
Here we give the full lowering calculation for Proposition~\ref{prop:odd-filtration}. The notation, the operators and the order of the cases are those of Section 6.
\begin{proof}
Take $\mfk g=\mfk{o}(2n+1)$, $\msr C=\msr A'$ , $\msr C_0={\msr A}_{\la \ell \ra}+{\msr A}_{\la \ell-1 \ra}x_0$, $\Gamma=\mbb Z$, $k_0=\ell'$, $T=T'$ and $P_1=W$ in Lemma \ref{lem:2.1}. We have verified that ${\mfk d}'_1$, $T'$ and $V_{k,\ell'}=V'_k$ satisfy the hypotheses of Lemma \ref{lem:2.1}. Thus we have $M_k \subseteq V'_k$ for $k\in \mbb N$ by Lemma \ref{lem:2.1}.

For an integer $k \geq 1$, we take $T'(x^\al y^\be) \in TN'(k)$, where $x^\al y^\be \in  \msr  A_{\la \ell \ra}+\msr A_{\la \ell-1 \ra}x_0 $.
Since $k\geq 1$, we have
\begin{align*} \begin{split}
\ell-\al_0+ \sum_{i=  1}^{n_1 } \al_i+\sum_{i=n_2+1}^{n } \be_i  =& \sum_{i= n_1+1}^{n} \al_i+\sum_{i=1}^{n_2 } \be_i \\>& \ell'-\al_0-\sum_{i= n_2+ 1}^{n } \al_i-\sum_{i= 1}^{n_1 } \be_i .
\end{split}\end{align*}
When $ \sum_{i=n_2+1}^n \al_i=\sum_{i=1}^{n_1} \be_i =0$, we have $ \sum_{i=0}^{n_1 } \al_i>0 $ or $\sum_{i=n_2+1}^{n } \be_i>0$. In particular, $\sum_{i=n_2+1}^n \al_i+\sum_{i=1}^{n_1} \be_i  > 0$  if $n_1=n_2$ and $\al_0=0$.

Next we verify that $T'(x^\al y^\be) \in  U_1(\mfk{o}(2n+1))( V_{k-1}')$ so that \eqref{2.13} holds. We consider the following ten cases. The calculations use the initial-data identity $T'(f)\equiv f\pmod{\langle x_0^2\rangle}$ and its uniqueness.
\psp

(1) When $\al_0=0$ and $\sum_{i=1}^{n_1} \be_i>0$, we write $x^\al y^\be=y_{i_1} g_1$, where $i_1\in J_1$.\pse

Since $\td{E}_{0,i_1}(T'(-x_0g_1)) \in M=T'({\msr A}_{\la \ell \ra}+{\msr A}_{\la \ell-1 \ra}x_0)$, there exists $f' \in {\msr A}_{\la \ell \ra}+{\msr A}_{\la \ell-1 \ra}x_0$ such that $\td{E}_{0,i_1}(T'(-x_0g_1)) =T'(f')$ and $\td{E}_{0,i_1}(T'(-x_0g_1)) \equiv f' \mod \la x_0^2 \ra$.
According to \eqref{6.2}--\eqref{6.3},
\begin{align*} \begin{split}
\td{E}_{0,i_1}(T'(-x_0g_1)) &\equiv(-x_0 x_{i_1} -y_{i_1} \ptl_{x_0})(-x_0g_1) \mod \la x_0^2 \ra \\
&\equiv y_{i_1}g_1\mod \la x_0^2 \ra.
\end{split}\end{align*}
Thus $T'(x^\al y^\be)=\td{E}_{0,i_1}(T'(-x_0g_1))$.
Note that $\mfk d'_1(-x_0g_1) =\mfk d'_1(y_{i_1}g_1)-1=k-1+\ell' $, and so $T'(-x_0g_1)\in V_{k-1}'$. Then
$T'(x^\al y^\be) \in U_1(\mfk{o}(2n+1))( V_{k-1}')$.\pse

(2) When $\al_0=0$ and $\sum_{i=n_2+1}^n \al_i>0$, we write $x^\al y^\be=x_{i_3} g_1$, where $i_3\in J_3$.\pse

According to \eqref{6.2}--\eqref{6.3},
\begin{align*} \begin{split}
\td{E}_{0,n+i_3}(T'(-x_0g_1)) &\equiv(-x_0 y_{i_3} -x_{i_3} \ptl_{x_0})(-x_0g_1) \mod \la x_0^2 \ra \\
&\equiv x_{i_3} g_1\mod \la x_0^2 \ra.
\end{split}\end{align*}
Then $T'(x^\al y^\be)=\td{E}_{0,n+i_3}(T'(-x_0g_1))$ and we have $T'(x^\al y^\be) \in U_1(\mfk{o}(2n+1))( V_{k-1}')$.\pse

(3) Assume $\al_0>0$, $n_1<n_2$, and $\sum_{i=1}^{n_1} \be_i>0$.\pse
 
 Under these conditions, we can write $x^\al y^\be = x_0 y_{i_1} g_1$ for some $i_1 \in J_1$.
From \eqref{6.2}--\eqref{6.3} we obtain
\begin{align*} \begin{split}
\td{E}_{0,i_1}(T'(x_{n_2} y_{n_2} g_1)) &\equiv(-x_0 x_{i_1} -y_{i_1} \ptl_{x_0})(x_{n_2} y_{n_2} g_1+
 x_0^2 \td{\Delta} (x_{n_2} y_{n_2} g_1)
) \mod \la x_0^2 \ra\\
&\equiv   -x_0 x_{i_1}x_{n_2} y_{n_2} g_1-2x_0 y_{i_1}\td{\Delta} (x_{n_2} y_{n_2} g_1) \mod \la x_0^2 \ra
\end{split}\end{align*}
Consequently,
\begin{align}\label{6.21} \begin{split} \td{E}_{0,i_1}(T'(x_{n_2} y_{n_2} g_1))
=&T'\left(-x_0 x_{i_1}x_{n_2} y_{n_2} g_1\right)- T'\left(2x_0 y_{i_1}\td{\Delta} (x_{n_2} y_{n_2} g_1) \right)  \\
=&-T'\left(x_0 x_{i_1}x_{n_2} y_{n_2} g_1\right)+2(\al_{n_2}+1)(\be_{n_2}+1)T'(x^\al y^\be)\\
& \quad-T'\left(2x_0 y_{i_1}(\td{\Delta}+\ptl_{x_{n_2}}\ptl_{y_{n_2}}) (x_{n_2} y_{n_2} g_1) \right).   \end{split} \end{align}
Using \eqref{6.5}, we also have
\begin{align} \begin{split}\label{6.22}
\td{E}_{n_2,i_1}(T'(x_{0} y_{n_2} g_1)) &=T'\left((-x_{n_2} x_{i_1} -y_{i_1} \ptl_{y_{n_2}})(  x_{0} y_{n_2} g_1)\right)\\
&=-T'\left( x_{n_2} x_{i_1}  x_{0} y_{n_2} g_1 \right)- (\be_{n_2}+1)T'(x^\al y^\be).
\end{split}\end{align}
Combining \eqref{6.21} and \eqref{6.22} yields
\begin{align} \begin{split} \label{6.23}
& (2\al_{n_2}+3)(\be_{n_2}+1)T'(x^\al y^\be)\\
 = &\td{E}_{0,i_1}(T'(x_{n_2} y_{n_2} g_1))-\td{E}_{n_2,i_1}(T'(x_{0} y_{n_2} g_1))
\\&+ 2 T'\left( x_0 y_{i_1}(\td{\Delta}+\ptl_{x_{n_2}}\ptl_{y_{n_2}}) (x_{n_2} y_{n_2} g_1) \right).
\end{split}\end{align}
Observe that $\mfk d'_1(x_{n_2} y_{n_2} g_1)=\mfk d'_1(x_{0} y_{n_2} g_1)=\mfk d'_1(x^\al y^\be)-1$. If $(\td{\Delta}+\ptl_{x_{n_2}}\ptl_{y_{n_2}}) (x_{n_2} y_{n_2} g_1)=0$, then \eqref{6.23} immediately gives $T'(x^\al y^\be)  \in U_1(\mfk{o}(2n+1))( V_{k-1}')$. We may therefore assume $(\td{\Delta}+\ptl_{x_{n_2}}\ptl_{y_{n_2}}) (x_{n_2} y_{n_2} g_1)\neq 0$.

In view of \eqref{6.23}, it suffices to prove that $T'(h) \in U_1(\mfk{o}(2n+1))( V_{k-1}')$ for every monomial $h$ appearing in $x_0 y_{i_1}(\td{\Delta}+\ptl_{x_{n_2}}\ptl_{y_{n_2}}) (x_{n_2}y_{n_2}g_1)$.
Define a weight function ${\mfk q}' : {\msr A}  \rightarrow \mathbb{N}\cup \{-\infty\}$ (cf. \eqref{2.2}--\eqref{2.3}) by ${\mfk q}' (x^\al y^\be)=    \alpha_{n_2} +  \be_{n_2} $.
Then $ (x_0 y_{i_1}(\td{\Delta}+\ptl_{x_{n_2}}\ptl_{y_{n_2}}) (x_{n_2} y_{n_2} g_1))$ is $\mfk{q}'$-homogeneous of degree ${\mfk q}' (x^\al y^\be) +2$.

For fixed weight $\ell'+k$, every monomial in this case satisfies $\mfk q'\leq\ell'+k-3$, because $x_0y_{i_1}$ already has weight three. Apply descending induction on $\mfk q'$. Each nonzero remainder in \eqref{6.23} has the same weight and $\mfk q'$ increased by two. At the maximal value no such remainder exists. The other two terms in \eqref{6.23} act on $V'_{k-1}$, and the coefficient $(2\alpha_{n_2}+3)(\beta_{n_2}+1)$ is nonzero in characteristic zero. This proves the required inclusion.\pse

(4) Under the conditions $\al_0>0$, $n_1<n_2$, and $\sum_{i=n_2+1}^{n} \al_i>0$, the situation is symmetric to Case (3).\pse

(5) When $ \sum_{i=n_2+1}^n \al_i=\sum_{i=1}^{n_1} \be_i =0$, $\sum_{i= 1}^{n_1}\al_i >0$ and $\sum_{i=n_1+1}^{n_2}\al_i >0$, we write $x^\al y^\be=x_{i_1}x_{i_2} g_1$, where $i_1\in J_1$, $i_2\in J_2$ and $T'(g_1)\in TN'(k-1)$.

By \eqref{6.5}, we have
\begin{align*} \begin{split}
\td{E}_{i_2,i_1}(T'(- g_1))
=&T'\left( (-x_{i_1}x_{i_2}-y_{i_1}\ptl_{y_{i_2}})(- g_1)\right)\\
=&T'(x^\al y^\be)+T'(y_{i_1}\ptl_{y_{i_2}}(g_1)).
\end{split}\end{align*}
If $\ptl_{y_{i_2}}(g_1)=0$, $T'(x^\al y^\be)=\td{E}_{i_2,i_1}(T'(- g_1))$ yields that $T'(x^\al y^\be) \in U_1(\mfk{o}(2n+1))( V_{k-1}')$.
Suppose that $\ptl_{y_{i_2}}(g_1)\neq 0$, then $\mfk d'_1(y_{i_1}\ptl_{y_{i_2}}(g_1))=\mfk d'_1(x^\al y^\be)$. Hence $T'(y_{i_1}\ptl_{y_{i_2}}(g_1))\in TN'(k)$.
By Case (1) and Case (3), we have shown that $T'(y_{i_1}\ptl_{y_{i_2}}(g_1)) \in U_1(\mfk{o}(2n+1))( V_{k-1}')$.
Therefore, $T'(x^\al y^\be) =\td{E}_{i_2,i_1}(T'(- g_1))-T'(y_{i_1}\ptl_{y_{i_2}}(g_1)) \in U_1(\mfk{o}(2n+1))( V_{k-1}')$.\pse

(6) When $ \sum_{i=n_2+1}^n \al_i=\sum_{i=1}^{n_1} \be_i =0$, $\sum_{i=n_2+1}^{n}\be_i >0$ and $\sum_{i=n_1+1}^{n_2}\al_i >0$, we write $x^\al y^\be=y_{i_3}x_{i_2} g_1$, where $i_3\in J_3$, $i_2\in J_2$ and $T'(g_1)\in TN'(k-1)$.

By \eqref{6.5}, we have
\begin{align*} \begin{split}
\td{E}_{i_3,n+i_2}(T'( g_1))
=&T'\left( (y_{i_3}x_{i_2}+ x_{i_3}\ptl_{y_{i_2}})( g_1)\right)\\
=&T'(x^\al y^\be)+T'(x_{i_3}\ptl_{y_{i_2}}(g_1)).
\end{split}\end{align*}
If $\ptl_{y_{i_2}}(g_1)=0$, $T'(x^\al y^\be)=\td{E}_{i_3,n+i_2}(T'(g_1))$ yields that $T'(x^\al y^\be) \in U_1(\mfk{o}(2n+1))( V_{k-1}')$.
Suppose that $\ptl_{y_{i_2}}(g_1)\neq 0$, then $\mfk d'_1(x_{i_3}\ptl_{y_{i_2}}(g_1))=\mfk d'_1(x^\al y^\be)$. Hence $T'(x_{i_3}\ptl_{y_{i_2}}(g_1))\in TN'(k)$.
By Case (2) and Case (4), we have shown that $T'(x_{i_3}\ptl_{y_{i_2}}(g_1)) \in U_1(\mfk{o}(2n+1))( V_{k-1}')$. \pse

(7) When $ \sum_{i=n_2+1}^n \al_i=\sum_{i=1}^{n_1} \be_i =0$, $\sum_{i=  1}^{n_1 }\al_i >0$ and $\sum_{i=n_1+1}^{n_2}\be_i >0$, we write $x^\al y^\be=x_{i_1}y_{i_2} g_1$, where $i_1\in J_1$, $i_2\in J_2$ and $T'(g_1)\in TN'(k-1)$.

By \eqref{6.5}, we have
\begin{align*} \begin{split}
\td{E}_{n+i_1,i_2}(T'(  g_1))&=T'\left( (x_{i_1}y_{i_2} +y_{i_1}\ptl_{x_{i_2}})(g_1) \right)\\
&=T'( x^\al y^\be)+T'(y_{i_1}\ptl_{x_{i_2}} (g_1)).
\end{split}\end{align*}
If $\ptl_{x_{i_2}} (g_1)=0$, $T'(x^\al y^\be)=\td{E}_{n+i_1,i_2}(T'(  g_1))$ yields that $T'(x^\al y^\be) \in U_1(\mfk{o}(2n+1))( V_{k-1}')$.
Suppose that $\ptl_{x_{i_2}} (g_1)\neq 0$, then $\mfk d'_1(y_{i_1} \ptl_{x_{i_2}} (g_1) )=\mfk d'_1(x^\al y^\be)$. Hence $T'(y_{i_1} \ptl_{x_{i_2}} (g_1) )\in TN'(k)$.
By Case (1) and Case (3), we have shown that $T'(y_{i_1}\ptl_{x_{i_2}}(g_1)) \in U_1(\mfk{o}(2n+1))( V_{k-1}')$.
Therefore, $T'(x^\al y^\be) =\td{E}_{n+i_1,i_2}(T'(  g_1))-T'(y_{i_1}\ptl_{x_{i_2}} (g_1))\in U_1(\mfk{o}(2n+1))( V_{k-1}')$.\pse

(8) When $ \sum_{i=n_2+1}^n \al_i=\sum_{i=1}^{n_1} \be_i =0$, $\sum_{i= n_2+ 1}^{n  }\be_i >0$ and $\sum_{i=n_1+1}^{n_2}\be_i >0$, we write $x^\al y^\be=y_{i_2}y_{i_3} g_1$, where $i_2\in J_2$, $i_3\in J_3$ and $T'(g_1)\in TN'(k-1)$.

By \eqref{6.5}, we have
\begin{align*} \begin{split}
\td{E}_{ i_3,i_2}(T'(  g_1))&=T'\left( (y_{i_3}y_{i_2} +x_{i_3}\ptl_{x_{i_2}})(g_1) \right)\\
&=T'( x^\al y^\be)+T'(x_{i_3}\ptl_{x_{i_2}} (g_1)).
\end{split}\end{align*}
If $\ptl_{x_{i_2}} (g_1)=0$, $T'(x^\al y^\be)=\td{E}_{ i_3,i_2}(T'(  g_1))$ yields that $T'(x^\al y^\be) \in U_1(\mfk{o}(2n+1))( V_{k-1}')$.
Suppose that $\ptl_{x_{i_2}} (g_1)\neq 0$, then $\mfk d'_1(x_{i_3} \ptl_{x_{i_2}} (g_1) )=\mfk d'_1(x^\al y^\be)$. Hence $T'(x_{i_3} \ptl_{x_{i_2}} (g_1) )\in TN'(k)$.
By Case (2) and (4), we have shown that $T'(x_{i_3}\ptl_{x_{i_2}}(g_1)) \in U_1(\mfk{o}(2n+1))( V_{k-1}')$.
Therefore, $T'(x^\al y^\be) =\td{E}_{ i_3,i_2}(T'(  g_1))-T'(x_{i_3}\ptl_{x_{i_2}} (g_1))\in U_1(\mfk{o}(2n+1))( V_{k-1}')$.\pse

(9) When $\al_0 > 0$ and $n_1=n_2$, we have $T'(x^\al y^\be)  \in U_1(\mfk{o}(2n+1))( V_{k-1}')$ by Lemma \ref{lem:6.1}.
\psp

(10) The remaining case has $\alpha_0=1$, no variable $y_i$ with $i\in J_1$, no variable $x_j$ with $j\in J_3$, and no middle variable. Write the monomial as $x_0f$, with $f$ in the variables $x_i$ ($i\in J_1$), $y_j$ ($j\in J_3$). Its weight is one, so $k\geq1$ forces $\ell\leq0$ and $k=1$. Since $\deg f=1-\ell>0$, choose a variable dividing $f$. If it is $x_i$, then
\[
T'(x_0f)=x_0f=\widetilde E_{0,i}(-f/x_i),\qquad f/x_i\in V'_0.
\]
The case of a divisor $y_j$ uses $\widetilde E_{0,n+j}$ in the same way.

These cases exhaust the initial-data monomials: if no outer positive-weight variable occurs, the signed degree and $k\geq1$ force either a middle variable together with an outer negative-weight variable, or the case just considered. Thus the cases (5)--(8) and (10) cover that possibility.

In summary, we have $ TN'(k) \subseteq  U_1(\mfk{o}(2n+1))( V_{k-1}')$ for $k\geq 1$. Then $V_k'=M_k$ for integer $k\geq 1$ by Lemma \ref{lem:2.1}.
\end{proof}

\section*{Statements and Declarations}

{\bf Data Availability Statement}
 
This is a theoretical study. No datasets were generated or analysed. All results are established by proofs contained within the article. 

\psp

{\bf Conflict of interest}

The author declares that there are no conflicts of interest regarding the publication of this paper.

\psp

{\bf Funding}

This research did not receive any specific grant from funding agencies in the public, commercial, or not-for-profit sectors.

\psp

{\bf Use of AI-assisted tools}

During the revision, ChatGPT (OpenAI) was used to assist with the review and revision of mathematical arguments, as well as with language and LaTeX editing. The author takes responsibility for the content of the manuscript.

\psp

 \end{document}